\documentclass[final]{amsart}

\usepackage{times}
\usepackage{color}
\usepackage{amsfonts}
\usepackage{graphicx}
\usepackage{hyperref}
\usepackage{verbatim}
\usepackage{subfigure}

\usepackage{amsmath}
\usepackage{amsthm}
\usepackage{amssymb}

\newenvironment{enumerateX}
{\begin{list}{\arabic{enumi})}{
\usecounter{enumi}
\leftmargin 2.5em\topsep 0.5em\itemsep -0.0em\labelwidth 50.0em}}
{\end{list}}

\newenvironment{itemizeX}
{\begin{list}{\labelitemi}{
\leftmargin 2.5em\topsep 0.5em\itemsep -0.0em\labelwidth 50.0em}}
{\end{list}}

\newtheorem{theorem}{Theorem}[section]
\newtheorem{corollary}[theorem]{Corollary}
\newtheorem{lemma}[theorem]{Lemma}

\newtheorem{assumption}[theorem]{Assumption}
\newtheorem{proposition}[theorem]{Proposition}

\newtheorem{example}[theorem]{Example}
\newtheorem{remark}[theorem]{Remark}

\numberwithin{equation}{section}

  \newcounter{mnote}
  \setcounter{mnote}{0}
  
  \let\oldmarginpar\marginpar
    \renewcommand\marginpar[1]{\-\oldmarginpar[\raggedleft\footnotesize #1]%
    {\raggedright\footnotesize #1}}
\newcommand{\beq}{\begin{equation}}
\newcommand{\eeq}{\end{equation}}
\newcommand{\beqa}{\begin{eqnarray}}
\newcommand{\eeqa}{\end{eqnarray}}
%
%

%


%






%


%
     
      
%
\renewcommand{\div}{{\operatorname{div}}}


%

%

%

%

%

%

%

%


%




\newcommand{\N}{{\mathbb N}}       
\newcommand{\PP}{{\mathbb P}}       
\newcommand{\R}{{\mathbb R}}       

\newcommand{\V}{{\mathbb V}}


\newcommand{\cB}{{\mathcal B}}

\newcommand{\cD}{{\mathcal D}}
\newcommand{\cE}{{\mathcal E}}

\newcommand{\cI}{{\mathcal I}}

\newcommand{\cL}{{\mathcal L}}
\newcommand{\cM}{{\mathcal M}}
\newcommand{\cN}{{\mathcal N}}

\newcommand{\cS}{{\mathcal S}}
\newcommand{\cT}{{\mathcal T}}



%


%


\newcommand{\bD}{{\bf D}}

%



%


%


%

%




%

\newcommand{\half}{{\scriptstyle 1/2}}







%
%
\DeclareMathAlphabet{\mathpzc}{OT1}{pzc}{m}{it}



\newcommand{\bit}{\begin{itemize}}
\newcommand{\eit}{\end{itemize}}
\newcommand{\iic}{\emph}

\newcommand{\f}{\frac}




\newcommand{\an}{\text{ and }}

\newcommand{\st}{\text{\ such that }}

\newcommand{\tforall}{\text{ for all }}

\newcommand{\rest}{\big|}

\newcommand{\forma}{a(\, \cdot \, , \, \cdot \,)}












\newcommand{\goto}{\rightarrow}

\newcommand{\lbb}{\llbracket}
\newcommand{\rbb}{\rrbracket}

\newcommand{\norm}[1]{\ensuremath{\lVert{#1} \rVert}}

\newcommand{\nr}[1]{\norm{#1}} 

\newcommand{\nrse}[1]{| \! | \! | {#1} | \! | \! |} 

\newcommand{\pa}{\partial}













\newenvironment{qqq}{\begin{eqnarray*}\begin{split}\end{split}}{\end{eqnarray*}}
\newcommand\bqq{\begin{qqq}}
\newcommand\eqq{\end{qqq}}



\newenvironment{dsub}[2]{  \begin{array}{ccccccccccccccc}{#1} \\ {#2}}{\end{array} }
\newcommand\bml{\begin{dsub}}
\newcommand\eml{\end{dsub}}

\newenvironment{mat}{\left(\begin{array}{ccccccccccccccc}}{\end{array}\right)}
\newcommand\bcm{\begin{mat}}
\newcommand\ecm{\end{mat}}

\newenvironment{rmat}{\left(\begin{array}{rrrrrrrrrrrrr}}{\end{array}\right)}
\newcommand\brm{\begin{rmat}}
\newcommand\erm{\end{rmat}}










%

\usepackage{mathtools}
\usepackage{stmaryrd}


\setlength{\topmargin}{-0.5in}
\setlength{\textheight}{9.5in}
\setlength{\textwidth}{5.85in}
\setlength{\oddsidemargin}{0.325in}
\setlength{\evensidemargin}{0.325in}
\setlength{\marginparwidth}{1.1in}
\setlength{\marginparsep}{0.05in}

\setcounter{tocdepth}{2}

\begin{document}

\title[GOAFEM for Semilinear PDE]{Convergence of Goal-Oriented Adaptive Finite Element Methods  for Semilinear Problems}

\author[M. Holst]{Michael Holst}
\email{mholst@math.ucsd.edu}
\address{Department of Mathematics, 
University of California San Diego,   La Jolla, CA 92093.}

\author[S. Pollock]{Sara Pollock}
\email{snpolloc@math.tamu.edu}
\address{Department of Mathematics, 
Texas A\&M University,   College Station, TX 77843.}

\author[Y. Zhu]{Yunrong Zhu}
\email{zhuyunr@isu.edu}
\address{Department of Mathematics, 
Idaho State University,   Pocatello, ID 83209-8085.}

\thanks{
MH was supported in part by NSF Awards~1065972, 1217175, 1262982, 1318480, 
and by AFOSR Award FA9550-12-1-0046.
SP and YZ were supported in part by NSF Awards~1065972 and 1217175.
YZ was also supported in part by NSF DMS 1319110, and in part by University Research Committee Grant No. F119 at Idaho State University, Pocatello, Idaho.
}

\keywords{
Adaptive finite element methods, goal oriented,
semilinear elliptic problems, quasi-orthogonality,  residual-based error estimator,
convergence, contraction,  {\em a posteriori} estimates
}

\subjclass[2000]{65N30, 65N50, 35J61,65N12,65J15,65N15}

\begin{abstract}
In this article we develop a convergence theory for goal-oriented adaptive 
finite element algorithms designed for a class of second-order 
semilinear elliptic equations.
We briefly discuss the target problem class, and introduce several 
related approximate dual problems that are crucial to both the analysis 
as well as to the development of a practical numerical method.
We then review some standard facts concerning conforming finite element 
discretization and error-estimate-driven adaptive finite element methods
(AFEM).
We include a brief summary of \emph{a priori} estimates for this
class of semilinear problems, and then describe some goal-oriented 
variations of the standard approach to AFEM (GOAFEM).
Following the recent approach of Mommer-Stevenson and Holst-Pollock
for increasingly general linear problems, we first establish a 
quasi-error contraction result for the primal problem.
We then develop some additional estimates that make it possible
to establish contraction of the combined primal-dual quasi-error, 
and subsequently show convergence with respect to the quantity of interest.
Finally, a sequence of numerical experiments are then carefully examined.
It is observed that the behavior of the implementation follows the 
predictions of the theory.
\end{abstract}

\maketitle

\section{Introduction}
\label{sec:intro}

In this article we develop convergence theory for a class of 
goal-oriented adaptive finite element methods for 
second order semilinear equations.
In particular, we establish strong contraction results 
for a method of this type for the problem: 
\begin{equation}
\label{eqn:strong}
\left\{\begin{array}{rl}
	\cN(u):=-\nabla \cdot (A \nabla u) + b(u)  = f, &\mbox{ in }\; \Omega, \\
	u = 0,  &\mbox{ on } \;\partial \Omega,
	\end{array}
\right.
\end{equation}
with $f \in L_2(\Omega)$ and $\Omega \subset \R^d$ ($d=2$ or 3) a polyhedral domain.  
We consider the problem with $A\colon \Omega \goto \mathbb{R}^{d \times d}$  
Lipschitz and symmetric positive definite (SPD).
The standard weak formulation of the primal problem reads:  Find $u \in H^1_0(\Omega)$ such that
\begin{align}
\langle \cN(u), v\rangle:=a(u,v) + \langle b(u),v\rangle &= f(v), \quad \forall v \in H_0^1(\Omega),
\label{primal_problem}
\end{align}
where
\begin{equation}
a(u,v) = \int_{\Omega} A \nabla u \cdot \nabla v ~dx.
\label{primal_forms}
\end{equation}

In many practical applications, one is more interested in certain physically relevant aspects of the solution, referred to as ``quantities of interest'', such as (weighted) averages, flow rates or velocities. These quantities of interest are often characterized by the value $ g(u)$, where $u$ is the solution of \eqref{eqn:strong} and $g$ is a  linear functional associated with a particular ``goal''. Given a numerical approximation $u_{h}$ to the solution $u$, goal-oriented error estimates use duality techniques rather than the energy norm alone  to estimate the error in the quantity of interest . The solution of the dual problem can be interpreted as the generalized Green's function, or the \emph{influence function} with respect to the linear functional, which often quantifies the stability properties of the computed solution. There is a substantial existing literature on developing reliable and accurate \emph{a posteriori} error estimators for goal-oriented adaptivity; see \cite{Eriksson.K;Estep.D;Hansbo.P1995,Becker.R;Rannacher.R1996,Becker.R;Rannacher.R2001,EHM01,Oden.J;Prudhomme.S2001,EHL02,Giles.M;Suli.E2003,Gratsch.T;Bathe.K2005,Korotov.S2006} and the references cited therein.
To our knowledge, the results presented here are the first to show convergence in the sense of the goal function for the class of semilinear elliptic problems discussed below.   We support our theory with a numerical comparison of our method with standard goal-oriented adaptive strategies, demonstrating comparable efficiency with the added benefit of provable contraction for this problem class.

Our focus in this paper is on developing a goal-oriented adaptive algorithm for semilinear
problems \eqref{primal_problem} along with a corresponding strong contraction result, following 
the recent approach in~\cite{MoSt09,HoPo11a} for linear problems. One of the main challenges 
in the nonlinear problem that we do not see in the linear case is the dependence of the dual problem on the primal solution $u$. As it is not practical to work with a dual problem we cannot accurately form, we  develop a method for semilinear problems in which adaptive 
mesh refinement is driven both by residual-based approximation to the error 
in $u$, and by a sequence of \emph{approximate dual problems} which only depend on the numerical solution from the previous step. While globally reducing the error in the primal problem necessarily yields 
a good approximation to the goal error  $g(u- u_{h})$, methods of the type we describe 
here bias the error reduction in the direction of the goal-function $g$ 
in the interest of achieving an accurate approximation in fewer 
adaptive iterations.

Contraction of the adaptive finite element algorithm for the (primal) semilinear problem \eqref{primal_problem} has been established in~\cite{HTZ09a} and~\cite{Holst.M;McCammon.J;Yu.Z;Zhou.Y2012}.  Here we recall the contraction argument for the primal problem and use a generalization of this technique to establish the contraction of a linear combination of the primal and limiting dual quasi-errors by means of a computable sequence of approximate dual problems.  We relate this result to a bound on the error in the quantity of interest. Following~\cite{HTZ09a}, the contraction argument follows from first establishing three preliminary results for two successive AFEM approximations $u_1$ and $u_2$, and respectively $\hat z_1 \an \hat z_2$ of the primal and limiting dual problems (see Section~\ref{sec:setup} for detailed definitions). 
\begin{enumerateX}
\item Quasi-orthogonality: There exists $ \Lambda_G > 1$ such that 
\[
\nrse{u - u_2}^2 \le  \Lambda_G \nrse{u - u_1}^2 - \nrse{u_2 - u_1}^2.
\]

\item Error estimator as upper bound on error: There exists $C_1 > 0$ such that
\[
\nrse{u - u_k}^2 \le C_1 \eta_k^2(u_k, \cT_k), \quad k = 1,2.
\]
\item Estimator reduction: For $\cM$ the marked set that takes refinement $\cT_1 \goto \cT_2$, for positive constants $\lambda < 1 \an \Lambda_1$ and any $\delta > 0$ 
\[
\eta_2^2(v_2,\cT_2) \le (1 + \delta) \{ \eta_1^2(v_1 , \cT_1) - \lambda \eta_1^2(v_1, \cM) \} + 
(1 + \delta^{-1}) \Lambda_1 \eta_0^2 \nrse{v_2 - v_1}.
\]
\end{enumerateX}
For the primal problem, the mesh at each iteration may be marked for refinement with respect to the error indicators following the D\"orfler marking strategy (cf. \cite{Dorfler.W1996}).  In the case of the dual problem, the limiting estimator as used in the contraction argument is related to a computable quantity.  This quantity is the dual estimator, based on the residual of the approximate dual sequence.  The mesh is marked for refinement with respect to this set of error indicators, which correspond to the approximate dual problem at each iteration.  The transformation between limiting and approximate dual estimators couples the contraction of error in the limiting dual to the primal problem.  The final result is the contraction of what we refer to here as the \emph{combined quasi-error}
\[
\bar Q^2(u_j, \hat z_j)  \coloneqq \nrse{\hat z - \hat z_j}^2  + \gamma \zeta_2^2(\hat z_j) + \pi\nrse{u - u_j}^2 + \pi \gamma_p \eta_2^2(u_j), 
\]
which is
the sum of the quasi-error as in~\cite{CKNS08} for the limiting dual problem and a multiple of the quasi-error for the primal problem.  The contraction of this property as shown in Theorem~\ref{quasi_combine_C} establishes the contraction of the error in the goal function as shown in Corollary~\ref{goalErrorBound}.

Our analysis is based on the recent development in the contraction framework for semilinear 
and more general nonlinear problems in~\cite{HTZ09a,Holst.M;McCammon.J;Yu.Z;Zhou.Y2012,Holst.M;Szypowski.R;Zhu.Y2013},
and those for linear problems developed by
Cascon, Kreuzer, Nochetto and Siebert~\cite{CKNS08}, 
and by Nochetto, Siebert, and Veeser~\cite{NSV09}.
In addressing the goal-oriented problem we base our framework on that of 
Mommer and Stevenson~\cite{MoSt09} for symmetric linear problems and 
Holst and Pollock~\cite{HoPo11a} for nonsymmetric problems.
We note also two other recent convergence results in the literature for 
goal-oriented adaptive methods applied to self-adjoint linear
problems, namely~\cite{DKV06} and~\cite{MSST06},
both providing convergence rates in agreement with those in~\cite{MoSt09}.  

The analysis of the goal-oriented method for nonlinear problems is
significantly more complex than the previous analysis for linear
problems in~\cite{MoSt09,HoPo11a}.   We follow a marking strategy similar to the one discussed 
in~\cite{HoPo11a}; in particular, we mark for both primal and dual problems and take
the union of the two as our marked set for the next refinement.  This strategy differs from that in~\cite{MoSt09} in which they choose the set of lesser cardinality and use this to develop a quasi-optimal complexity result for solving Poisson's equation.   Due to the increased complexity of the problems we consider here, we show convergence with respect to the quasi-error as opposed to the energy error and as such mark for both primal and dual sets as the error estimator is not guaranteed to decrease monotonically for the dual problem if the mesh is only marked for the primal (and vice-versa).  While we do not develop theoretical complexity results for this method, we demonstrate it efficiency numerically and see that it compares well with the method of~\cite{MoSt09} as well as the dual weighted residual (DWR) method.
The analysis further departs from that in~\cite{HoPo11a} as here we are faced with analyzing linearized and approximate dual sequences 
as opposed to a single dual problem in order to establish contraction with 
respect to the quantity of interest.
The approach presented here allows us to establish a contraction result for the 
goal-oriented method, which appears to be the first result of this type 
for nonlinear problems. 

{\bf\em Outline of the paper.}
The remainder of the paper is structured as follows.
In \S\ref{sec:setup},
we introduce the approximate, linearized and limiting dual problems.  We briefly discuss
the problem class and review some standard
facts concerning conforming finite element discretization and
error-estimate-driven adaptive finite element methods (AFEM).  
In \S\ref{subsec:fem} we include a brief summary of \emph{a priori} estimates for the semilinear problem.
In \S\ref{sec:goafem}, we describe a goal-oriented variation of the standard
approach to AFEM (GOAFEM).
In \S\ref{sec:primalC} we discuss contraction theorems for the primal problem.  
In \S\ref{sec:contraction_thms} we introduce additional estimates necessary for the contraction of the combined quasi-error  and convergence in the sense of the quantity of interest.
Lastly, in \S\ref{sec:num} we present some numerical experiments
that support our theoretical results.

\section{Preliminaries}
   \label{sec:setup}
In this section, we state both the (nonlinear) primal problem and its finite element discretization. We then introduce the linearized dual problem, and consider some variants of this problem which are of use in the subsequent computation and analysis.

Consider the semilinear problem~\eqref{primal_problem}, 
where as in~\eqref{primal_forms} we define the bilinear form 
\[
a(u, v) = ( A \nabla u , \nabla v ),
\]
with
$( \cdot, \cdot )$ denoting the $L_2$ inner-product over 
$\Omega \subset \R^d$.  We make the following assumptions on the data:
\begin{assumption}[Problem data]\label{data_assumptions}
The problem data $\bD= (A,b,f)$ satisfies 
\begin{enumerateX}
\item $A: \Omega \goto \R^{d \times d}$ is Lipschitz continuous and symmetric positive-definite with
\begin{align*}
\inf_{x \in \Omega} \lambda_{\text{min}}(A(x)) &= \mu_0 > 0,\\
\sup_{x \in \Omega} \lambda_{\text{max}}(A(x)) &= \mu_1 < \infty.
\end{align*}
\item $b: \Omega \times \R \goto \R$ is smooth on the second argument. Here and in the remainder of the paper, we write $b(u)$ instead of $b(x,u)$ for simplicity. Moreover, we assume that $b$ is monotone (increasing):
\[
b'(\xi) \ge 0, ~\tforall \xi \in \R.
\] 
 
\item $f\in L_2(\Omega)$.
\end{enumerateX}
\end{assumption}

The native norm is the Sobolev $H^1$ norm given by $\nr v_{H^1}^2 =  ( \nabla v, \nabla v ) + ( v,v ).$ Continuity of $\forma$ follows from the H\"older inequality, 
\begin{align}\label{continuity} 
a(u, v)  \le  \mu_1  |u|_{H^1}  |v|_{H^1}
 = M_\cE \nr{u}_{H^1} \nr v_{H^1} \quad (\mbox{ with } M_\cE = \mu_{1}).
\end{align}
Define the energy semi-norm by the principal part of the differential operator
$
\nrse v^2 \coloneqq a(v,v).
$
The coercivity of $\forma$ follows from the Poincar\'e inequality with constant $C_\Omega$
\begin{align}\label{coercive}
a(v,v) & \ge \mu_0|v|_{H^1}^2 
 \ge C_\Omega \mu_0 \nr v_{H^1}^2 = m_{\cE}^2\nr v_{H^1}^2,
\end{align}
which establishes the energy semi-norm as a norm.
Putting this together with~\eqref{continuity}
establishes the equivalence between the native and energy norms.

\subsection{Linearized dual problems}
\label{subsec:linearized_dual}
Given a linear functional $g\in L_{2}(\Omega)$, the objective in goal-oriented error estimation is to relate the residual to the error in the quantity of interest. This involves solving a dual problem whose solution $z$ satisfies the relation $g(u - u_h) = \langle R(u_h), z \rangle$. In the linear case, the appropriate dual problem is the formal adjoint of the primal  (cf.~\cite{MeNo05,HoPo11a}). For $b$ nonlinear, the primal problem \eqref{primal_problem} does not have an exact formal adjoint.  In this case we obtain the dual by linearization.  

Formally, given a numerical approximation $u_{j}$ to the exact solution $u$, the residual is given by 
$$
	R(u_{j}):= f- \cN(u_{j}) = \cN(u) - \cN(u_{j}).
$$
If $z^{j}\in H_{0}^{1}(\Omega)$ solves the following linearized dual problem 
\begin{align}\label{linearized_dual}
a(z^j,v) + \langle \cB_j z^j,v\rangle &=  g(v), \quad \forall v \in H_0^1(\Omega),
\end{align}
where $g(v):=\int_{\Omega} gv dx$ and the operator $\cB_j$ is given by
\begin{equation}\label{dual_op}
\cB_j \coloneqq \int_0^1 b'(\xi u + (1 - \xi) u_j ) ~d \xi = \int_0^1 b'(u_j + (u - u_j)\xi) ~d\xi,
\end{equation}
then the goal-oriented error $g(e_j)$ of $e_{j} = u - u_{j}$ can be represented exactly by the inner product of $z^{j}$ and $R(u_{j})$: 
$$
	g(e_{j}) = \langle  R(u_{j}), z^{j} \rangle.
$$
In fact, by definition of the residual $R(u_{j})$, we have 
\begin{eqnarray*}
	\langle R(u_{j}), z^{j}\rangle = a(z^{j}, e_{j}) + \langle z^{j}, b(u) - b(u_{j})\rangle = a(z^{j}, e_{j}) + \langle \cB_j z^j, e_j \rangle = g(e_{j}).
\end{eqnarray*}
Here we used the integral Taylor identity:
$$b(u)-b(u_{j}) = \int_0^1 b'(u_j + (u - u_j)\xi) ~d\xi (u-u_{j}) =\cB_{j} (u-u_{j}).$$
The derivation and numerical use of the linearized dual problem is further discussed in~\cite{ELW00,EHM01,Holst03}.

Unfortunately, the dual problem \eqref{linearized_dual} is computationally useful because the operator $\cB_{j}$ depends on the exact solution $u$. In order to define a computable dual operator, we introduce the approximate operator $b'(u_j)$, which lead to the following approximate dual problem: 
Find $\hat z^j \in H^1_0(\Omega)$ such that
\begin{align}\label{Eapprox_dual}
a(\hat z^j,v) + \langle b'(u_j) \hat z^j,v\rangle &= g(v), \quad \forall v \in H_0^1(\Omega).
\end{align} 
The equation \eqref{Eapprox_dual} is instrumental for defining a computable \emph{a posteriori} error indicator for the dual problem.  

A further difficulty arises in the analysis of the goal-oriented adaptive algorithm driven by the \emph{a posteriori} error estimators for the approximate dual problem \eqref{Eapprox_dual}. Due to the dependence on $u_{j}$, \eqref{Eapprox_dual} changes at each step of the adaptive algorithm. This is one of the essential differences of the nonlinear problem as compared to the linear cases in the previous literature (cf. \cite{MoSt09,HoPo11a}). To handle this obstacle, we introduce the limiting  dual problem: 
Find $\hat z \in H^1_0(\Omega)$ such that
\begin{align}\label{limiting_dual_problem}
a(\hat z,v) + \langle b'(u) \hat z ,v\rangle &=g(v), \quad \forall v \in H_0^1(\Omega).
\end{align}
While the operator $b'(u)$ is a function of the exact solution $u$ and is not a computable quantity, it is the operator used in the limit of both the linearized dual \eqref{linearized_dual} and approximate dual problems \eqref{Eapprox_dual} as $u_j \goto u$.  Therefore, both the linearized and approximate sequences approach the same limiting problem \eqref{limiting_dual_problem}. Our contraction result in Theorem~\ref{quasi_combine_C} is written with respect to the limiting dual problem as defined by the operator $b'(u)$.


\subsection{Finite Element Approximation}\label{subsec:fem}
For a given conforming, shape-regular triangulation $\cT$ of $\Omega$ consisting of closed simplices $T\in \cT$, we define the finite element space
\begin{equation}\label{Vtau}
\V_\cT \coloneqq H_0^1(\Omega) \cap \prod_{T \in \cT} \PP_n(T) \quad \an \V_k \coloneqq \V_{\cT_k},
\end{equation}
where $\PP_n(T)$ is the space of polynomials degree $\le n$ over $T$.
For any subset $\cS \subseteq \cT$,
\begin{equation}\label{Vtau_omega}
\V_\cT(\cS)  \coloneqq H_0^1(\Omega) \cap \hspace{-4pt} \prod_{T \in \cS} \hspace{-4pt} \PP_n(T).
\end{equation}

Given a triangulation $\cT$, we denote $h_{\cT}:= \max_{T\in \cT} h_{T}$ where $h_{T} := |T|^{1/d}.$ In particular, we denote $h_{0}:= h_{\cT_{0}}$ for an initial (conforming, shape-regular) triangulation $\cT_{0}$ of $\Omega$. Then the adaptive algorithm discussed below generates a nested sequence of conforming refinements $\{\cT_{k}\}$, with $\cT_{k}\ge \cT_{j}$ for $k\ge j$ meaning that  $\cT_{k}$ is  a conforming triangulation of $\Omega$ based on certain refinements of $\cT_{j}$. With this notation, we also simply denote by $\V_{k}:= \V_{\cT_{k}}$ the finite element space defined on $\cT_{k}$. 

The finite element approximation of the primal problem \eqref{primal_problem} reads: Find $u_k \in \V_k \st$
\begin{equation}\label{discrete_primal}
a(u_k, v_k) + \langle b(u_k), v_k \rangle= f(v_k), ~ v_k \in \V_k,
\end{equation}
and the finite element approximation of \eqref{Eapprox_dual} linearized about $u_j$ is given by: Find $\hat z^j_k \in \V_k $ such that
\begin{equation}\label{approx_dual_problem}
a(\hat z^j_k,v_k) + \langle b'(u_j) \hat z^j_k, v_k \rangle  = g(v_k) \quad \tforall v_k \in \V_k.
\end{equation}
Finally, for the purpose of analysis, we introduce the discrete limiting dual problem (cf. \eqref{limiting_dual_problem}) given by: Find $\hat z_k  \in \V_k $ such that
\begin{equation}\label{Ldiscrete_dual_problem}
a(\hat z_k,v_k) + \langle b'(u) \hat z_k, v_k \rangle  = g(v_k) \quad \tforall v_k \in \V_k.
\end{equation}

Existence and uniqueness of solutions to the primal problems~\eqref{primal_problem} and~\eqref{discrete_primal} follow from standard variational or fixed-point arguments as in~\cite{St00} and~\cite{Ke89}.  For the dual problems~\eqref{Eapprox_dual}-\eqref{limiting_dual_problem} and~\eqref{approx_dual_problem}-\eqref{Ldiscrete_dual_problem}  the existence and uniqueness of solutions follow from the standard Lax-Milgram Theorem as in~\cite{GT77}, since we assumed that $b'(\xi) \ge 0.$

We make the following assumption on the a priori $L_{\infty}$ bounds of the solutions to the primal problems~\eqref{primal_problem} and~\eqref{discrete_primal}:
\begin{assumption}[\emph{A priori bounds}]\label{a:apriori} Let $u$  and $u_{k}$ be the solution to~\eqref{primal_problem} and~\eqref{discrete_primal},respectively. We assume that there are $u_-, u_+ \in L_\infty$ which satisfy
\begin{equation}\label{discreteLinfBound}
u_-(x) \le u(x), u_k(x) \le u_+(x) ~\text{for almost every } x \in \Omega.
\end{equation}
\end{assumption}
\begin{remark}
\label{rk:Linfty}
The $L_\infty$ bound on $u$ follows from the standard maximum principle, as discussed in~\cite[Theorem 2.4]{BHSZ11a} and~\cite[Theorem 2.3]{Holst.M;Szypowski.R;Zhu.Y2013a}.  There is a significant literature on $L_\infty$ bounds for the discrete solution, usually requiring additional angle conditions on the triangulation (cf. \cite{Kerkhoven.T;Jerome.J1990,Jungel.A;Unterreiter.A2005,Karatson.J;Korotov.S2005,Holst.M;Szypowski.R;Zhu.Y2013a} and the references cited therein).  On the other hand, if $b$ satisfies the (sub)critical growth condition, as stated in~\cite[Assumption (A4)]{BHSZ11a}, then the $L_\infty$ bounds on the discrete solution $u_k$ are satisfied without angle conditions on the mesh; see~\cite{BHSZ11a} for more detail.
\end{remark}

Assumption~\ref{data_assumptions} together with Assumption~\ref{a:apriori} yield the following properties on the continuous and discrete solutions as summarized below.
\begin{proposition}\label{assumptionsonb} Let the problem data satisfy Assumption~\ref{data_assumptions} and Assumption~\ref{a:apriori}. The following properties hold:
\begin{enumerateX}
\item $b$ is Lipschitz  on $[u_-,u_+]\cap H_0^1(\Omega)$ for a.e. $x \in \Omega$ with constant $B$.
\item $b'$ is Lipschitz on $[u_-,u_+]\cap H_0^1(\Omega)$ for a.e. $x \in \Omega$ with constant $\Theta$. 
\item Let $\hat z$  bet the solution to~\eqref{limiting_dual_problem}, $\hat z_j^j$  the solution to~\eqref{approx_dual_problem}  and $\hat z_j$  the solution to~\eqref{Ldiscrete_dual_problem}. Then there are $z_-, z_+ \in L_\infty$ which satisfy
\begin{equation}\label{dualLinfBound}
z_-(x) < \hat z(x), \hat z_j(x), \hat z_j^j(x)  \le z_+(x) ~\text{for almost every } x \in \Omega, ~ j \in \N.
\end{equation}
\end{enumerateX}
\end{proposition}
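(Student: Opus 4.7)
The plan is to handle claims (1) and (2) by elementary calculus on a compact interval, and to reduce claim (3) to standard $L_\infty$ regularity for linear elliptic equations. I expect (1) and (2) to be straightforward; (3) is the main obstacle, since it requires a bound uniform in $j$ for both continuous and discrete dual solutions.

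For (1) and (2), since $u_\pm \in L_\infty(\Omega)$, the interval $I := [\essinf u_-,\, \essup u_+]$ is a bounded closed subset of $\R$. By Assumption~\ref{data_assumptions}(2), $b$ is smooth in its second argument, so both $b'$ and $b''$ are continuous on $\R$ and attain their suprema on $I$. Setting
\[
B := \sup_{\xi \in I} |b'(\xi)|, \qquad \Theta := \sup_{\xi \in I} |b''(\xi)|,
\]
the mean value theorem applied to $b$ and to $b'$ yields $|b(s)-b(t)| \le B|s-t|$ and $|b'(s)-b'(t)| \le \Theta|s-t|$ for all $s,t \in I$. Since \eqref{discreteLinfBound} places the pointwise values of any $v \in [u_-,u_+]\cap H^1_0(\Omega)$ inside $I$ for a.e.\ $x \in \Omega$, these Lipschitz bounds transfer pointwise almost everywhere.

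For (3), each of~\eqref{limiting_dual_problem}, \eqref{Eapprox_dual}, \eqref{approx_dual_problem}, and \eqref{Ldiscrete_dual_problem} is a linear elliptic equation whose zeroth-order coefficient is either $b'(u)$ or $b'(u_j)$. By part (1), Assumption~\ref{a:apriori}, and the nonnegativity in Assumption~\ref{data_assumptions}(2), this coefficient satisfies $0 \le b'(u),\, b'(u_j) \le B$ almost everywhere, uniformly in $j$. Combined with the uniform ellipticity of $A$ from Assumption~\ref{data_assumptions}(1), the associated bilinear forms are coercive and continuous on $H^1_0(\Omega)$ with constants independent of $j$; moreover, on the bounded domain $\Omega$ and for $d \in \{2,3\}$, the right-hand side $g\in L_2(\Omega)$ lies in $L_p(\Omega)$ for some $p > d/2$. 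A standard Stampacchia truncation argument applied to~\eqref{limiting_dual_problem} and \eqref{Eapprox_dual} (see, e.g.,~\cite{GT77}) then yields a bound $\|\hat z\|_{L_\infty(\Omega)},\,\|\hat z_j^j\|_{L_\infty(\Omega)} \le C\|g\|_{L_2(\Omega)}$ with $C$ depending only on $\mu_0$, $B$, $d$, and $|\Omega|$; a symmetric argument applied to $-\hat z$ and $-\hat z_j^j$ furnishes the corresponding lower bound.

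The remaining (and main) obstacle is passing this estimate to the discrete dual solutions $\hat z_j$ and $\hat z_j^j$ uniformly in $j$. My plan is to invoke either a discrete maximum principle under an acute-angle assumption on the refinement family $\{\cT_k\}$, or, in line with Remark~\ref{rk:Linfty} and~\cite{BHSZ11a}, the subcritical growth condition on $b$ to obtain the discrete $L_\infty$ estimate without geometric restrictions on the mesh. Either route produces a bound of the form $\|\hat z_j\|_{L_\infty(\Omega)},\,\|\hat z_j^j\|_{L_\infty(\Omega)} \le C'\|g\|_{L_2(\Omega)}$ independent of $j$. Taking $z_+$ to be any constant strictly above $\max(C,C')\|g\|_{L_2(\Omega)}$ and $z_-$ strictly below its negative produces elements of $L_\infty(\Omega)$ satisfying~\eqref{dualLinfBound} simultaneously for $\hat z$ and for every $\hat z_j,\hat z_j^j$, $j \in \N$.
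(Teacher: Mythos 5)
Your parts (1) and (2), and the continuous half of part (3), are correct and essentially the best one can do: the compactness-plus-smoothness argument giving $B=\sup_I|b'|$ and $\Theta=\sup_I|b''|$ on $I=[{\rm ess\,inf}\,u_-,{\rm ess\,sup}\,u_+]$, and the Stampacchia/De~Giorgi $L_\infty$ estimate for \eqref{limiting_dual_problem} (uniform in $j$ for the approximate dual operators because $0\le b'(u_j)\le B$ a.e.\ by Assumption~\ref{a:apriori}), are exactly the standard ingredients. Note that the paper itself states Proposition~\ref{assumptionsonb} without any proof, so there is no ``paper approach'' to compare against beyond the pointer in Remark~\ref{rk:Linfty}. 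One small caveat: since $b=b(x,\xi)$ and smoothness is assumed only in the second argument, uniformity of $B$ and $\Theta$ in $x$ tacitly requires boundedness of $b'$ and $b''$ on $\Omega\times I$; you gloss over this exactly as the paper does.

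The genuine gap is the one you flag yourself and then leave as a plan: the uniform-in-$j$ $L_\infty$ bounds for the \emph{discrete} solutions $\hat z_j$ of \eqref{Ldiscrete_dual_problem} and $\hat z_j^j$ of \eqref{approx_dual_problem}. Neither of your two proposed routes is carried out, and neither is available off the shelf under the stated hypotheses. A discrete maximum principle needs nonobtuseness/acuteness conditions on every mesh $\cT_k$, which is an extra hypothesis the paper does not make and which is not guaranteed to be preserved by newest vertex bisection for a general initial mesh (especially in 3D); the alternative route via the growth condition of \cite{BHSZ11a} is formulated for the discrete \emph{primal semilinear} problem, so its transfer to the linear discrete dual problems with coefficient $b'(u_j)$ on non-quasi-uniform adaptive meshes would have to be verified rather than invoked --- uniform $L_\infty$ stability of Galerkin solutions on adaptively refined meshes is itself nontrivial. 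To be fair, the paper does not prove this either: item (3) functions de facto as an assumption parallel to Assumption~\ref{a:apriori}, with the discrete $L_\infty$ issue discussed only for the primal problem in Remark~\ref{rk:Linfty}. But judged as a self-contained proof, your write-up stops exactly where the real difficulty begins, so the discrete part of \eqref{dualLinfBound} remains unproven.
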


\section{Goal Oriented AFEM}   \label{sec:goafem}
In this section, we describe the goal oriented adaptive finite element method (GOAFEM), which is based on the 
standard AFEM algorithm:
\begin{equation}\label{goafem00}
\text{ SOLVE } \rightarrow \text{ ESTIMATE } \rightarrow \text{ MARK } \rightarrow \text{ REFINE }.
\end{equation} 
Below, we explain each procedure. 

{\noindent\bf\em Procedure SOLVE.}   The procedure SOLVE involves solving~\eqref{discrete_primal} for  $u_j$, computing  $b'(u_j)$  to form problem \eqref{approx_dual_problem} and solving~\eqref{approx_dual_problem} for  $\hat z_j^j$. In the analysis that follows, we assume for simplicity that the exact Galerkin solution is found on each mesh refinement. In practice the nonlinear problem~\eqref{discrete_primal} may be solved by a standard inexact Newton + multilevel algorithm as in~\cite{BHSZ11b}. The approximate dual problem~\eqref{approx_dual_problem} may be solved by any standard linear-time iterative method.

{\noindent\bf\em Procedure ESTIMATE.}
We use a standard residual-based element-wise error estimator for both primal and approximate dual problems. Recall that the residual of the primal problem is given by $R(v) = f - \cN(v)$ with $\cN(v) = -\nabla \cdot (A \nabla v) + b(v)$.
For the limiting and approximate dual problems, we define the local strong form by
$
\hat \cL^\ast (v) \coloneqq -\nabla \cdot (A \nabla v) + b'(u)(v), ~\an~
\hat \cL_j^\ast (v) \coloneqq -\nabla \cdot (A \nabla v) + b'(u_j)(v).
$ 
The limiting and approximate dual residuals given respectively by
\begin{equation}\label{dualresi} 
R^\ast(v) \coloneqq g - \hat \cL^\ast(v), ~\an~ \hat R_j^\ast(v) \coloneqq  g - \hat \cL_j^\ast(v).
\end{equation}
The {\em jump residual} for both the primal and linearized dual problems is:
$$
J_T(v) \coloneqq \lbb [A \nabla v] \cdot n \rbb_{\pa T},
$$
where $\lbb ~\cdot~ \rbb$ is given by
$
\lbb \phi \rbb_{\pa T} \coloneqq \lim_{t \goto 0} \phi(x + t n) - \phi(x - tn)
$
 and $n$ is taken to be the appropriate outward normal defined on $\pa T$.
The error indicator for the primal problem \eqref{discrete_primal} is given by
\begin{equation}\label{eta_cmpct}
\eta_\cT^2(v,T) \coloneqq h_T^2 \nr{R(v)}_{L_2(T)}^2 +  h_T \nr{ J_T(v)  }_{L_2(\pa T)}^2,   \quad v \in \V_\cT.
\end{equation}
Similarly, the dual error-indicator is given by the approximate residual
\begin{equation}\label{zeta_cmpct}
\zeta_{\cT,j}^2(w,T) \coloneqq h_T^2 \nr{\hat R_j^\ast( w)}_{L_2(T)}^2 + 
h_T \nr{  J_T( w)   }_{L_2(\pa T)}^2, \quad w \in \V_\cT.
\end{equation}
This dual indicator is defined in terms of the approximate dual operator $b'(u_j)$ as this is a computable quantity given an approximation $u_j$. In addition, for purpose of analysis we define the limiting dual error-indicator by
\begin{equation}\label{limitingInd}
\zeta_{\cT}^2(w,T) \coloneqq h_T^2 \nr{\hat R^\ast( w)}_{L_2(T)}^2 + 
h_T \nr{  J_T( w)   }_{L_2(\pa T)}^2, \quad w \in \V_\cT.
\end{equation}
We remark that the limiting dual indicator as given by~\eqref{limitingInd} is not computable. For any given subset $\cS\subset \cT$, the error estimators on $\cS$ are given by the $l_2$ sum of error indicators over elements in the space.
$$
\eta_\cT^2(v, \cS)   \coloneqq    \sum_{T \in \cS} \eta_{\cT}^2(v,T) , \quad v \in \V_\cT.
$$
The dual energy estimator is:
$$
\zeta_{\cT,j}^2(w,\cS) \coloneqq  \sum_{T \in \cS} \zeta_{\cT,j}^2(w,T), \quad w \in \V_\cT,
$$
and the limiting estimator
$$
\zeta_{\cT}^2(w,\cS) \coloneqq  \sum_{T \in \cS} \zeta_{\cT}^2(w,T), \quad w \in \V_\cT.
$$
To simplify the notation, below we will omit ``$\cS$'' in the above definitions if $\cS = \cT$ and we will use $\eta_{k}$ to denote $\eta_{\cT_{k}}$, and similarly use $\zeta_{k,\cdot}$ to denote $\zeta_{\cT_{k}, \cdot}$.

As in~\cite{CKNS08} it is not difficult to verify that the indicators for the primal and approximate (respectively limiting) dual problems satisfy the monotonicity property for $v \in \V(\cT_1)$ and  $\cT_2 \ge \cT_1$
\begin{equation}\label{indicator_mono}
\eta_2(v,\cT_2) \le \eta_1(v, \cT_1), ~\zeta_{2,j}(v,\cT_2) \le \zeta_{1,j}(v,\cT_1) 
~\an~ \zeta_{2}(v,\cT_2) \le \zeta_{1}(v,\cT_1)  .
\end{equation}
For an element $T \in \cT_2 \cap \cT_1$
\begin{equation}\label{indicator_local}
\eta_2(v,T) = \eta_1(v, T),  ~\zeta_{2,j}(v,T) = \zeta_{1,j}(v,T)
~\an~ \zeta_{2}(v,T) = \zeta_{1}(v,T).
\end{equation}

{\noindent\bf\em Procedure MARK.}
The D\"orfler  marking strategy for the goal-oriented problem is based on the following steps as in~\cite{MoSt09}:  
\begin{enumerateX}
\item
Given $\theta \in (0,1)$, mark sets for each of the primal and dual problems:
\begin{itemizeX}
\item Mark a set $\cM_p \subset \cT_k$ such that
\begin{equation}\label{markP}
 \eta_k^2(u_k,\cM_{p}) \ge \theta^2 \eta_k^2(u_k, \cT_k).
\end{equation}
\item Mark a set $\cM_d \subset \cT_k$ such that
\begin{equation}\label{markD}
 \zeta_{k,k}^2(\hat z_k^k,\cM_{d}) \ge \theta^2 \zeta_{k,k}^2(\hat z_k^k, \cT_k).
\end{equation}
\end{itemizeX}
\item
Let $ \cM = \cM_p \cup \cM_d$  be  the union of  sets found for the primal and dual problems respectively.  
\end{enumerateX}
As in ~\cite{HoPo11a} the set $\cM$ differs from that in~\cite{MoSt09}, where the set of lesser cardinality between $\cM_p \an \cM_d$ is used.  We emphasize the necessity of this choice to obtain strong contraction both in terms of the primal problem and the combined primal-dual system.  For Poisson's equation investigated in~\cite{MoSt09}, the contracting quantity is the energy error, whereas here we develop contraction arguments for the quasi-error which combines the energy error with the error estimator.  As the sequence of estimators for the primal (dual) problem based on the latest solution at each iteration is not necessarily monotone decreasing unless the primal (dual) problem has been refined for, we refine for both primal and dual problems at each iteration in order to force convergence of the quasi-error for both primal and dual problems.   As seen in~\eqref{markD} the mesh is marked with respect to the dual indicators of the approximate-sequence solutions $\hat z_k^k$ as these are computable quantities.   
Sets $\cM_p \an \cM_d$ with optimal cardinality (up to a factor of 2) can be chosen in linear time~ by binning the elements rather than performing a full sort~\cite{MoSt09}.

{\noindent\bf\em Procedure REFINE.}
The refinement (including the completion) is performed according to newest vertex bisection which was first proposed in \cite{Sewell.E1972}. It has been proved that the bisection procedure will preserve the shape-regularity of the initial triangulation $\cT_{0}$.  The complexity and other properties of this procedure are now well-understood (see for example~\cite{BDD04} and the references cited therein), and will simply be exploited here.

\section{Contraction for the primal problem}
   \label{sec:primalC}

In this section, we discuss the contraction of the primal problem~\eqref{primal_problem}, recalling results from~\cite{HTZ09a}, \cite{Holst.M;Szypowski.R;Zhu.Y2013a} and~\cite{BHSZ11b}. The contraction argument relies on  three main convergence results, namely quasi-orthogonality, error-estimator as upper bound on error and estimator reduction. We include the analogous results here for the limiting dual problem when they are identical or nearly identical.
\subsection{Quasi-orthogonality }\label{subsec:quasi_orthog}

Orthogonality in the energy-norm $\nrse{u - u_{2}}^2= \nrse{u - u_1}^2 - \nrse{u_{2} - u_1}^2$ does not generally hold in the semilinear problem.
We rely on the weaker quasi-orthogonality result to establish contraction of AFEM (GOAFEM).
The proof of the quasi-orthogonality relies on the following $L_{2}$-lifting property. 
\begin{lemma}[$L_2$-lifting]\label{L2lifting}
Let the problem data satisfy Assumption~\ref{data_assumptions} and Assumption~\ref{a:apriori}. Let $u$ be the exact solution to~\eqref{primal_problem}, and $u_1 \in \V_1$ the Galerkin solution to~\eqref{discrete_primal}. Let $w\in H^{1+s}(\Omega) \cap H_0^1(\Omega)$ for some $0 < s \le 1$ be the solution to the dual problem: Find $w\in H_{0}^{1}(\Omega)$ such that
\begin{equation}\label{duality_dual}
a(w,v) + \langle \cB_1 w, v \rangle= \langle  u - u_1, v \rangle, \quad v \in H_0^1(\Omega),
\end{equation}
where the operator $\cB_{1}$ is defined by $\cB_1 \coloneqq  \int_0^1 b'(\xi u + (1 - \xi) u_1 ) ~d \xi$.
As in \cite{Ci02, Evans98, AOB01} we assume the regularity
\begin{equation}\label{elliptic_reg0A}
|w|_{H^{1+s}(\Omega)} \le K_R \nr{u-u_{1}}_{L_2(\Omega)}
\end{equation}
based on the continuity of the coefficients $a_{ij}$ and of $b'(\cdot)$ .
Then
\begin{align}\label{duality_res0A}
\nr{u - u_1}_{L_2} & \le C_\ast h_0^s \nrse{u - u_1}.
\end{align}
\end{lemma}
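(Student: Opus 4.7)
The plan is to execute a standard Aubin--Nitsche duality argument, adapted to the semilinear setting by means of the integrated secant $\cB_1$. First I would test the dual equation \pref{duality_dual} against $v = u - u_1 \in H_0^1(\Omega)$, obtaining
\[
\|u - u_1\|_{L_2}^2 \;=\; a(w, u-u_1) + \langle \cB_1 w,\, u - u_1\rangle.
\]
Because $A$ is symmetric and $\cB_1$ acts as multiplication by an $L_\infty$ function, both forms are symmetric in their two arguments, so the right-hand side equals $a(u-u_1, w) + \langle \cB_1(u-u_1), w\rangle$.

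Next I would derive a Galerkin-type orthogonality in the form of $\cB_1$: subtracting \pref{discrete_primal} from \pref{primal_problem} tested against an arbitrary $v_1 \in \V_1$ gives $a(u-u_1,v_1) + \langle b(u)-b(u_1), v_1\rangle = 0$, and the integral Taylor identity $b(u)-b(u_1) = \cB_1(u-u_1)$ turns this into
\[
a(u-u_1, v_1) + \langle \cB_1(u-u_1), v_1\rangle = 0 \qquad \forall\, v_1 \in \V_1.
\]
Choosing $v_1 = I_1 w \in \V_1$ the Scott--Zhang (or Cl\'ement) interpolant of $w$ and subtracting from the identity above yields
\[
\|u - u_1\|_{L_2}^2 \;=\; a(u-u_1,\, w - I_1 w) + \langle \cB_1(u-u_1),\, w - I_1 w\rangle.
\]

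Now I would bound the two right-hand terms. For the principal-part piece, continuity \pref{continuity} together with the standard interpolation estimate $\|w - I_1 w\|_{H^1} \le C\, h_0^s\, |w|_{H^{1+s}}$ gives $|a(u-u_1, w-I_1 w)| \le C\, h_0^s\, \|u-u_1\|_{H^1}\, |w|_{H^{1+s}}$. For the reaction piece, Proposition~\ref{assumptionsonb} tells us $\cB_1 \in L_\infty(\Omega)$ is uniformly bounded (by the Lipschitz bound $B$), so with $\|w - I_1 w\|_{L_2} \le C\, h_0^{1+s}\, |w|_{H^{1+s}}$ we get $|\langle \cB_1(u-u_1), w-I_1 w\rangle| \le C\, h_0^{1+s}\, \|u-u_1\|_{L_2}\, |w|_{H^{1+s}}$. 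Combining these and invoking the regularity assumption \pref{elliptic_reg0A} to replace $|w|_{H^{1+s}}$ by $K_R\|u-u_1\|_{L_2}$, one power of $\|u-u_1\|_{L_2}$ cancels, giving
\[
\|u-u_1\|_{L_2} \;\le\; C\,h_0^s\, \|u-u_1\|_{H^1} + C\, h_0^{1+s}\, \|u-u_1\|_{L_2}.
\]
Absorbing the second term on the left for $h_0$ small (guaranteed by the initial-mesh assumption, or by incorporating the factor into $C_\ast$) and passing from $\|\cdot\|_{H^1}$ to $\nrse{\cdot}$ via the norm equivalence coming from \pref{coercive} delivers the desired bound $\|u - u_1\|_{L_2} \le C_\ast h_0^s \nrse{u - u_1}$.

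The main subtlety is not conceptual but rather bookkeeping of the nonlinear term: the linear Aubin--Nitsche trick works verbatim once $b(u) - b(u_1)$ has been rewritten as $\cB_1(u-u_1)$, and the key inputs are (i) the uniform $L_\infty$ bound on $\cB_1$, which follows from Assumption~\ref{a:apriori} together with the smoothness/Lipschitz property of $b'$ in Proposition~\ref{assumptionsonb}, and (ii) the symmetry of the linearized operator, which lets the dual test function play the role of the primal error and vice versa. The hypothesis \pref{elliptic_reg0A} packages the only genuinely nontrivial analytic ingredient (elliptic regularity for the linearized operator), so no further PDE regularity theory needs to be reproved here.
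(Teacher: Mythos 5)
Your proposal follows essentially the same Aubin--Nitsche duality argument as the paper: test the dual problem \eqref{duality_dual} with $u-u_1$, use Galerkin orthogonality together with the integral Taylor identity $b(u)-b(u_1)=\cB_1(u-u_1)$, insert a quasi-interpolant of $w$, bound the principal part by continuity plus the $H^1$ interpolation estimate and the reaction part by the $L_\infty$ bound on $\cB_1$ plus the $L_2$ interpolation estimate, and close with the regularity assumption \eqref{elliptic_reg0A}. The only deviation is your final step: absorbing the term $C\,h_0^{1+s}\|u-u_1\|_{L_2}$ by a kickback tacitly requires $h_0$ small, a hypothesis the lemma does not state, whereas the paper instead applies coercivity \eqref{coercive} in that term to bound $\|u-u_1\|_{L_2}\le m_\cE^{-1}\nrse{u-u_1}$, so the conclusion holds for any $h_0$ with $C_\ast = m_\cE^{-1}K_R\left(M_\cE C_\cI + B\hat C_\cI h_0\right)$ and no absorption is needed.
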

\begin{proof} The proof follows the standard duality arguments in~\cite{AOB01}, \cite{HoPo11a} and~\cite{BS08}, adapted for the semilinear problem. Let $\cI^h: H_{0}^{1}(\Omega) \to \V_{1}$ be a quasi-interpolator, satisfying
\begin{align}\label{interpolation_estA}
\nr{w - \cI^h w}_{H^1} &\le C_\cI h_{\cT_1}^s |w|_{H^{1+s}} \\
\label{interpolation_estB}
\nr{w - \cI^h w}_{L_2} &\le \hat C_\cI h_{\cT_1}^{1+s} |w|_{H^{1+s}}.
\end{align}
as discussed in~\cite{AOB01}, \cite{SF73} and~\cite{HoPo11a}.

Consider the linearized dual problem~\eqref{duality_dual} with $v = u - u_1 \in H_0^1(\Omega)$
expressed in primal form
\begin{equation}\label{lift2}
a(u- u_1,w) + \langle \cB_1(u - u_1), w \rangle = \nr{u - u_1}_{L_2}^2.
\end{equation}
By Galerkin orthogonality, for $\cI^hw \in \V_1$
\begin{equation}\label{lift3}
a(u - u_1, \cI^hw) + \langle \cB_1(u - u_1), \cI^h w\rangle = 0.
\end{equation}
Subtracting~\eqref{lift3} from~\eqref{lift2}
\begin{equation}
a(u- u_1,w- \cI^hw) + \langle b(u) -b(u_1), w - \cI^hw\rangle = \nr{u - u_1}_{L_2}^2.
\end{equation}
Then by~\eqref{continuity} continuity of $\forma$, the H\"older inequality and Lipschitz continuity of $b$ (Proposition \ref{assumptionsonb}):
\begin{align}\label{lift4}
\nr{u - u_1}_{L_2}^2 & \le M_\cE\nr{u - u_1}_{H^1}\nr{w - \cI^hw}_{H^1} + B \nr{u - u_1}_{L_2}\nr{w - \cI^hw}_{L_2}.
\end{align}
By coercivity~\eqref{coercive}, interpolation estimate~\eqref{interpolation_estA}, and regularity~\eqref{elliptic_reg0A} on the first term on the RHS of~\eqref{lift4}
\begin{align}\label{lift5}
M_\cE\nr{u - u_1}_{H^1}\nr{w - \cI^hw}_{H^1} & \le \f{M_\cE}{m_\cE} C_\cI h_0^s \nrse{u - u_1}|w|_{H^{1+s}}
\nonumber \\
& \le  \f{M_\cE}{m_\cE} K_R C_\cI h_0^s \nrse{u - u_1}\nr{u - u_1}_{L_2}.
\end{align}
For the second term of~\eqref{lift4}, apply~\eqref{interpolation_estB} followed by~\eqref{elliptic_reg0A} and coercivity to the interpolation error yielding
\begin{align}\label{lift6}
B \nr{u - u_1}_{L_2}\nr{w - \cI^hw}_{L_2} & \le B \hat C_\cI h_0^{1+s} \nr{u - u_1}_{L_2}|w|_{H^{1+s}}
\nonumber \\
& \le   K_R B\hat C_\cI  h_0^{1+s} \nr{u - u_1}_{L_2}\nr{u - u_1}_{L_2} 
\nonumber \\
& \le  (m_\cE^{-1} K_R B\hat C_\cI h_0)h_0^s \nr{u - u_1}_{L_2}\nrse{u - u_1}.
\end{align}
Applying~\eqref{lift5} and~\eqref{lift6} to~\eqref{lift4}, we obtain
\begin{align}\label{lift7}
\nr{u - u_1}_{L_2} & \le m_\cE^{-1}K_R \left(M_\cE  C_\cI  +  B\hat C_\cI h_0 \right)h_0^s \nrse{u - u_1}.
\end{align}
This completes the proof.
\end{proof}

Similarly, we have the following $L_{2}$-lifting result for two Galerkin solutions.
\begin{corollary}\label{TheOtherL2Lift}
Let the assumptions in Lemma~\ref{L2lifting} hold. Let $u_1 \in \V_1$ and $u_2 \in \V_2$ be the Galerkin solutions to~\eqref{discrete_primal} in the spaces $\V_{1} \subset \V_{2}$, respectively. 
Then there is a constant $C_{\ast}>0$ such that 
\begin{equation}\label{TCL2}
\nr{u_2 - u_1}_{L_2} \le C_\ast h_0^s \nrse{u_2 - u_1}.
\end{equation}
\end{corollary}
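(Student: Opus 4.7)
The plan is to essentially mimic the proof of Lemma~\ref{L2lifting} verbatim, replacing the continuous solution $u$ by the fine-grid Galerkin solution $u_2$ throughout, and exploiting the inclusion $\V_1 \subset \V_2$ to recover the Galerkin orthogonality on the coarse space $\V_1$.

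First, I would introduce the appropriate linearization operator between $u_1$ and $u_2$, namely
\[
\tilde{\cB} := \int_0^1 b'(\xi u_2 + (1-\xi) u_1)\,d\xi,
\]
which by the integral Taylor identity satisfies $b(u_2) - b(u_1) = \tilde{\cB}(u_2 - u_1)$. Note that Proposition~\ref{assumptionsonb} guarantees $b'$ is bounded on the interval $[u_-,u_+]$ containing $u_1$ and $u_2$, so $\tilde{\cB}$ is a nonnegative bounded coefficient, and the corresponding dual problem inherits the same regularity structure used in Lemma~\ref{L2lifting}.

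Next, I would set up the dual problem: find $w \in H^{1+s}(\Omega) \cap H_0^1(\Omega)$ such that
\[
a(w,v) + \langle \tilde{\cB} w, v\rangle = \langle u_2 - u_1, v\rangle \quad \forall v \in H_0^1(\Omega),
\]
with the regularity estimate $|w|_{H^{1+s}} \le K_R \nr{u_2 - u_1}_{L_2}$ analogous to \eqref{elliptic_reg0A}. Testing with $v = u_2 - u_1 \in H_0^1(\Omega)$ and using the Taylor identity yields
\[
a(u_2 - u_1, w) + \langle b(u_2) - b(u_1), w \rangle = \nr{u_2 - u_1}_{L_2}^2.
\]
The key observation, replacing the role of continuous Galerkin orthogonality, is that since $\V_1 \subset \V_2$, both $u_1$ and $u_2$ satisfy \eqref{discrete_primal} for every test function $v_1 \in \V_1$. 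Subtracting these two identities gives the discrete Galerkin orthogonality
\[
a(u_2 - u_1, v_1) + \langle b(u_2) - b(u_1), v_1\rangle = 0 \quad \forall v_1 \in \V_1.
\]

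Applying the quasi-interpolator $\cI^h : H_0^1(\Omega) \to \V_1$ with the estimates \eqref{interpolation_estA}--\eqref{interpolation_estB} and subtracting, I would obtain
\[
\nr{u_2 - u_1}_{L_2}^2 = a(u_2 - u_1, w - \cI^h w) + \langle b(u_2) - b(u_1), w - \cI^h w\rangle.
\]
From here the remaining estimates follow exactly the lines of \eqref{lift4}--\eqref{lift7}: continuity \eqref{continuity} on the bilinear term, coercivity \eqref{coercive} to pass between $H^1$ and the energy seminorm, interpolation estimate \eqref{interpolation_estA} on the energy part and \eqref{interpolation_estB} on the $L_2$ part, and Lipschitz continuity of $b$ (from Proposition~\ref{assumptionsonb}) for the nonlinear remainder. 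Combining everything yields
\[
\nr{u_2 - u_1}_{L_2} \le m_\cE^{-1} K_R \bigl( M_\cE C_\cI + B \hat C_\cI h_0\bigr) h_0^s \nrse{u_2 - u_1},
\]
so one may take $C_\ast$ equal to the constant appearing in Lemma~\ref{L2lifting}. The only subtle point in the argument is justifying the regularity \eqref{elliptic_reg0A} for the new dual problem, but this is inherited from the Lipschitz continuity of the coefficients $a_{ij}$ together with the bound $0 \le \tilde{\cB} \le \sup_{[u_-,u_+]} b'$, which is exactly the same hypothesis used for $\cB_1$ in Lemma~\ref{L2lifting}; no new ingredient is required.
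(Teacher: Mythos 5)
Your proposal is correct and follows essentially the same route as the paper: the paper's own proof likewise replaces $u$ by $u_2$ in Lemma~\ref{L2lifting} and uses the auxiliary dual problem with the secant operator $\cB_{12}=\int_0^1 b'(\xi u_2+(1-\xi)u_1)\,d\xi$, with discrete Galerkin orthogonality on $\V_1\subset\V_2$ playing the role you describe. The only (cosmetic) difference is that the paper states the auxiliary dual problem over $\V_2$ while you pose it over $H_0^1(\Omega)$; your continuous formulation is, if anything, the cleaner choice, since the $H^{1+s}$ regularity bound is invoked for its solution $w$.
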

\begin{proof}
The proof of~\eqref{TCL2} follows by replacing $u$ by $u_2$ in Lemma~\ref{L2lifting}.  In this case, we should replace the dual problem~\eqref{duality_dual} by: Find $w \in \V_2$ such that
\begin{equation}\label{duality_dual_second}
a(w,v) + \langle \cB_{12} w, v \rangle= ( u_2 - u_1, v ), \quad v \in \V_2,
\end{equation}
where the operator $\cB_{12} \coloneqq  \int_0^1 b'(\xi u_2 + (1 - \xi) u_1 ) ~d \xi$.
The rest of the proof is the same as Lemma~\ref{L2lifting}.
\end{proof}
\begin{remark}
As the dual problem~\eqref{duality_dual} changes at each iteration, so may the regularity constant as given by~\eqref{elliptic_reg0A} as well as the interpolation constants as given by~\eqref{interpolation_estA} and~\eqref{interpolation_estB}. As such, the previous lemma shows a $C_{\ast,k}$ for $k = 1, 2, \ldots$.  As the algorithm is run finitely many times, we consolidate these $C_{\ast,k}$ into a single constant $C_\ast$ for simplicity of presentation.
\end{remark}

Now we are in position to show the quasi-orthogonality.
\begin{lemma}[Quasi-orthogonality]\label{primal_quasi}
Let the problem data satisfy Assumption~\ref{data_assumptions} and Assumption~\ref{a:apriori}. Let $\cT_1, \cT_2$ be two conforming triangulation of $\Omega$ with $\cT_2 \ge \cT_1$. Let $u\in H_0^1(\Omega)$ be the exact solution to~\eqref{primal_problem}, $u_i \in \V_i$ the solution to~\eqref{discrete_primal}, $i = 1,2$.
There exists a constant $C_\ast > 0$ depending on the problem data $\bD$ and initial mesh $\cT_0$, and a number $0 < s \le 1$ related to the angles of $\pa \Omega$, such that if the meshsize $h_0$ of the initial mesh satisfies $\bar \Lambda \coloneqq  Bm_\cE^{-1}C_\ast  h_0^{s} < 1$, then
\begin{equation}\label{quasi_ortho}
\nrse{u - u_{2}}^2 \le \Lambda \nrse{u - \bar v}^2 - \nrse{u_{2} - \bar v}^2, \quad \forall \bar v \in \V_{2},
\end{equation}
and in particular for $\bar v = u_1\in \V_{1}\subset \V_{2}$
\begin{equation}\label{galerkin_qo}
\nrse{u - u_{2}}^2 \le \Lambda_G \nrse{u - u_1}^2 - \nrse{u_{2} - u_1}^2,
\end{equation}
where
\[
\Lambda \coloneqq (1 - B m_\cE^{-1}C_\ast  h_0^{s})^{-1} ~\an~ \Lambda_G \coloneqq (1 - BC_\ast^2 h_0^{2s})^{-1} 
\]
and $C_\ast$ is the constant from Lemma~\ref{L2lifting}.
\end{lemma}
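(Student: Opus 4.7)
The plan is to imitate the standard quasi-orthogonality argument for semilinear AFEM: start from the algebraic identity obtained by expanding $\nrse{u-\bar v}^2$ with the midpoint $u_2$, then kill the cross term using the weak formulations and the $L_2$-lifting results that were just established in Lemma~\ref{L2lifting} and Corollary~\ref{TheOtherL2Lift}. Since the bilinear form $\forma$ is only the principal part of the nonlinear operator $\cN$, true energy orthogonality fails; the whole point is to show that the discrepancy is of order $h_0^s$, which is why the small-mesh hypothesis $\bar\Lambda<1$ enters.

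First, I would write the identity
\[
\nrse{u-u_2}^2 \;=\; \nrse{u-\bar v}^2 \;-\; \nrse{u_2-\bar v}^2 \;-\; 2\,a(u-u_2,\,u_2-\bar v),
\]
valid for any $\bar v\in H^1_0(\Omega)$ by bilinearity and symmetry of $a(\cdot,\cdot)$. Next, since $\bar v\in\V_2$ gives $u_2-\bar v\in\V_2$, I would subtract the discrete equation~\eqref{discrete_primal} for $u_2$ from the continuous equation~\eqref{primal_problem} tested against $u_2-\bar v$ to obtain the perturbed Galerkin relation
\[
a(u-u_2,\,u_2-\bar v) \;=\; -\,\langle b(u)-b(u_2),\,u_2-\bar v\rangle.
\]
Hence the cross term is purely the nonlinear defect. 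Bounding it via Cauchy--Schwarz in $L_2$ and the Lipschitz bound on $b$ (Proposition~\ref{assumptionsonb}(1)) yields
\[
|a(u-u_2,u_2-\bar v)| \;\le\; B\,\|u-u_2\|_{L_2}\,\|u_2-\bar v\|_{L_2}.
\]

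Now I would apply Lemma~\ref{L2lifting} (with the role of $u_1$ played by $u_2$) to absorb $\|u-u_2\|_{L_2}\le C_\ast h_0^s\nrse{u-u_2}$, and use the crude bound $\|u_2-\bar v\|_{L_2}\le \|u_2-\bar v\|_{H^1}\le m_\cE^{-1}\nrse{u_2-\bar v}$ from coercivity~\eqref{coercive}. This produces
\[
2\,|a(u-u_2,u_2-\bar v)| \;\le\; 2\bar\Lambda\,\nrse{u-u_2}\,\nrse{u_2-\bar v},
\qquad \bar\Lambda = B m_\cE^{-1} C_\ast h_0^s.
\]
Applying Young's inequality with weight $1$ and substituting back into the opening identity gives $(1-\bar\Lambda)\nrse{u-u_2}^2 \le \nrse{u-\bar v}^2 - (1-\bar\Lambda)\nrse{u_2-\bar v}^2$, and dividing by $1-\bar\Lambda>0$ (using the small-mesh hypothesis) produces~\eqref{quasi_ortho} with $\Lambda=(1-\bar\Lambda)^{-1}$.

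For the sharper Galerkin statement~\eqref{galerkin_qo} with $\bar v=u_1\in\V_1\subset\V_2$, the only change is that the second factor $\|u_2-u_1\|_{L_2}$ can now be controlled by Corollary~\ref{TheOtherL2Lift} rather than by Poincar\'e, giving $\|u_2-u_1\|_{L_2}\le C_\ast h_0^s\nrse{u_2-u_1}$. The same Young's inequality step then produces $\bar\Lambda_G = BC_\ast^2 h_0^{2s}$ in place of $\bar\Lambda$, and dividing through yields $\Lambda_G=(1-BC_\ast^2 h_0^{2s})^{-1}$ as claimed. The main obstacle is really conceptual rather than technical: the nonlinearity forbids exact Galerkin orthogonality, and one must verify that the defect $\langle b(u)-b(u_2),u_2-\bar v\rangle$ is dominated by a factor strictly less than one times $\nrse{u-u_2}^2+\nrse{u_2-\bar v}^2$, which is exactly what the $h_0^s$-smallness delivered by the $L_2$-lifting guarantees provided $h_0$ is taken small enough that $\bar\Lambda<1$.
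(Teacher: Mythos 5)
Your proposal is correct and follows essentially the same route as the paper's own proof: the identity expanding $\nrse{u-\bar v}^2$ about $u_2$, the perturbed Galerkin relation reducing the cross term to the nonlinear defect $\langle b(u)-b(u_2),u_2-\bar v\rangle$, the Lipschitz bound on $b$, $L_2$-lifting on the first factor with coercivity (resp.\ Corollary~\ref{TheOtherL2Lift}) on the second, and Young's inequality with absorption under the small-mesh hypothesis. The constants $\Lambda$ and $\Lambda_G$ you obtain match the paper's exactly.
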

\begin{proof}
For any given $\bar v \in \V_{2}$, we have 
\begin{align} \label{QP1} 
\nrse{u - u_2}^2 
& = \nrse{u - \bar v}^2 - \nrse{\bar v - u_2}^2 + 2a(u - u_2, \bar v - u_2). 
\end{align}
By Galerkin orthogonality
\begin{equation}\label{QP2}
a(u - u_2, v) + \langle b(u) - b(u_2), v \rangle = 0 \tforall v \in \V_2,
\end{equation}
and taking $v = \bar v - u_2$ in~\eqref{QP2}, we have 
\begin{align}\label{QP3}
2a(u - u_2, \bar v - u_2) & \le 2 |\langle b(u) - b(u_2), \bar v - u_2\rangle| \nonumber \\
& \le 2B \nr{u - u_2}_{L_2} \nr{\bar v - u_2}_{L_2}.
\end{align}
Here we used H\"older inequality and the Lipschitz property on $b$ (cf. Proposition~\ref{assumptionsonb}).

To prove the inequality~\eqref{quasi_ortho}, by applying the $L_2$-lifting Lemma~\ref{L2lifting}  to the first factor on the RHS and the coercivity~\eqref{coercive} to the second followed by Young's inequality, we obtain
\begin{align}\label{QP4a}
 2B \nr{u - u_2}_{L_2} \nr{\bar v - u_2}_{L_2} & \le 2B m_\cE^{-1}C_\ast  h_0^{s}\nrse{u - u_2}\nrse{\bar v - u_2} \nonumber \\
 & \le B m_\cE^{-1}C_\ast  h_0^{s}\nrse{u - u_2}^2 + B m_\cE^{-1}C_\ast h_0^{s}\nrse{\bar v - u_2}^2.
\end{align}
Applying~\eqref{QP4a} via~\eqref{QP3} to~\eqref{QP1}
\[
(1 - B m_\cE^{-1}C_\ast  h_0^{s})\nrse{u - u_2}^2 \le \nrse{u - \bar v}^2 - (1 - B m_\cE^{-1}C_\ast  h_0^{s})\nrse{\bar v - u_2}^2.
\]
Assuming 
$
\bar \Lambda \coloneqq  Bm_\cE^{-1}C_\ast  h_0^{s} < 1,
$ 
we have
\begin{equation}
\nrse{u - u_2}^2 \le \Lambda\nrse{u - \bar v}^2 - \nrse{\bar v - u_2}^2
\end{equation}
with $\Lambda = (1 - Bm_\cE^{-1}C_\ast  h_0^{s})^{-1}$.

The proof of the inequality~\eqref{galerkin_qo} is almost identical. By applying $L_2$-lifting~\ref{L2lifting} to each norm on the RHS of~\eqref{QP3} by means of Corollary~\ref{TheOtherL2Lift} then applying Young's inequality
\begin{align}\label{QP4}
 2B \nr{u - u_2}_{L_2} \nr{u_1 - u_2}_{L_2} & \le 2B h_0^{2s}C_\ast^2 \nrse{u - u_2}\nrse{u_1 - u_2} \nonumber \\
 & \le B h_0^{2s}C_\ast^2 \nrse{u - u_2}^2 + BC_\ast^2 h_0^{2s} \nrse{u_1 - u_2}^2.
\end{align}
Following the same procedure as above yields
\begin{equation}\label{QP4G}
\nrse{u - u_2}^2 \le \Lambda_G\nrse{u - u_1}^2 - \nrse{u_1- u_2}^2
\end{equation}
with $\Lambda_G = (1 - BC_\ast^2 h_0^{2s})^{-1}$ with the weaker mesh assumption $\bar \Lambda_G \coloneqq BC_\ast^2h_0^{2s}<1$.
\end{proof}

We note that the second Galerkin orthogonality estimate~\eqref{QP4G} sharpens our results but is not essential to establishing them.
\subsection{Error Estimator as Global Upper-bound}\label{subsec:upper}
The second key result for the contraction of the primal problem is the error estimator as a global upper bound on the energy error, up to a global constant.  The result for the semilinear problem is established in \cite{HTZ09a,Holst.M;McCammon.J;Yu.Z;Zhou.Y2012} with a clear generalization to the approximate dual sequence, also see~\cite{CKNS08} and~\cite{MeNo05} for the linear cases. The proof of this result follows from the general \emph{a posteriori} error estimation framework developed in \cite{Verfurth.R1994,Verfurth.R1996}.
\begin{lemma}[Error estimator as global upper-bound]\label{primal_upper}
Let the problem data satisfy Assumption~\ref{data_assumptions} and Assumption~\ref{a:apriori}. Let $\cT_{k}$ be a conforming refinement of $\cT_{0}$.  Let $u\in H_{0}^{1}(\Omega)$ and $u_k \in \V_k$ be the solutions to~\eqref{primal_problem} and~\eqref{discrete_primal}, respectively.  Similarly, let  $\hat z \in H_{0}^{1}(\Omega)$ and $\hat z_k \in \V_k$ be the solutions to~\eqref{limiting_dual_problem} and~\eqref{Ldiscrete_dual_problem}, respectively.
Then there is a global constant $C_1$ depending only on the problem data $\bD$ and initial mesh $\cT_0$ such that
\begin{equation}\label{estimator_ge_error}
\nrse{u - u_k} \le C_1 \eta_k(u_k)
\end{equation}
and
\begin{equation}\label{dual_upper}
\nrse{\hat z - \hat z_k}  \le C_1 \zeta_{k}(\hat z_k).
\end{equation}
\end{lemma}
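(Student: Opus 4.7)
The plan is to adapt the standard residual-based a posteriori upper bound framework of Verf\"urth~\cite{Verfurth.R1994,Verfurth.R1996} to the semilinear setting, following~\cite{HTZ09a,Holst.M;McCammon.J;Yu.Z;Zhou.Y2012}. I would prove~\eqref{estimator_ge_error} first; then~\eqref{dual_upper} follows as a simpler instance of the same argument, since the limiting dual form $a(\cdot,\cdot)+\langle b'(u)\,\cdot,\,\cdot\rangle$ is linear and coercive (because $b'(u)\ge 0$), so the usual linear a posteriori analysis applies directly once the strong residual is identified as $\hat R^\ast(\hat z_k)=g-\hat\cL^\ast(\hat z_k)$.

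For~\eqref{estimator_ge_error}, set $e:=u-u_k$. Monotonicity of $b$ (Assumption~\ref{data_assumptions}) gives $\langle b(u)-b(u_k),\,e\rangle\ge 0$, so, using the weak form~\eqref{primal_problem},
\begin{equation*}
\nrse{e}^2 \;=\; a(e,e) \;\le\; a(e,e)+\langle b(u)-b(u_k),\,e\rangle \;=\; (f,e)-a(u_k,e)-\langle b(u_k),e\rangle.
\end{equation*}
Galerkin orthogonality from~\eqref{discrete_primal} allows $e$ on the right to be replaced by $e-\cI_k e$ for any quasi-interpolant $\cI_k\colon H_0^1(\Omega)\to\V_k$ of Scott--Zhang or Cl\'ement type. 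Element-wise integration by parts in $a(u_k,\,e-\cI_k e)$ then rewrites the right-hand side as a sum of interior contributions of the strong residual $R(u_k)=f+\nabla\cdot(A\nabla u_k)-b(u_k)$ and interelement boundary contributions of the jump $J_T(u_k)$.

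Applying Cauchy--Schwarz together with the standard quasi-interpolation estimates
\begin{equation*}
\|e-\cI_k e\|_{L_2(T)}\le C_\cI\,h_T\,\|e\|_{H^1(\omega_T)}, \qquad \|e-\cI_k e\|_{L_2(\partial T)}\le C_\cI\,h_T^{1/2}\,\|e\|_{H^1(\omega_T)},
\end{equation*}
summing over $T\in\cT_k$, using the finite overlap of patches $\omega_T$, and invoking the norm equivalence~\eqref{coercive}--\eqref{continuity}, one obtains $\nrse{e}^2\le C_1\,\eta_k(u_k)\,\nrse{e}$; cancellation of one factor of $\nrse{e}$ yields~\eqref{estimator_ge_error}. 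The bound~\eqref{dual_upper} follows by the same template applied to~\eqref{limiting_dual_problem}--\eqref{Ldiscrete_dual_problem} with $\hat e:=\hat z-\hat z_k$: no preliminary monotonicity step is needed, integration by parts produces $\hat R^\ast(\hat z_k)$ together with $J_T(\hat z_k)$, and one concludes $\nrse{\hat z-\hat z_k}^2\le C_1\,\zeta_k(\hat z_k)\,\nrse{\hat z-\hat z_k}$.

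The main obstacle --- and essentially the only place where the semilinear structure intervenes in this lemma --- is the nuisance term $\langle b(u)-b(u_k),\,e\rangle$, which is \emph{not} removed by Galerkin orthogonality. The key observation is that this term is nonnegative by monotonicity, so it can be absorbed into $a(e,e)$ ``for free'' on the upper side; no Lipschitz bound on $b$ or $b'$ is required for this step, and $C_1$ depends only on the problem data $\bD$ (through $A$ and the coercivity/continuity constants) and on the shape-regularity of $\cT_0$ (through $C_\cI$), as asserted.
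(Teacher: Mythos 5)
Your argument is correct and is essentially the proof the paper relies on: the paper does not prove this lemma in-text but cites the standard residual-based \emph{a posteriori} framework (Verf\"urth; \cite{HTZ09a,CKNS08}), which is exactly what you reproduce --- absorbing the semilinear term via monotonicity, then Galerkin orthogonality, element-wise integration by parts, and quasi-interpolation estimates, with the linear coercive limiting dual handled by the same template. Your observation that only monotonicity of $b$ (not its Lipschitz constants) enters $C_1$ is consistent with the paper's claim that $C_1$ depends only on $\bD$ and $\cT_0$.
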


\subsection{Estimator Reduction}\label{subsec:estim_reduc}
The local Lipschitz property as in~\cite{HTZ09a}, analogous to the local perturbation property established in~\cite{CKNS08}, is a key step in establishing  estimator reduction leading to the contraction  result.  For any $T\in \cT$,  we denote
\begin{equation}\label{def:patch}
\omega_T \coloneqq T \cup \{ T' \in \cT ~\rest~ T \cap T' \text{ is a true-hyperface of } T \}.
\end{equation}
Here, for a $d$-simplex $T$, a true-hyperface is a $d-1$ sub-simplex of $T$, e.g., a face in 3D or an edge in 2D. We also define the data estimator on each element $T \in \cT$ as 
\begin{equation}\label{data_est_element_NL}
\eta_\cT^2(\bD,T) = h_T^2 \left( \nr{\div A}_{L_\infty(T)}^2 +  h_T^{-2 } \nr {A}_{L_\infty(\omega_T)}^2 + B^2 \right),
\end{equation}
 and denote  
$
\eta_\cT (\bD,\cS) = \max_{T \in \cS} \eta_{\cT}(\bD,T)
$
for any subset $\cS\subseteq \cT$. Recall that $B$ is the Lipschitz constant in Proposition~~\ref{assumptionsonb}. In particular, we denote by $
\eta_0 \coloneqq \eta_{\cT_0} (\bD,\cT_0) $ the data estimator on the initial mesh.  
As the grid is refined, the data estimator  satisfies the monotonicity property for refinements $\cT_2 \ge \cT_1$ (cf.~\cite{CKNS08}): 
\begin{align}\label{mono_data}
\eta_2(\bD,\cT_2) \le \eta_1(\bD,\cT_1).
\end{align}
\begin{lemma}[Local Lipschitz Property]\label{primalLL}
Let the problem data satisfy Assumption~\ref{data_assumptions} and Assumption~\ref{a:apriori}. Let $\cT$ be a conforming refinement of $\cT_{0}$. Then for all $T \in \cT$ and for any  $v, w \in \V_\cT$
\begin{align}\label{perturb_est}
|\eta_\cT(v,T) -  \eta_\cT(w,T)| \le \bar \Lambda_1 \eta_\cT(\bD,T)\nr{v - w}_{H^1(\omega_T)}.
\end{align}
 The constant $\bar \Lambda_1> 0$ depends on the dimension $d$ and the initial mesh $\cT_0$.
\end{lemma}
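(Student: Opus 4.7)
The plan is to follow the standard local perturbation argument (as in~\cite{CKNS08}, adapted for the semilinear case as in~\cite{HTZ09a}), splitting the estimator into its volume residual and jump parts and bounding each difference separately. First, viewing $\eta_\cT(v,T)$ as the Euclidean norm of the pair $(h_T\|R(v)\|_{L_2(T)},\, h_T^{1/2}\|J_T(v)\|_{L_2(\pa T)})$, the reverse triangle inequality in $\mathbb{R}^2$ gives
\[
|\eta_\cT(v,T) - \eta_\cT(w,T)|^2 \le h_T^2\|R(v)-R(w)\|_{L_2(T)}^2 + h_T\|J_T(v)-J_T(w)\|_{L_2(\pa T)}^2.
\]
So it suffices to estimate each summand on the right.

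For the residual part, we expand
\[
R(v) - R(w) = \nabla\cdot(A\nabla(v-w)) - (b(v)-b(w)) = (\div A)\cdot\nabla(v-w) + A:\nabla^2(v-w) - (b(v)-b(w)),
\]
where $\div A$ denotes the row-wise divergence of $A$. The first two terms are bounded pointwise by $\|\div A\|_{L_\infty(T)}|\nabla(v-w)|$ and $\|A\|_{L_\infty(T)}|\nabla^2(v-w)|$. Applying the standard inverse estimate $\|\nabla^2(v-w)\|_{L_2(T)} \lesssim h_T^{-1}\|\nabla(v-w)\|_{L_2(T)}$ valid for piecewise polynomials of degree $\le n$ on a shape-regular mesh, and multiplying through by $h_T$, the elliptic part contributes something of the form
\[
\bigl(h_T^2\|\div A\|_{L_\infty(T)}^2 + C\|A\|_{L_\infty(T)}^2\bigr)^{1/2}\|\nabla(v-w)\|_{L_2(T)}.
\]
For the nonlinear part, the Lipschitz property of $b$ on $[u_-,u_+]$ from Proposition~\ref{assumptionsonb} gives $\|b(v)-b(w)\|_{L_2(T)} \le B\|v-w\|_{L_2(T)}$, so its contribution after multiplication by $h_T$ has the form $h_T B\|v-w\|_{L_2(T)}$.

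For the jump term, linearity in $v$ of the flux yields $J_T(v)-J_T(w) = \llbracket[A\nabla(v-w)]\cdot n\rrbracket_{\pa T}$, so
\[
\|J_T(v)-J_T(w)\|_{L_2(\pa T)} \le \|A\|_{L_\infty(\omega_T)}\bigl\|\llbracket\nabla(v-w)\cdot n\rrbracket\bigr\|_{L_2(\pa T)}.
\]
Combining a scaled trace inequality with the inverse estimate on the neighbouring elements gives $\|\nabla(v-w)\|_{L_2(\pa T)} \lesssim h_T^{-1/2}\|\nabla(v-w)\|_{L_2(\omega_T)}$, and multiplying by $h_T^{1/2}$ we get a bound of the form $C\|A\|_{L_\infty(\omega_T)}\|\nabla(v-w)\|_{L_2(\omega_T)}$.

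Finally, gathering the three bounds and grouping the coefficients exactly in the shape of the data estimator $\eta_\cT^2(\bD,T) = h_T^2(\|\div A\|_{L_\infty(T)}^2 + h_T^{-2}\|A\|_{L_\infty(\omega_T)}^2 + B^2)$ produces a constant $\bar\Lambda_1$ depending only on $d$ and the shape-regularity of $\cT_0$, and the $H^1(\omega_T)$ norm dominates both $\|\nabla(v-w)\|_{L_2(\omega_T)}$ and $\|v-w\|_{L_2(T)}$. The main subtlety is that, unlike the linear case of~\cite{CKNS08}, the residual perturbation picks up the extra $b(v)-b(w)$ contribution; this is precisely what forces the $B^2$ term in the definition of $\eta_\cT(\bD,T)$, and controlling it requires only the Lipschitz bound of Proposition~\ref{assumptionsonb} together with the \emph{a priori} $L_\infty$-bounds of Assumption~\ref{a:apriori}.
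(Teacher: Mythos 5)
Your proposal is correct and follows essentially the same route as the paper's proof: split the indicator into volume-residual and jump parts, use the Lipschitz bound on $b$ (the paper writes this via the integral Taylor form $\int_0^1 b'(w+\xi e)\,d\xi$) for the nonlinear contribution, and apply inverse and scaled trace estimates to absorb the elliptic and jump terms into the data estimator $\eta_\cT(\bD,T)$. The only cosmetic difference is that you obtain the two-sided bound directly via the reverse triangle inequality in $\mathbb{R}^2$, whereas the paper uses its generalized triangle inequality to get the one-sided bound and appeals to symmetry.
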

\begin{proof}
The proof follows those in~\cite{CKNS08} and \cite{HoPo11a}, and we sketch the proof below.
From~\eqref{eta_cmpct}
\begin{equation}
\eta_\cT^2(v,T) \coloneqq h_T^2 \nr{R(v)}_{L_2(T)}^2 +  h_T \nr{ J_T(v)    }_{L_2(\pa T)}^2,   \quad v \in \V_\cT.
\end{equation}
Set $e = v - w$ and by definition of the residual, we get
\begin{align*}
R(v) & = f - \cN( w + e) \\
& = f + \nabla \cdot (A \nabla w) -  b(w) +\nabla \cdot (A \nabla e)  - \left(\int_0^1 b'(w + \xi e) ~d\xi \right)e \\
& = R(w) + \cD(e),
\end{align*}
where $\cD(e) \coloneqq  \nabla \cdot (A \nabla e) - \left(\int_0^1 b'(w + \xi e) ~d\xi\right) e$.  
Using the generalized triangle-inequality
\begin{equation*}\label{subsec:triangle}
\sqrt{(a + b)^2 + (c + d)^2} \le \sqrt{a^2 + c^2} + b + d, \quad \text{ for }  a,b,c,d > 0
\end{equation*}
and linearity of the jump residual we have
\begin{align}\label{L0}
\eta_{\cT}(v,T) & = \left( h_T^2 \nr{R(w) + \cD(e)}_{L_2(T)}^2 + h_T \nr{J(w) + J(e)}_{L_2(\pa T)}^2\right)^{1/2} 
\nonumber \\
& \le \eta_{\cT}(w,T) + h_T\nr{\cD (e)}_{L_2(T)} + h_T^\half \nr{J(e)}_{L_2(\pa T)} .
\end{align}
For the second term of \eqref{L0}, by triangle inequality we obtain
\begin{equation}\label{L1}
\nr{\cD(e)}_{L_2(T)}  \le \nr{\nabla \cdot (A \nabla e)}_{L_2(T)} + \left\| {\left(\int_0^1 b'(w + \xi e) ~d\xi\right) e } \right\|_{L_2(T)}.
\end{equation}
By the inverse inequality, the diffusion term satisfies the bound
 \begin{align}\label{L2}
\nr{\nabla \cdot (A \nabla e)}_{L_2(T)} 
& \le \nr{\div A \cdot \nabla e}_{L_2(T)} + \nr{A : D^2 e}_{L_2(T)}  \nonumber \\ 
 & \le  \left( \nr{\div A}_{L_\infty(T)} +  C_I h_T^{-1 } \nr {A}_{L_\infty(T)}  \right) \nr{\nabla e}_{L_2(T)},
\end{align}
where $D^{2} e$ is the Hessian of $e$. The second term in ~\eqref{L1} is bounded by
\begin{equation}\label{L3}
\left\| { \left(\int_0^1 b'(w + \xi e) ~d\xi \right)e } \right\|_{L_2(T)} \le B \nr{e}_{L_2(T)}.
\end{equation}
The jump term in~\eqref{L0} satisfies
\begin{align}\label{L4}
\nr{J(e)}_{L_2(\pa T)} & \le 2 (d+1)~ C_T  h_{T}^{-\half}  \nr A_{L_\infty(\omega_T)} \nr{\nabla e}_{L_2(\omega_T)}
\nonumber \\
& = C_Jh_{T}^{-\half} \nr A_{L_\infty(\omega_T)} \nr{\nabla e}_{L_2(\omega_T)},
\end{align}
where $C_T$ depends only on the shape-regularity of the triangulation. 
Putting together~\eqref{L0}, \eqref{L2}, \eqref{L3} and~\eqref{L4}, we obtain
\begin{align}\label{L5}
\eta_{\cT}(v,T) & \le \eta_{\cT}(w,T) + h_T \left( \nr{\div A}_{L_\infty(T)} +  (C_I + C_J) h_T^{-1 } \nr {A}_{L_\infty(\omega_T)} + B \right)  \nr{ e}_{H^1(\omega_T)} 
\nonumber \\
& \le  \eta_{\cT}(w,T) + C_{TOT}\, \eta_\cT(\bD,T)\nr{v - w}_{H^1(\omega_T)}.
\end{align}
This completes the proof.
\end{proof}

The local perturbation property as demonstrated in Lemma~\ref{primalLL} (respectively, Lemma~\ref{dualLL} below) leads to estimator reduction, one of the three key ingredients for contraction of the both the primal and combined quasi-errors.  This result holds for both the primal and limiting dual problems, whose proof can be found in~\cite[Corollary 2.4]{CKNS08} or~\cite[Theorem 3.4]{HoPo11a}. 
\begin{theorem}[Estimator reduction]\label{lemma:est_reduc}
Let the problem data satisfy Assumption~\ref{data_assumptions} and Assumption~\ref{a:apriori}. Let $\cT_1$ be a conforming refinements of $\cT_{0}$, $\cM \subset \cT_1$ be the marked set, and $\cT_2 = \text{REFINE}(\cT_1, \cM)$. Let
\[
\Lambda_1 \coloneqq (d+2)\bar \Lambda_1^2 m_\cE^{-2} \quad \an~  \lambda \coloneqq 1 - 2^{-1/d}> 0
\]
with $\bar \Lambda_1$ from Lemma~\ref{primalLL} (local Lipschitz property). Then  for any $v_1 \in \V_1$ and $v_2 \in \V_2$ and $\delta > 0$ 
\begin{align}\label{est_reduc}
\eta_2^2(v_2, \cT_2)  \le  &(1 + \delta) \left\{ \eta_1^2(v_1,\cT_1) - \lambda \eta_1^2(v_1,\cM) \right\}  + ( 1 + \delta^{-1}) \Lambda_1 \eta_0^2 \nrse{v_2 - v_1}^2.
\end{align}
Analogously for the limiting dual problem
\begin{align}\label{Dest_reduc}
\zeta_{2}^2(v_2, \cT_2)  \le  &(1 + \delta) \left\{ \zeta_{1}^2(v_1,\cT_1) - \lambda \zeta_{1}^2(v_1,\cM) \right\}  + ( 1 + \delta^{-1}) \Lambda_1 \eta_0^2 \nrse{v_2 - v_1}^2.
\end{align}
\end{theorem}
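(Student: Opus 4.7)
The plan is to split the analysis of $\eta_2^2(v_2,\cT_2)$ into two steps: first compare $\eta_2(v_2,T)$ to $\eta_2(v_1,T)$ on each element via the local Lipschitz property (Lemma~\ref{primalLL}), and then compare $\eta_2(v_1,\cdot)$ to $\eta_1(v_1,\cdot)$ by exploiting the reduction of element size under newest-vertex bisection. First I would fix an element $T \in \cT_2$, apply Lemma~\ref{primalLL} to $v_1,v_2 \in \V_2$, and then use the elementary inequality $(a+b)^2 \le (1+\delta)a^2 + (1+\delta^{-1})b^2$ to obtain
\[
\eta_2^2(v_2,T) \;\le\; (1+\delta)\,\eta_2^2(v_1,T) \;+\; (1+\delta^{-1})\,\bar\Lambda_1^2\,\eta_2^2(\bD,T)\,\nr{v_2-v_1}_{H^1(\omega_T)}^2.
\]
Summing over $T \in \cT_2$ separates the bound into a ``data-independent'' sum of $\eta_2^2(v_1,\cT_2)$ and a ``perturbation'' sum weighted by the data estimator.

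Next I would prove the reduction inequality
\[
\eta_2^2(v_1,\cT_2) \;\le\; \eta_1^2(v_1,\cT_1) \;-\; \lambda\,\eta_1^2(v_1,\cM),
\]
by splitting $\cT_2 = (\cT_2\cap\cT_1) \cup (\cT_2\setminus\cT_1)$. On the unrefined part, \eqref{indicator_local} gives $\eta_2(v_1,T)=\eta_1(v_1,T)$. On the refined part, each $T\in\cM$ is partitioned by its children $T'\subset T$ in $\cT_2$ with $h_{T'} \le 2^{-1/d} h_T$; since $v_1$ is polynomial on $T$ and $A$ is continuous, $A\nabla v_1$ has no jump across the newly created interior faces, so the jump terms on those faces vanish. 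Combining these facts on each refined $T$ yields $\sum_{T'\subset T}\eta_2^2(v_1,T') \le 2^{-1/d}\eta_1^2(v_1,T)$, and summing produces the claimed reduction with $\lambda = 1-2^{-1/d}$. This step is the heart of the argument; the main obstacle is the careful bookkeeping of the jump-residual on newly introduced faces, which is handled precisely by the Lipschitz regularity of $A$ assumed in Assumption~\ref{data_assumptions}.

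Finally I would bound the perturbation sum. The monotonicity~\eqref{mono_data} of the data estimator gives $\eta_2(\bD,T) \le \eta_0$ for every $T\in\cT_2$, and the finite overlap of the patches $\omega_T$ (each element lies in at most $d+2$ such patches in a shape-regular simplicial mesh) yields
\[
\sum_{T\in\cT_2} \nr{v_2-v_1}_{H^1(\omega_T)}^2 \;\le\; (d+2)\,\nr{v_2-v_1}_{H^1(\Omega)}^2.
\]
Applying coercivity~\eqref{coercive} to convert $\nr{\cdot}_{H^1}$ into $\nrse{\cdot}$ produces the constant $\Lambda_1 = (d+2)\bar\Lambda_1^2 m_\cE^{-2}$ and assembles the final bound~\eqref{est_reduc}.

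The dual estimate~\eqref{Dest_reduc} is obtained by the identical argument with $\eta$ replaced by $\zeta$ throughout: the local Lipschitz property transfers verbatim since the only change in the limiting dual residual is the replacement of $b$ by $b'(u)$, and Proposition~\ref{assumptionsonb} gives that $b'(u)$ is uniformly bounded on $[z_-,z_+]$; bisection reduction and finite overlap are purely mesh-theoretic and apply unchanged.
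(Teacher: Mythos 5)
Your proposal is correct and follows essentially the same route as the paper, which does not reproduce the proof but cites~\cite[Corollary 2.4]{CKNS08} and~\cite[Theorem 3.4]{HoPo11a}: local Lipschitz perturbation plus Young's inequality, the $2^{-1/d}$ mesh-size reduction with vanishing jumps on newly created interior faces, the finite patch overlap $(d+2)$, data-estimator monotonicity, and coercivity to produce $\Lambda_1=(d+2)\bar\Lambda_1^2 m_\cE^{-2}$. The only bookkeeping detail to add is that elements of $\cT_1\setminus\cM$ refined solely for conformity (completion) are handled by the same vanishing-jump argument, yielding non-increase of their indicator sums, which is all that is needed.
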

The contraction of the primal (semilinear) problem is established in~\cite{HTZ09a} and~\cite{Holst.M;McCammon.J;Yu.Z;Zhou.Y2012} based on Lemma~\ref{primal_quasi}, Lemma~\ref{primal_upper} and Theorem~\ref{lemma:est_reduc} as discussed above. 
\begin{theorem}[Contraction of the primal problem]\label{primal_contraction}
Let the problem data satisfy Assumption~\ref{data_assumptions} and Assumption~\ref{a:apriori}. Let $u$ the solution to~\eqref{primal_problem}. 
Let $\theta \in (0,1]$, and let $\{\cT_j, \V_j, u_j\}_{j \ge 0}$ be the sequence of meshes, finite element spaces and discrete solutions produced by GOAFEM.  Then there exist constants $\gamma_p > 0 \an 0 < \alpha < 1$, depending on the initial mesh $\cT_0$ and marking parameter $\theta$ such that
\begin{equation}\label{contractionres}
\nrse{u - u_{j+1}}^2 + \gamma_p \eta_{j+1}^2 \le \alpha^2\left(  \nrse{u - u_j}^2 + \gamma_p \eta_j^2 \right).
\end{equation}
\end{theorem}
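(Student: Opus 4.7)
The plan is to follow the by-now-standard CKNS-style combination argument (adapted to the semilinear setting in~\cite{HTZ09a}), fusing the three ingredients developed in this section with D\"orfler marking to produce a contraction on the quasi-error $Q_j^2 := \nrse{u-u_j}^2 + \gamma_p \eta_j^2$. All parameter choices are made \emph{a posteriori}, after the algebra has revealed the required inequalities.

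The first step is to apply the Galerkin quasi-orthogonality~\eqref{galerkin_qo} of Lemma~\ref{primal_quasi} with $u_1 := u_j$ and $u_2 := u_{j+1}$ to obtain $\nrse{u - u_{j+1}}^2 \le \Lambda_G \nrse{u-u_j}^2 - \nrse{u_{j+1}-u_j}^2$. The second step is to invoke estimator reduction~\eqref{est_reduc} with $v_1 := u_j$, $v_2 := u_{j+1}$, and the marked set from Procedure MARK (so in particular $\cM \supseteq \cM_p$), multiply the resulting inequality by $\gamma_p>0$, and add it to the quasi-orthogonality bound. Provided $\gamma_p$ is chosen small enough that $\gamma_p (1+\delta^{-1})\Lambda_1\eta_0^2 \le 1$, the positive $\nrse{u_{j+1}-u_j}^2$ term from estimator reduction is dominated by the negative one from quasi-orthogonality and we arrive at
\begin{equation*}
\nrse{u - u_{j+1}}^2 + \gamma_p \eta_{j+1}^2 \le \Lambda_G \nrse{u-u_j}^2 + \gamma_p(1+\delta)\bigl[\eta_j^2 - \lambda\, \eta_j^2(u_j,\cM_p)\bigr].
\end{equation*}

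The third step is to bring in D\"orfler marking~\eqref{markP}, which (since $\cM_p \subseteq \cM$) yields $\eta_j^2(u_j, \cM_p) \ge \theta^2 \eta_j^2$, and the global upper bound~\eqref{estimator_ge_error}, $\nrse{u-u_j}^2 \le C_1^2 \eta_j^2$. I would split the D\"orfler gain as $\lambda\theta^2 = \lambda\theta^2/2 + \lambda\theta^2/2$ and use the upper bound to convert one half into a multiple of $\nrse{u-u_j}^2$, giving
\begin{equation*}
\nrse{u - u_{j+1}}^2 + \gamma_p \eta_{j+1}^2 \le \Bigl[\Lambda_G - \tfrac{\gamma_p(1+\delta)\lambda\theta^2}{2 C_1^2}\Bigr]\nrse{u-u_j}^2 + \gamma_p (1+\delta)\Bigl(1-\tfrac{\lambda\theta^2}{2}\Bigr) \eta_j^2.
\end{equation*}
Setting $\alpha^2 := \max\{\rho_1, \rho_2\}$ with $\rho_1, \rho_2$ the two bracketed coefficients collapses this into the desired contraction~\eqref{contractionres}.

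The main obstacle is a simultaneously compatible choice of $(\delta, \gamma_p, \alpha)$. First I would fix $\delta > 0$ small enough that $\rho_2 = (1+\delta)(1 - \lambda\theta^2/2) < 1$, which is possible because $\lambda\theta^2 > 0$. The delicate part is $\rho_1$: because $\Lambda_G > 1$, forcing $\rho_1 < 1$ requires $\gamma_p$ to exceed a lower threshold proportional to $(\Lambda_G - 1)C_1^2/[(1+\delta)\lambda\theta^2]$, yet the cross-term absorption already imposes the upper cap $\gamma_p \le [(1+\delta^{-1})\Lambda_1\eta_0^2]^{-1}$. Compatibility of these two bounds is secured by the initial-mesh smallness hypothesis built into Lemma~\ref{primal_quasi} (which makes $\Lambda_G - 1 = \mathcal{O}(h_0^{2s})$ small); under that assumption $\gamma_p$ can be selected in the admissible interval, producing $\rho_1 < 1$, and $\alpha := \max\{\sqrt{\rho_1}, \sqrt{\rho_2}\} < 1$ completes the proof.
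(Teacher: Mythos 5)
Your proposal is correct and is essentially the argument the paper relies on: the paper does not reprove this theorem but cites the contraction framework of~\cite{HTZ09a} and~\cite{Holst.M;McCammon.J;Yu.Z;Zhou.Y2012}, which combines exactly your three ingredients (quasi-orthogonality~\eqref{galerkin_qo}, the upper bound~\eqref{estimator_ge_error}, and estimator reduction~\eqref{est_reduc} together with D\"orfler marking via $\cM_p\subseteq\cM$) and the same splitting of the D\"orfler gain with the same a posteriori tuning of $(\delta,\gamma_p,\alpha)$. Your observation that compatibility of the lower and upper thresholds for $\gamma_p$ needs $\Lambda_G-1=\mathcal{O}(h_0^{2s})$ sufficiently small is accurate and matches the initial-mesh fineness condition implicit in the cited works (and made explicit in the paper's dual contraction, Theorem~\ref{quasi_combine_C}), even though the statement of Theorem~\ref{primal_contraction} leaves it tacit.
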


\section{Contraction and Convergence of GOAFEM}
   \label{sec:contraction_thms}

In this section, we discuss the contraction and convergence of the GOAFEM described in \S\ref{sec:goafem}. In particular, we show that the GOAFEM algorithm generates a sequence $\{\cT_j, \V_j, u_j, \hat z_{j}\}_{j \ge 0}$ which contracts not only in the primal error as shown in \S\ref{sec:primalC}, but also in a linear combination of the primal and limiting dual error. We emphasize that it would be difficult to derive convergence results in terms of problem \eqref{linearized_dual} or \eqref{Eapprox_dual}, because at each refinement the problem is changing. So we show contraction in terms of the error in the limiting dual problem \eqref{limiting_dual_problem} as the target equation is fixed over the entire adaptive algorithm. Our approach of showing contraction in this section again relies on three main components: quasi-orthogonality, error-estimator as upper bound on error and estimator reduction.  Here we discuss the relevant results for the limiting dual problem with an emphasis on those that differ significantly from the corresponding  results for the primal problem. 
Note the limiting dual problem is not computable. We connect the error for the limiting dual problem to the computable quantities in the GOAFEM algorithm. For this purpose, we introduce Lemma~\ref{estimatorSwitch}, converting between limiting and approximate estimators in order to apply the D\"orfler property to a computable quantity; and Lemma~\ref{DPErr}, bounding the discrete error between approximate and limiting dual solutions in terms of the primal error.  We put these results together in Theorem~\ref{quasi_combine_C} to establish the contraction of the combined quasi-error.  Finally, the contraction of this form of the error is related to the error in the quantity of interest in Corollary~\ref{goalErrorBound}.

\subsection{Quasi-orthogonality for Limiting-dual Problem}
\label{subsec:Dquasi_orthog}
Similar to the proof of the quasi-orthogonality for the primal problem,  we make use of an $L_{2}$-lifting argument for the limiting-dual problem. Let $\hat z \in H_0^1(\Omega)$  and $\hat z_1 \in \V_1$ be the solutions to ~\eqref{limiting_dual_problem} and~\eqref{Ldiscrete_dual_problem}, respectively. We again use the duality argument, and introduce the problem: Find $y \in H_0^1(\Omega)$ such that
\begin{equation}\label{linearized_adjoint}
a(y,v) + \langle b'(u) y, v\rangle = (\hat z - \hat z_{1}, v) \tforall v \in H_0^1(\Omega)
\end{equation}
Then we have the following $L_{2}$-lifting result for the limiting-dual problem.
\begin{lemma}[Limiting-dual $L_2$-lifting]\label{dualL2lifting}
Let the problem data satisfy Assumption~\ref{data_assumptions} and Assumption~\ref{a:apriori}. Let $\cT_1$ be a conforming triangulation, and $\hat z \in H_0^1(\Omega)$  and $\hat z_1 \in \V_1$ be the solutions to~\eqref{limiting_dual_problem} and~\eqref{Ldiscrete_dual_problem}, respectively. 
Assume that the solution $y$ to \eqref{linearized_adjoint} belongs to $H^{1+s}(\Omega) \cap H_0^1(\Omega)$ for some $0 < s \le 1$ such that 
\begin{equation}\label{Delliptic_reg0A}
|y|_{H^{1+s}(\Omega)} \le \bar  K_R \nr {\hat z - \hat z_1}_{L_2(\Omega)}.
\end{equation}
Then
\begin{align}\label{Dduality_res0A}
\nr{\hat z - \hat z_1}_{L_{2}} & \le \hat C_\ast h_0^s \nrse{\hat z - \hat z_1}.
\end{align}
\end{lemma}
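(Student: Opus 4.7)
The plan is to mirror the duality argument from Lemma~\ref{L2lifting} for the primal problem, exploiting the fact that the limiting dual bilinear form $a(\cdot,\cdot) + \langle b'(u)\cdot,\cdot\rangle$ is symmetric (since $A$ is SPD and multiplication by the scalar $b'(u)$ is self-adjoint). Consequently, the auxiliary problem~\eqref{linearized_adjoint} is structurally the same as~\eqref{limiting_dual_problem}, so all the ingredients used in the primal proof transfer verbatim.

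First I would test~\eqref{linearized_adjoint} against $v=\hat z-\hat z_1\in H_0^1(\Omega)$ to obtain
\[
\nr{\hat z-\hat z_1}_{L_2}^2 = a(y,\hat z-\hat z_1) + \langle b'(u)y,\hat z-\hat z_1\rangle,
\]
and then invoke symmetry of both the $a$-form and of multiplication by $b'(u)$ to swap the arguments, yielding
\[
\nr{\hat z-\hat z_1}_{L_2}^2 = a(\hat z-\hat z_1,y) + \langle b'(u)(\hat z-\hat z_1),y\rangle.
\]
Subtracting~\eqref{Ldiscrete_dual_problem} from~\eqref{limiting_dual_problem} gives the Galerkin orthogonality $a(\hat z-\hat z_1,v_1) + \langle b'(u)(\hat z-\hat z_1),v_1\rangle = 0$ for every $v_1\in\V_1$, so inserting $v_1=\cI^h y$ and subtracting produces
\[
\nr{\hat z-\hat z_1}_{L_2}^2 = a(\hat z-\hat z_1,y-\cI^hy) + \langle b'(u)(\hat z-\hat z_1),y-\cI^hy\rangle.
\]

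From here the estimate follows by applying continuity~\eqref{continuity} to the first term, and by using the pointwise bound $|b'(u)|\le B$ (which follows from the Lipschitz property of $b$ in Proposition~\ref{assumptionsonb} together with $u\in L_\infty$ from Assumption~\ref{a:apriori}) combined with the H\"older inequality on the second term:
\[
\nr{\hat z-\hat z_1}_{L_2}^2 \le M_\cE\,\nr{\hat z-\hat z_1}_{H^1}\nr{y-\cI^hy}_{H^1} + B\,\nr{\hat z-\hat z_1}_{L_2}\nr{y-\cI^hy}_{L_2}.
\]
The interpolation estimates~\eqref{interpolation_estA}--\eqref{interpolation_estB}, the regularity assumption~\eqref{Delliptic_reg0A}, and coercivity~\eqref{coercive} to pass from $H^1$-norms to the energy norm on $\hat z-\hat z_1$, will give exactly the analogue of~\eqref{lift5}--\eqref{lift6}; dividing through by $\nr{\hat z-\hat z_1}_{L_2}$ (after bounding the $L_2$-factor on the right by the energy norm via $\nr{\cdot}_{L_2}\le m_\cE^{-1}\nr{\cdot}_{\cE}$) yields
\[
\nr{\hat z-\hat z_1}_{L_2} \le m_\cE^{-1}\bar K_R\bigl(M_\cE C_\cI + B\hat C_\cI h_0\bigr)h_0^s\,\nrse{\hat z-\hat z_1},
\]
with $\hat C_\ast\coloneqq m_\cE^{-1}\bar K_R(M_\cE C_\cI + B\hat C_\cI h_0)$.

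The only mild subtlety is verifying that the linear term $\langle b'(u)\cdot,\cdot\rangle$ plays the role of the integral operator $\langle \cB_1\cdot,\cdot\rangle$ from the primal case; once one observes that $b'(u)\in L_\infty$ with bound $B$ (the same Lipschitz constant), every step of Lemma~\ref{L2lifting} carries through without modification. I do not expect any genuine obstacle beyond this bookkeeping, since the linear nature of the limiting dual problem actually makes the duality argument cleaner than in the primal setting (no Taylor identity is needed).
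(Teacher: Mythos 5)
Your argument is correct and is exactly the route the paper intends: the paper omits the proof of Lemma~\ref{dualL2lifting} with the remark that it is essentially the same as Lemma~\ref{L2lifting}, and your write-up is precisely that duality argument adapted to the limiting dual problem (Galerkin orthogonality for~\eqref{limiting_dual_problem}--\eqref{Ldiscrete_dual_problem}, quasi-interpolation, continuity and coercivity, the regularity bound~\eqref{Delliptic_reg0A}, and the pointwise bound $|b'(u)|\le B$ in place of the Lipschitz/Taylor step). Your observation that the linearity of the limiting dual problem removes the need for the integral Taylor identity is also accurate, so no changes are needed.
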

\begin{proof}
The proof is essentially the same as that of Lemma \ref{L2lifting}, we omit here. 
\end{proof}
\begin{remark}
Similar to Corollary~\ref{TheOtherL2Lift}, the $L_2$-lifting Lemma~\ref{dualL2lifting} also holds for two Galerkin solutions to \eqref{Ldiscrete_dual_problem}, $\hat z_2 \in \V_2 \an \hat z_1\in V_1$ with $\V_{1} \subset \V_{2}$, namely, 
\begin{equation*}
\nr{\hat z_{2} - \hat z_1}_{L_{2}}  \le \hat C_\ast h_0^s \nrse{\hat z_{2} - \hat z_1}.
\end{equation*} 
The proof is essentially the same. We only need to replace ~\eqref{linearized_adjoint} by the problem: Find $y \in \V_2$ such that
\begin{equation*}
a(y,v) + \langle b'(u) y, v\rangle = \langle \hat z_2 - \hat z_{1}, v\rangle \tforall v \in \V_2.
\end{equation*} 
\end{remark}

With the help of Lemma~\ref{dualL2lifting}, we obtain the quasi-orthogonality for the limiting-dual problem.
\begin{lemma}[Quasi-orthogonality for Limiting Dual Problem]\label{dual_quasi}
Let the problem data satisfy Assumption~\ref{data_assumptions}, and $\cT_1, \cT_2 $ be two conforming triangulations with $\cT_2 \ge \cT_1$.  Let  $\hat z \in H_0^1(\Omega)$  the solution to ~\eqref{limiting_dual_problem} and $\hat z_i \in \V_i$ the solution to~\eqref{Ldiscrete_dual_problem}, $i = 1,2$. 
There exists a constant $\hat C_\ast > 0$ depending on the problem data $\bD$ and initial mesh $\cT_0$, and a number $0 < s \le 1$ related to the regularity of \eqref{linearized_adjoint}, such that for sufficiently small $h_0$ we have
\begin{equation}\label{Dquasi_ortho0}
\nrse{\hat z - \hat z_{2}}^2 \le \hat \Lambda \nrse{\hat z - \bar v}^2 - \nrse{\hat z_{2} - \bar v}^2, \quad \forall \bar v \in \V_{2},
\end{equation}
and in particular for $\bar v = \hat z_1$
\begin{equation}\label{Dquasi_ortho1}
\nrse{\hat z - \hat z_{2}}^2 \le \hat \Lambda_G \nrse{\hat z - \hat z_1}^2 - \nrse{\hat z_{2} - \hat z_1}^2
\end{equation}
where
\[
\hat \Lambda \coloneqq (1 -B m_\cE^{-1}\hat C_\ast  h_0^{s})^{-1} ~\an~ \hat \Lambda_G \coloneqq (1 - B \hat C_\ast^2 h_0^{2s})^{-1} 
\]
and $\hat C_\ast$ is the constant from Lemma~\ref{dualL2lifting}.
\end{lemma}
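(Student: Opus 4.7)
The plan is to mirror the structure of Lemma~\ref{primal_quasi}, replacing the primal $L_2$-lifting with Lemma~\ref{dualL2lifting} and its extension to two Galerkin solutions. A convenient feature here is that the limiting dual bilinear form $a(\cdot,\cdot) + \langle b'(u)\cdot,\cdot\rangle$ is genuinely linear in each argument (since $b'(u)$ is a fixed coefficient, not depending on $\hat z_j$), so full Galerkin orthogonality holds in the augmented form
\[
a(\hat z - \hat z_2, v_2) + \langle b'(u)(\hat z - \hat z_2), v_2\rangle = 0 \qquad \forall v_2 \in \V_2.
\]
The only reason we still need a quasi-orthogonality rather than true orthogonality is that the semi-norm $\nrse{\cdot}$ is defined from the principal part $a(\cdot,\cdot)$ alone, so the $b'(u)$-contribution appears as a perturbation to be absorbed.

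First I would write the algebraic splitting identity
\[
\nrse{\hat z - \hat z_2}^2 = \nrse{\hat z - \bar v}^2 - \nrse{\bar v - \hat z_2}^2 + 2a(\hat z - \hat z_2, \bar v - \hat z_2),
\]
valid for any $\bar v \in \V_2$. Using Galerkin orthogonality with test function $v_2 = \bar v - \hat z_2 \in \V_2$, the cross term rewrites as $-2\langle b'(u)(\hat z - \hat z_2), \bar v - \hat z_2\rangle$. The Lipschitz property of $b$ from Proposition~\ref{assumptionsonb}(1) gives the essential supremum bound $\|b'(u)\|_{L_\infty} \le B$, so H\"older's inequality yields
\[
2|a(\hat z - \hat z_2, \bar v - \hat z_2)| \le 2B \, \nr{\hat z - \hat z_2}_{L_2} \, \nr{\bar v - \hat z_2}_{L_2}.
\]

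Next I would apply Lemma~\ref{dualL2lifting} to the first factor to get $\nr{\hat z - \hat z_2}_{L_2} \le \hat C_\ast h_0^s \nrse{\hat z - \hat z_2}$, and coercivity~\eqref{coercive} to the second factor to get $\nr{\bar v - \hat z_2}_{L_2} \le m_\cE^{-1} \nrse{\bar v - \hat z_2}$. Young's inequality then splits the product into a sum, and absorbing the $\nrse{\hat z - \hat z_2}^2$ term on the left under the smallness assumption $\bar\Lambda := Bm_\cE^{-1}\hat C_\ast h_0^s < 1$ produces~\eqref{Dquasi_ortho0} with $\hat\Lambda = (1 - Bm_\cE^{-1}\hat C_\ast h_0^s)^{-1}$. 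For the sharper inequality~\eqref{Dquasi_ortho1}, specialize $\bar v = \hat z_1$ and invoke the remark following Lemma~\ref{dualL2lifting} (the two-Galerkin-solution lifting) to bound both $L_2$-factors by $\hat C_\ast h_0^s$ times their energy norms; this yields an $h_0^{2s}$ rather than $h_0^s$ perturbation, so the weaker condition $B\hat C_\ast^2 h_0^{2s} < 1$ suffices and one obtains $\hat\Lambda_G = (1 - B\hat C_\ast^2 h_0^{2s})^{-1}$.

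The argument is essentially routine once the correct identity and lifting are in place; no step presents a genuine obstacle. The only subtlety worth double-checking is that the $L_\infty$-bound on $b'(u)$ used in the H\"older step is in fact furnished by the Lipschitz constant $B$ of $b$ on $[u_-,u_+]$ (Proposition~\ref{assumptionsonb}(1)) combined with the a priori bound of Assumption~\ref{a:apriori}, and that $\hat C_\ast$ from Lemma~\ref{dualL2lifting} plays exactly the role that $C_\ast$ played in the primal argument, so no new constants need to be tracked.
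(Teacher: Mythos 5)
Your proposal is correct and follows essentially the same route as the paper's own (sketched) proof: the splitting identity from Lemma~\ref{primal_quasi}, Galerkin orthogonality in the augmented form $a(\hat z - \hat z_2, v) + \langle b'(u)(\hat z - \hat z_2), v\rangle = 0$, the bound $\|b'(u)\|_{L_\infty}\le B$ from the Lipschitz property of $b$ on $[u_-,u_+]$, then the dual $L_2$-lifting (and its two-Galerkin-solution variant for $\bar v=\hat z_1$), coercivity, and Young's inequality. Your observation that exact Galerkin orthogonality holds here because $b'(u)$ is a fixed coefficient matches the paper's argument (and in fact corrects a small typo in the paper's displayed orthogonality relation), so nothing further is needed.
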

\begin{proof}
The proof follows same arguments as in Lemma~\ref{primal_quasi}, except that in place of the inequality in~\eqref{QP2} we have for the limiting dual problem
\begin{align}\label{DQP2}
a(u - u_2, v) + \langle b'(u)(\hat z  - \hat z_2), v \rangle = 0 \tforall v \in \V_2,
\end{align}
yielding 
\begin{align}\label{DQP3}
2a (\hat z - \hat z_2, \bar v -\hat z_2) & \le 2B \nr{\hat z - \hat z_2}_{L_2}  \nr{\bar v - \hat z_2}_{L_2},  
\end{align}
as in~\eqref{QP3}. The rest of the proof is similar to Lemma~\ref{primal_quasi}, and we omit it here.
\end{proof}
%
\subsection{Estimator Perturbations for Dual Sequence}\label{subsec:Dest}
As we have seen in Theorem~\ref{lemma:est_reduc}, the local Lipschitz property (cf.~Lemma~\ref{primalLL}) plays a key role in deriving the estimator reduction property used to convert between estimators on different refinement levels in both the primal and limiting dual problems. The following lemma gives similar local Lipschitz properties for the approximate and limiting dual problems on a given refinement level.  
\begin{lemma}[Local Lipschitz Property for Dual Estimators]\label{dualLL}
Let the problem data satisfy Assumption~\ref{data_assumptions} and Assumption~\ref{a:apriori}. Let $\cT$ be a conforming refinement of $\cT_{0}$. Then for all $T \in \cT$ and for any  $v, w \in \V_\cT$, it holds that
\begin{align}\label{Dperturb_est0}
|\zeta_{\cT,j}(v,T)  - \zeta_{\cT,j}(w,T) | & \le \bar \Lambda_1 \eta_\cT(\bD,T)\nr{v - w}_{H^1(\omega_T)}.
\end{align}
In particular, for the error indicator of the limiting dual problem we have
\begin{align}\label{Dperturb_est0A}
|\zeta_{\cT}(v,T) - \zeta_{\cT}(w,T)| & \le \bar \Lambda_1 \eta_\cT(\bD,T)\nr{v - w}_{H^1(\omega_T)}.
\end{align}
 The constant $\bar \Lambda_1> 0$ depends on the dimension $d$ and the regularity of the initial mesh $\cT_0$.
\end{lemma}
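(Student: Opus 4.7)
My plan is to mimic the proof of Lemma~\ref{primalLL} almost verbatim, with the single substantive change that the nonlinear reaction term $b(v)$ is replaced by the linear terms $b'(u_j)v$ or $b'(u)v$. Both dual estimators have the same structure as the primal one, and both jump terms are identical to the primal jump, so the volumetric residual is the only place the argument changes.

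Fix $T \in \cT$ and $v,w \in \V_{\cT}$, set $e = v-w$, and expand $\hat R_j^{\ast}(v) = \hat R_j^{\ast}(w) + \cD_j^{\ast}(e)$, where
\[
\cD_j^{\ast}(e) \;=\; \nabla \cdot (A \nabla e) \;-\; b'(u_j)\, e.
\]
Apply the same generalized triangle inequality used in the proof of Lemma~\ref{primalLL} and the linearity of the jump $J_T$ in $e$ to obtain
\[
\zeta_{\cT,j}(v,T) \;\le\; \zeta_{\cT,j}(w,T) \;+\; h_T \|\cD_j^{\ast}(e)\|_{L_2(T)} \;+\; h_T^{1/2}\|J_T(e)\|_{L_2(\pa T)}.
\]
The diffusive piece $\nabla\cdot(A\nabla e)$ and the jump $J_T(e)$ are bounded exactly as in \eqref{L2} and \eqref{L4} of Lemma~\ref{primalLL}, since neither depends on the reaction operator. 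For the reaction piece we use that by Proposition~\ref{assumptionsonb} the function $b$ is Lipschitz with constant $B$ on $[u_-,u_+]$, so $|b'(u_j)(x)|\le B$ for a.e.\ $x\in\Omega$ (the a priori $L_\infty$ bound on $u_j$ from Assumption~\ref{a:apriori} makes this uniform in $j$). Hence $\|b'(u_j)\,e\|_{L_2(T)} \le B\|e\|_{L_2(T)}$, which plays the same role as \eqref{L3}. Collecting the three bounds, replacing $C_I+C_J$ appropriately, and using $h_T \le \mathrm{diam}(\Omega)$ to absorb the remaining factors gives
\[
\zeta_{\cT,j}(v,T) \;\le\; \zeta_{\cT,j}(w,T) \;+\; \bar\Lambda_1\, \eta_\cT(\bD,T)\,\|v-w\|_{H^1(\omega_T)},
\]
with the very same constant $\bar\Lambda_1$ and data estimator $\eta_\cT(\bD,T)$ from \eqref{data_est_element_NL}. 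Interchanging the roles of $v$ and $w$ gives the two-sided estimate \eqref{Dperturb_est0}.

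For the limiting dual indicator $\zeta_\cT$ the argument is word-for-word identical after replacing $b'(u_j)$ by $b'(u)$; the $L_\infty$ bound $|b'(u)|\le B$ follows in the same way, now from the a priori bound on the exact solution $u$ in Assumption~\ref{a:apriori}. This yields \eqref{Dperturb_est0A}.

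The only place that genuinely requires an argument beyond that of Lemma~\ref{primalLL} is the uniform $L_\infty$ bound on $b'(u_j)$; this is what makes the constant $\bar\Lambda_1$ independent of the iteration index $j$ and of the particular mesh $\cT$. Once that bound is noted, the rest is a direct transcription of the primal proof, since the principal part $-\nabla\cdot(A\nabla\cdot)$ and the jump operator are identical between the primal and dual formulations.
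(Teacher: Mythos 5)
Your proposal is correct and follows essentially the same route as the paper's proof: both expand the dual residual in $e = v-w$, reuse the diffusion and jump bounds from Lemma~\ref{primalLL} unchanged, and bound the reaction piece by $B\nr{e}_{L_2(T)}$, so that the same constant $\bar\Lambda_1$ and data estimator $\eta_\cT(\bD,T)$ appear. Your explicit remark that the a priori $L_\infty$ bounds on $u_j$ and $u$ give $|b'(u_j)|, |b'(u)| \le B$ uniformly is exactly the point the paper leaves implicit when it says the term $\hat\cL_j^\ast(e)$ "satisfies the same bound" as $\cD(e)$ in the primal proof.
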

\begin{proof}
The proof is similar to Lemma~\ref{primalLL}. We sketch the proof below. To prove~\eqref{Dperturb_est0}, by~\eqref{zeta_cmpct} we have
\begin{equation}\label{DLLresult}
\zeta_{\cT,j}^2(v,T) \coloneqq h_T^2 \nr{\hat R_j^\ast(v)}_{L_2(T)}^2 +  h_T \nr{ J_T(v)    }_{L_2(\pa T)}^2,   \quad v \in \V_\cT.
\end{equation}
Setting $e = v - w$ and applying linearity  to the definition of the dual residual as given by~\eqref{dualresi}, we obtain
\begin{align*}
\hat R_j^\ast(v)  = g + \hat \cL_j^\ast( w + e)  = \hat R_j^\ast(w) + \hat \cL_j^\ast(e).
\end{align*}
By the same reasoning as~\eqref{L0}, we get
\begin{align}\label{DL0}
\zeta_{\cT,j}(v,T) & \le \zeta_{\cT,j}(w,T) + h_T\nr{\hat \cL_j^\ast (e)}_{L_2(T)} + h_T^\half \nr{J(e)}_{L_2(\pa T)}. 
\end{align}
The term $\hat \cL_j^\ast$ (respectively $\hat \cL^\ast$ for the limiting dual) in~\eqref{DL0} satisfies the same bound as the analogous term $\cD$ in ~\eqref{L0} of Lemma~\ref{primalLL}. Hence the bounds~\eqref{Dperturb_est0} and~\eqref{Dperturb_est0A} hold with the same constants as in~\eqref{perturb_est}.
\end{proof}

With the help of Lemma~\ref{dualLL}, we are able to derive the following corollary, which addresses the error induced by switching between error indicators corresponding to the  approximate and limiting dual problems on a given element.  
\begin{corollary}\label{consecIndicators}
Let the problem data satisfy Assumption~\ref{data_assumptions} and Assumption~\ref{a:apriori}. Let $\cT $ be a conforming refinement of $\cT_{0}$, and $u, u_{j}$ are the solutions to~\eqref{primal_problem} and \eqref{discrete_primal} problems, respectively.   Let $\Theta \an K_Z$ the constants given in Proposition~\ref{assumptionsonb}.  For all $T \in \cT$ and for $v,w \in \V_\cT \cap [z_-,z_+]$  the dual indicator on $\cT$ satisfies
\begin{equation}\label{Dperturb_est}
|\zeta_{\cT,j}(v,T) - \zeta_{\cT,k}(w,T)|\le \bar \Lambda_1 \eta_\cT(\bD,T)\nr{v - w}_{H^1(\omega_T)}
+ \Theta K_Z  h_T\nr{u_j - u_k}_{L_2(T)}.
\end{equation}
In particular, for $\cT = \cT_1$, we have for the limiting estimator
\begin{align}
|\zeta_{1,1}(v,T)  - \zeta_{1}(w,T)| &\le \bar \Lambda_1 \eta_1(\bD,T)\nr{v - w}_{H^1(\omega_T)}
+ \Theta K_Z  h_T\nr{u - u_1}_{L_2(T)}, \label{CP0p}.
\end{align}
\end{corollary}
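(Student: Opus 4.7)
The plan is to bound the difference by inserting the intermediate indicator $\zeta_{\cT,j}(w,T)$ and splitting via the triangle inequality,
$$|\zeta_{\cT,j}(v,T) - \zeta_{\cT,k}(w,T)| \le |\zeta_{\cT,j}(v,T) - \zeta_{\cT,j}(w,T)| + |\zeta_{\cT,j}(w,T) - \zeta_{\cT,k}(w,T)|.$$
The first difference features the same operator index $j$ evaluated at two different arguments, so it is covered verbatim by~\eqref{Dperturb_est0} of Lemma~\ref{dualLL}, contributing exactly $\bar\Lambda_1 \eta_\cT(\bD,T)\nr{v-w}_{H^1(\omega_T)}$ to the right-hand side.

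For the second difference, the key observation is that the jump part $h_T^{1/2}\nr{J_T(w)}_{L_2(\pa T)}$ of the indicator defined in~\eqref{zeta_cmpct} depends only on $A\nabla w$, and hence is identical when switching from index $j$ to index $k$. Combined with the $1$-Lipschitz property of $x \mapsto \sqrt{x^2 + c^2}$ for fixed $c \ge 0$ (or equivalently the generalized triangle inequality used in the proof of Lemma~\ref{primalLL}), this reduces the estimate to the interior residual alone:
$$|\zeta_{\cT,j}(w,T) - \zeta_{\cT,k}(w,T)| \le h_T \nr{\hat R_j^\ast(w) - \hat R_k^\ast(w)}_{L_2(T)}.$$
By the definition~\eqref{dualresi}, the diffusion part of $\hat\cL_j^\ast$ and $\hat\cL_k^\ast$ also cancels, yielding the pointwise identity $\hat R_j^\ast(w) - \hat R_k^\ast(w) = (b'(u_k) - b'(u_j))w$.

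To finish, I would apply the Lipschitz estimate $|b'(\xi) - b'(\eta)| \le \Theta|\xi-\eta|$ from Proposition~\ref{assumptionsonb} together with the pointwise bound $|w(x)| \le K_Z$ valid because $w(x) \in [z_-(x), z_+(x)]$, obtaining
$$\nr{(b'(u_k) - b'(u_j))w}_{L_2(T)} \le \Theta\, K_Z\, \nr{u_j - u_k}_{L_2(T)}.$$
Multiplying by $h_T$ produces the second claimed contribution and completes~\eqref{Dperturb_est}. The estimate~\eqref{CP0p} then follows from the identical three-step argument specialized to $\cT = \cT_1$, with $b'(u_k)$ replaced by the limiting-dual operator $b'(u)$ so that $\hat R_1^\ast(w) - \hat R^\ast(w) = (b'(u) - b'(u_1))w$. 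I do not anticipate any genuine obstacle: the only conceptual point is recognizing that the jump residual and the diffusion portion of the interior residual are unaffected by the choice of linearization, which is precisely why the correction appears as the zeroth-order term $\Theta K_Z h_T \nr{u_j-u_k}_{L_2(T)}$ rather than through an $H^1$-norm of the primal error.
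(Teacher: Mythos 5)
Your proposal is correct and follows essentially the same route as the paper: split through the intermediate indicator with the same operator index, apply Lemma~\ref{dualLL} for the change of argument, and observe that switching the linearization point only perturbs the zeroth-order part of the interior residual by $(b'(u_j)-b'(u_k))w$, which is controlled by the Lipschitz bound on $b'$ and the $L_\infty$ bound $|w|\le K_Z$ from Proposition~\ref{assumptionsonb}. The paper performs the operator-switch step via a generalized triangle inequality on the squared indicator rather than the $1$-Lipschitz property of $x\mapsto\sqrt{x^2+c^2}$, but these are interchangeable and yield the identical estimate, including the limiting case~\eqref{CP0p}.
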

\begin{proof}
By the definition of the residuals for the approximate dual problems,  for any $w\in \V_{\cT}$ we have
\begin{align}\label{CP1}
\hat R^\ast_j(w) &= g + \nabla \cdot (A \nabla w) + b'(u_k) w + \left(b'(u_j) - b'(u_k)  \right)w
\nonumber \\
& = \hat R^\ast_k(w) +  \left(b'(u_j) - b'(u_k)  \right)w.
\end{align}
Using~\eqref{CP1} in the definition of the dual indicator~\eqref{zeta_cmpct} and applying a generalized triangle inequality
\begin{align}\label{CP2}
\zeta_{\cT,j}(w,T) & = \left( h_T^2 \nr{\hat R^\ast_k(w) + (b'(u_j) - b'(u_k)) w}_{L_2(T)}^2  + h_T\nr{J_T(w)} _{L_2(\pa T)}^2 \right)^{1/2} 
\nonumber \\
& \le \left( h_T^2 \nr{\hat R_k^\ast(w)}_{L_2(T)}^2 + h_T \nr{J_T(w)}_{L_2(\pa T)}^2  \right)^{1/2} + h_T \nr{\left(b'(u_j) - b'(u_k)\right) w}_{L_2(T)} 
\nonumber \\
& \le \zeta_{\cT,k}(w,T) +  \Theta K_Z h_T\nr{u_j - u_k}_{L_2(T)}.
\end{align}
Applying~\eqref{Dperturb_est0} in Lemma~\ref{dualLL} to the estimate~\eqref{CP2}, we obtain~\eqref{Dperturb_est}.
\end{proof}

As an immediate consequence of Corollary~\ref{consecIndicators}, we have the following results on the error induced by switching between dual estimators over a collection of elements on a given refinement level.  This estimate plays a key role in the contraction argument below, as we apply it to switching between the estimator for the limiting dual and the computed error estimators for the approximate dual problems in the GOAFEM algorithm.
\begin{corollary}\label{cor:perturb_sets} 
Let the hypotheses of Corollary~\ref{consecIndicators} hold.  Then for any subsets $\cM_1, \cM_2 \subseteq \cT_1$ and arbitrary $\delta_1, \delta_2, \delta_A, \delta_B > 0$
\begin{align}\label{DPS0}
\zeta_1^2(v,\cM_1) &\ge (1 + \delta_1)^{-1}  (1 + \delta_A)^{-1}\zeta_{1,1}^2(w,\cM_1) 
\nonumber \\
& \quad -  (1 + \delta_1)^{-1} \delta_A^{-1} \Theta^2 K_Z^2 h_0^2\nr{u - u_1}_{L_2}^2 
-(d+2)\delta_1^{-1} \bar \Lambda_1^2 \eta_0^2 \nr{v - w}_{H^1}^2 
\\ 
\zeta_{1,1}^2(w,\cM_2) &\ge (1 + \delta_2)^{-1}  (1 + \delta_B)^{-1}\zeta_{1}^2(v,\cM_2) 
\nonumber \\ \label{DPS1}
& \quad -  (1 + \delta_2)^{-1} \delta_B^{-1} \Theta^2 K_Z^2 h_0^2\nr{u - u_1}_{L_2}^2 
-(d+2)\delta_2^{-1} \bar \Lambda_1^2 \eta_0^2 \nr{v - w}_{H^1}^2. 
\end{align}
\end{corollary}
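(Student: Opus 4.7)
The plan is to establish both inequalities by applying Corollary~\ref{consecIndicators} element-wise, squaring and invoking Young's inequality twice in a specific order to decouple the three contributions, then summing over the marked set with standard bounds on patch overlap and mesh size.

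Fix $T \in \cM_1$ and write the bound of Corollary~\ref{consecIndicators} as $\zeta_{1,1}(w,T) \le \zeta_{1}(v,T) + A(T) + B(T)$ with $A(T) \coloneqq \bar\Lambda_1\,\eta_1(\bD,T)\nr{v-w}_{H^1(\omega_T)}$ and $B(T) \coloneqq \Theta K_Z h_T \nr{u-u_1}_{L_2(T)}$. Squaring and applying Young's inequality first with parameter $\delta_A$ (to peel $B(T)$ off from $\zeta_1(v,T)+A(T)$), then with $\delta_1$ (to separate $\zeta_1(v,T)$ from $A(T)$) yields
\[
\zeta_{1,1}^2(w,T) \le (1+\delta_A)(1+\delta_1)\zeta_1^2(v,T) + (1+\delta_A)(1+\delta_1^{-1})A(T)^2 + (1+\delta_A^{-1})B(T)^2.
\]
Rearranging for a lower bound on $\zeta_1^2(v,T)$ and invoking the elementary identity $(1+\delta)^{-1}(1+\delta^{-1}) = \delta^{-1}$ twice produces the local estimate
\[
\zeta_1^2(v,T) \ge (1+\delta_1)^{-1}(1+\delta_A)^{-1}\zeta_{1,1}^2(w,T) - \delta_1^{-1}A(T)^2 - (1+\delta_1)^{-1}\delta_A^{-1}B(T)^2.
\]

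Summing over $T \in \cM_1$ and controlling each remainder separately produces~\eqref{DPS0}. For the $L_2$-perturbation, $h_T \le h_0$ together with disjointness of the elements gives $\sum_T B(T)^2 \le \Theta^2 K_Z^2 h_0^2 \nr{u-u_1}_{L_2}^2$. For the $H^1$-perturbation, monotonicity of the data estimator yields $\eta_1(\bD,T) \le \eta_0$, and the shape-regular patch overlap
\[
\sum_{T\in\cM_1}\nr{v-w}_{H^1(\omega_T)}^2 \le (d+2)\nr{v-w}_{H^1}^2,
\]
holding because each element belongs to $\omega_{T'}$ for at most $d+2$ choices of $T'$ (itself and its up to $d+1$ face-neighbors), yields $\sum_T A(T)^2 \le (d+2)\bar\Lambda_1^2\eta_0^2\nr{v-w}_{H^1}^2$. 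The estimate~\eqref{DPS1} follows by the identical argument after swapping the roles of $v$ and $w$ in Corollary~\ref{consecIndicators} and relabelling the Young's parameters as $\delta_2,\delta_B$.

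The derivation is essentially bookkeeping, so the only real subtlety is ensuring the correct order of the two Young's applications: the asymmetric appearance of $(1+\delta_1)^{-1}$ multiplying the $L_2$-perturbation term but \emph{not} the $H^1$-perturbation term in~\eqref{DPS0} forces $\delta_A$ to be introduced in the outer step and $\delta_1$ in the inner step. Reversing the order produces a valid but structurally different inequality whose constants will not align with the subsequent combined-quasi-error contraction argument.
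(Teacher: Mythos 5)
Your proof is correct and follows essentially the same route as the paper's own (very terse) argument: square the local bound of Corollary~\ref{consecIndicators}, apply Young's inequality twice, sum over the marked set using $h_T\le h_0$, data-estimator monotonicity, and the $(d+2)$-fold patch overlap. Your bookkeeping, including the order of the two Young's applications, reproduces exactly the constants in \eqref{DPS0}--\eqref{DPS1}, so nothing further is needed.
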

\begin{proof}
The conclusions follow by squaring inequality~\eqref{CP0p}, applying Young's inequality twice, and then summing over element $T \in \cM_{1}$ (respectively $T\in \cM_{2}$).  The $H^1$ norm is summed over all elements $T \in \cT_1$ counting each element $d+2$ times, the maximum number of elements in each patch $\omega_T$. 
\end{proof}


\subsection{Contraction of GOAFEM}\label{subsec:contraction}
The main contraction argument Theorem~\ref{quasi_combine_C} follows after two more lemmas. The first combines a sequence of estimates to convert the non-computable limiting estimator for the dual problem to a computable quantity, apply the D\"orfler property and then convert back. The second relates the difference between the Galerkin solutions of the limiting and approximate dual problems to the primal error.  
Motivated by estimator reduction for the limiting dual problem as in  equation~\eqref{Dest_reduc}
\begin{align}
\zeta_{2}^2(\hat z_2, \cT_2)  \le  &(1 + \delta) \left\{ \zeta_{1}^2(\hat z_1,\cT_1) - \lambda \zeta_{1}^2(\hat z_1,\cM) \right\}  + ( 1 + \delta^{-1}) \Lambda_1 \eta_0^2 \nrse{\hat z_2 - \hat z_1}^2,
\end{align}
the following lemma addresses the conversion between the limiting estimator $\zeta_{1}^2(\hat z_1,\cM)$ and and the computable estimator $\zeta_{1,1}^2(\hat z_1^1,\cM)$ necessary for marking the mesh for refinement.  
\begin{lemma}\label{estimatorSwitch}
Let the problem data satisfy Assumption~\ref{data_assumptions} and Assumption~\ref{a:apriori}. Let $\Theta \an K_Z$ as given by Proposition~\ref{assumptionsonb}, $C_\ast$ as given by Lemma~\ref{L2lifting} and $\Lambda_1$ as given in Lemma~\ref{lemma:est_reduc}. Let
\begin{align*}
u & \text{ the solution to}~\eqref{primal_problem}, 
&u_1 & \text{ the solution to}~\eqref{discrete_primal}, \\
\hat z & \text{ the solution to}~\eqref{limiting_dual_problem},
& \hat z_1 & \text{ the solution to}~\eqref{Ldiscrete_dual_problem},
&\hat z_1^1 & \text{ the solution to}~\eqref{approx_dual_problem}.
\end{align*}
Let $\zeta_{1,1}(\hat z_1^1, \cM)$ satisfy the D\"orfler property \eqref{markD} for $\cM \subset \cT_1$, namely
$
\zeta_{1,1}^2(\hat z_1^1, \cM) \ge \theta^2 \zeta^2_{1,1}(\hat z_1^1, \cT_1).
$ 
Then for arbitrary $\delta_1, \delta_2, \delta_A, \delta_B > 0$ there is a $\delta_4>0$   such that
\begin{align}\label{ES0}
-\zeta_1^2(\hat z_1,\cM) 
&\le - \f {\beta \theta^2}{ (1 + \delta_4)} \zeta_{1}^2(\hat z_1,\cT_1) 
- \f{(1-\beta) \theta^2}{ (1 + \delta_4)C_{1}^{2}} \nrse{\hat z - \hat z_1}^2
\nonumber \\
& \quad +  \left( \f{ \theta^2}{  (1 + \delta_A) (1 + \delta_2) \delta_B} + \f 1\delta_A   \right)  \f{  \Theta^2 K_Z^2 C_\ast^2 h_0^{2(1+s)}}{(1 + \delta_1)}  \nrse{u - u_1}^2 
\nonumber \\
& \quad + \left( \f{\theta^2}{ (1 + \delta_1)  (1 + \delta_A) \delta_2} + \f 1 \delta_1 \right)\Lambda_1 \eta_0^2(\bD, \cT_0) \nrse{\hat z_1 -\hat  z_1^1}^2.
\end{align}
\end{lemma}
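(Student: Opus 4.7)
The plan is to start from the D\"orfler condition on the computable (approximate-dual) estimator and work back to a lower bound on the non-computable limiting-dual estimator on the full triangulation, picking up controllable perturbations at each step. The main ingredients are Corollary~\ref{cor:perturb_sets} (used twice, once in each direction), the global upper bound Lemma~\ref{primal_upper}, the primal $L_2$-lifting Lemma~\ref{L2lifting}, and coercivity~\eqref{coercive} to convert $H^1$ norms into energy norms.

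First, I would apply~\eqref{DPS0} of Corollary~\ref{cor:perturb_sets} with $v=\hat z_1$, $w=\hat z_1^1$, and $\cM_1=\cM$, which after negation yields an upper bound on $-\zeta_1^2(\hat z_1,\cM)$ in terms of $-\zeta_{1,1}^2(\hat z_1^1,\cM)$ plus the usual $L_2$-error and $H^1$-discrepancy remainders. Into this I would feed the D\"orfler property
\[
\zeta_{1,1}^2(\hat z_1^1,\cM)\ge\theta^2\zeta_{1,1}^2(\hat z_1^1,\cT_1),
\]
which holds because $\cM_d\subseteq\cM$ and D\"orfler is assumed to hold on $\cM_d$. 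This converts the bound into one involving $-\theta^2\zeta_{1,1}^2(\hat z_1^1,\cT_1)$. Next, I would apply~\eqref{DPS1} with $v=\hat z_1$, $w=\hat z_1^1$, and $\cM_2=\cT_1$ to bound $\zeta_{1,1}^2(\hat z_1^1,\cT_1)$ from below by $\zeta_1^2(\hat z_1,\cT_1)$ minus perturbations of the same form. Substituting this back produces a clean inequality with a leading term $-\frac{\theta^2}{(1+\delta_1)(1+\delta_A)(1+\delta_2)(1+\delta_B)}\zeta_1^2(\hat z_1,\cT_1)$ and two bookkeeping groups: one multiplying $\|u-u_1\|_{L_2}^2$ and one multiplying $\|\hat z_1-\hat z_1^1\|_{H^1}^2$.

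With the main skeleton in place, I would then convert the perturbation terms into the quantities appearing in the statement. The $L_2$-lifting Lemma~\ref{L2lifting} gives $h_0^2\|u-u_1\|_{L_2}^2\le C_\ast^2 h_0^{2(1+s)}\nrse{u-u_1}^2$, which produces the $\Theta^2K_Z^2C_\ast^2 h_0^{2(1+s)}$ factor. Coercivity~\eqref{coercive} together with the identity $(d+2)\bar\Lambda_1^2 m_\cE^{-2}=\Lambda_1$ from Theorem~\ref{lemma:est_reduc} converts $(d+2)\bar\Lambda_1^2\|\hat z_1-\hat z_1^1\|_{H^1}^2$ into $\Lambda_1\nrse{\hat z_1-\hat z_1^1}^2$. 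Finally, I would introduce $\beta\in[0,1]$ to split the leading negative term via $-\zeta_1^2(\hat z_1,\cT_1)=-\beta\zeta_1^2(\hat z_1,\cT_1)-(1-\beta)\zeta_1^2(\hat z_1,\cT_1)$ and use the global upper bound from Lemma~\ref{primal_upper}, $\nrse{\hat z-\hat z_1}^2\le C_1^2\zeta_1^2(\hat z_1,\cT_1)$, on the second piece to replace $-(1-\beta)\zeta_1^2(\hat z_1,\cT_1)$ by $-(1-\beta)C_1^{-2}\nrse{\hat z-\hat z_1}^2$. Setting $(1+\delta_4):=(1+\delta_1)(1+\delta_A)(1+\delta_2)(1+\delta_B)$ matches the prefactor in the statement.

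The routine part is the bookkeeping: grouping the two contributions to each perturbation coefficient so that they take the factored form written in~\eqref{ES0}. The only subtle step is ordering the three conversions correctly, namely applying D\"orfler strictly between the two uses of Corollary~\ref{cor:perturb_sets}; applying them in the reverse order would lose the factor $\theta^2$ in front of the $\delta_B$ and $\delta_2$ remainder terms and produce a weaker inequality. Everything else is an exercise in Young-style arithmetic, which I would not grind out explicitly.
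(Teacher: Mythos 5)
Your proposal is correct and follows essentially the same route as the paper's proof: apply \eqref{DPS0} of Corollary~\ref{cor:perturb_sets} to pass from the limiting to the computable estimator on $\cM$, insert the D\"orfler property, convert back on the full mesh via \eqref{DPS1}, use $L_2$-lifting and coercivity to rewrite the remainders, and finish with the $\beta$-split combined with the dual upper bound \eqref{dual_upper} and the definition $(1+\delta_4)=(1+\delta_1)(1+\delta_2)(1+\delta_A)(1+\delta_B)$. The only (cosmetic) difference is your $\beta\in[0,1]$ versus the paper's $\beta\in(0,1)$, which is needed later for contraction but immaterial to this inequality.
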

\begin{proof}
From Corollary~\ref{cor:perturb_sets}, $L_2$-lifting Lemma~\ref{L2lifting} and coercivity~\eqref{coercive}
\begin{align}\label{ES3}
-\zeta_1^2(\hat z_1,\cM) &\le -(1 + \delta_1)^{-1}  (1 + \delta_A)^{-1}\zeta_{1,1}^2(\hat z_1^1,\cM) 
\nonumber \\
& \quad +  (1 + \delta_1)^{-1} \delta_A^{-1} \Theta^2 K_Z^2 h_0^2\nr{u - u_1}_{L_2}^2 
+\delta_1^{-1} \bar \Lambda_1^2 (d+2)\eta_0^2 \nr{\hat z_1 -\hat  z_1^1}_{H^1}^2 
\nonumber \\
&\le -(1 + \delta_1)^{-1}  (1 + \delta_A)^{-1}\zeta_{1,1}^2(\hat z_1^1,\cM) 
\nonumber \\
& \quad +  (1 + \delta_1)^{-1} \delta_A^{-1} \Theta^2 K_Z^2 C_\ast^2 h_0^{2(1+s)}\nrse{u - u_1}^2
+\delta_1^{-1} \Lambda_1 \eta_0^2 \nrse{\hat z_1 -\hat  z_1^1}^2,
\end{align}
with $\Lambda_1 \coloneqq \bar \Lambda_1^2(d+2)m_\cE^{-2}$.
The D\"orfler property may be applied to the first term on the RHS of~\eqref{ES3}
\begin{equation}\label{ES4}
 - \zeta_{1,1}^2(\hat z_1^1,\cM)  \le  - \theta^2 \zeta_{1,1}^2(\hat z_1^1).
\end{equation}
Converting back to he limiting estimator by~\eqref{DPS1} in Corollary~\ref{cor:perturb_sets}
\begin{align}\label{ES5}
-\zeta_{1,1}^2(\hat z_1^1)&\le -(1 + \delta_2)^{-1}  (1 + \delta_B)^{-1}\zeta_{1}^2(\hat z_1, \cM) 
\nonumber \\
& \quad +  (1 + \delta_2)^{-1} \delta_B^{-1} \Theta^2 K_Z^2 C_\ast^2 h_0^{2(1+s)}\nrse{u - u_1}^2
+\delta_2^{-1} \Lambda_1 \eta_0^2 \nrse{\hat z_1 -\hat  z_1^1}^2.
\end{align}
Define $\delta_4$ by
\begin{equation}\label{ES8}
(1 + \delta_4) \coloneqq (1 + \delta_1)(1 + \delta_2)(1 + \delta_A)(1 + \delta_B).
\end{equation}
Then by plugging \eqref{ES4} and \eqref{ES5} in the first term on the RHS of~\eqref{ES3}, we obtain
\begin{align}\label{ES6}
-\zeta_1^2(\hat z_1,\cM) 
&\le - \theta^2 (1 + \delta_4)^{-1} \zeta_{1}^2(\hat z_1) 
\nonumber \\
& \quad +  \left( \theta^2  (1 + \delta_A)^{-1} (1 + \delta_2)^{-1} \delta_B^{-1} + \delta_A^{-1}   \right) (1 + \delta_1)^{-1}  \Theta^2 K_Z^2 C_\ast^2 h_0^{2(1+s)}\nrse{u - u_1}^2 
\nonumber \\
& \quad + \left( \theta^2 (1 + \delta_1)^{-1}  (1 + \delta_A)^{-1} \delta_2^{-1} + \delta_1^{-1} \right)\Lambda_1 \eta_0^2 \nrse{\hat z_1 -\hat  z_1^1}^2.
\end{align}
Finally, we split the first term on the RHS of~\eqref{ES6} into two pieces for some $\beta \in (0,1)$,  and apply the upper-bound estimate \eqref{dual_upper} in Lemma~\ref{primal_upper} to the second piece yielding
\begin{align}\label{ES7}
-\zeta_1^2(\hat z_1,\cM) 
&\le - \beta \theta^2 (1 + \delta_4)^{-1} \zeta_{1}^2(\hat z_1) 
 - (1-\beta) \theta^2 (1 + \delta_4)^{-1} C_1^{-2}\nrse{\hat z - \hat z_1}^2
\nonumber \\
+ & \left( \theta^2  (1 + \delta_A)^{-1} (1 + \delta_2)^{-1} \delta_B^{-1} + \delta_A^{-1}   \right) (1 + \delta_1)^{-1}  \Theta^2 K_Z^2 C_\ast^2 h_0^{2(1+s)}\nrse{u - u_1}^2 
\nonumber \\
+ & \left( \theta^2 (1 + \delta_1)^{-1}  (1 + \delta_A)^{-1} \delta_2^{-1} + \delta_1^{-1} \right)\Lambda_1 \eta_0^2 \nrse{\hat z_1 -\hat  z_1^1}^2.\nonumber
\end{align}
This completes the proof.
\end{proof}

We may convert $\nrse{\hat z_1 -\hat  z_1^1}$ in the last term on the RHS of~\eqref{ES0}  to the error $\nrse{u -u_1}$ as stated in the following lemma.
\begin{lemma}\label{DPErr}
Let the problem data satisfy Assumption~\ref{data_assumptions} and Assumption~\ref{a:apriori}. Let $\Theta \an K_Z$ the constants given in Proposition~\ref{assumptionsonb} and $C_\ast \an \hat C_\ast$ the constants given by Lemmas~\ref{L2lifting} and~\ref{dualL2lifting}, respectively. Let
\begin{align*}
u & \text{ the solution to}~\eqref{primal_problem}, 
&u_1 & \text{ the solution to}~\eqref{discrete_primal}, \\
 \hat z_1 & \text{ the solution to}~\eqref{Ldiscrete_dual_problem},
&\hat z_1^1 & \text{ the solution to}~\eqref{approx_dual_problem}.
\end{align*}
Then
\begin{equation}\label{DP0}
\nrse{\hat z_1 - \hat z_1^1} \le \Theta  K_Z C_\ast \hat C_\ast h_0^{2s} \nrse{u - u_1}.
\end{equation}
\end{lemma}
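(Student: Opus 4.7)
The plan is to subtract the two discrete dual equations, test against the error itself to obtain an energy estimate, and then invoke the two $L_2$-lifting results available in the paper. More precisely, let $e := \hat z_1 - \hat z_1^1 \in \V_1$. Subtracting~\eqref{Ldiscrete_dual_problem} from~\eqref{approx_dual_problem}, splitting $b'(u)\hat z_1 - b'(u_1)\hat z_1^1 = b'(u)e + (b'(u) - b'(u_1))\hat z_1^1$, yields the identity
\[
a(e,v) + \langle b'(u) e, v \rangle = -\langle (b'(u) - b'(u_1))\hat z_1^1, v \rangle, \quad \forall v \in \V_1.
\]

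First I would test with $v = e \in \V_1$. Since $b'(u) \ge 0$ by Assumption~\ref{data_assumptions}, the reaction term on the left is nonnegative and may be dropped, leaving $\nrse{e}^2 \le |\langle (b'(u) - b'(u_1))\hat z_1^1, e\rangle|$. Applying Cauchy--Schwarz in $L_2$, the Lipschitz property of $b'$ with constant $\Theta$ (Proposition~\ref{assumptionsonb}), and the a priori $L_\infty$ bound $|\hat z_1^1| \le K_Z$ (also Proposition~\ref{assumptionsonb}), this becomes
\[
\nrse{e}^2 \le \Theta K_Z \nr{u - u_1}_{L_2} \nr{e}_{L_2}.
\]

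Next I would apply $L_2$-lifting to both norms on the right. The primal lifting Lemma~\ref{L2lifting} gives $\nr{u - u_1}_{L_2} \le C_\ast h_0^s \nrse{u - u_1}$ directly. For $\nr{e}_{L_2}$, I would apply an Aubin--Nitsche argument analogous to Lemma~\ref{dualL2lifting}: viewing $e$ as the Galerkin approximation in $\V_1$ of the continuous function $\tilde e \in H_0^1(\Omega)$ solving $a(\tilde e, v) + \langle b'(u) \tilde e, v\rangle = -\langle (b'(u) - b'(u_1))\hat z_1^1, v \rangle$ for all $v \in H_0^1(\Omega)$, one introduces the self-adjoint dual problem with right-hand side $(e,\cdot)$, uses elliptic regularity of the form~\eqref{Delliptic_reg0A}, and interpolates via~\eqref{interpolation_estA}--\eqref{interpolation_estB} to arrive at $\nr{e}_{L_2} \le \hat C_\ast h_0^s \nrse{e}$. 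Substituting both liftings and dividing through by $\nrse{e}$ yields
\[
\nrse{e} \le \Theta K_Z C_\ast \hat C_\ast h_0^{2s} \nrse{u - u_1},
\]
which is the claim.

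The main obstacle is the second $L_2$-lifting, since $\hat z_1$ and $\hat z_1^1$ are Galerkin solutions to two \emph{different} bilinear forms on the same space $\V_1$, so $e$ is not the standard Galerkin error covered by Lemma~\ref{dualL2lifting} or its corollary. The work lies in confirming that the Aubin--Nitsche machinery nonetheless goes through: the adjoint regularity used for the limiting dual problem applies with the same $\hat C_\ast$ (up to possibly relabeling the constants as in the remark consolidating the $C_{\ast,k}$'s in Section~\ref{subsec:quasi_orthog}), and the linear perturbation in the source enters only through the already-controlled factor $\Theta K_Z \nr{u - u_1}_{L_2}$, keeping both $h_0^s$ gains intact.
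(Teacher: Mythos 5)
Your proposal follows the paper's proof essentially step for step: subtract the two discrete dual equations \eqref{Ldiscrete_dual_problem} and \eqref{approx_dual_problem}, test the resulting error equation with $e=\hat z_1-\hat z_1^1\in\V_1$, drop the nonnegative reaction term by monotonicity, and estimate the perturbation term by the Lipschitz bound on $b'$ together with the $L_\infty$ bound of Proposition~\ref{assumptionsonb}, arriving at $\nrse{e}^2\le \Theta K_Z\nr{u-u_1}_{L_2}\nr{e}_{L_2}$; the only difference is that you attach the factor $(b'(u)-b'(u_1))$ to $\hat z_1^1$ while the paper attaches it to $\hat z_1$, which is immaterial since both functions obey the same $L_\infty$ bounds. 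The paper then concludes exactly as you do, by invoking ``primal and dual $L_2$-lifting'' to produce the factor $C_\ast\hat C_\ast h_0^{2s}$.

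The step you correctly single out as the obstacle is, however, not repaired by the argument you sketch. Aubin--Nitsche duality applied to the continuous problem $a(\tilde e,v)+\langle b'(u)\tilde e,v\rangle=-\langle (b'(u)-b'(u_1))\hat z_1^1,v\rangle$ and its Galerkin approximation $e$ lifts the \emph{Galerkin error}, giving $\nr{\tilde e-e}_{L_2}\le \hat C_\ast h_0^s\nrse{\tilde e-e}$; it does not give $\nr{e}_{L_2}\le \hat C_\ast h_0^s\nrse{e}$. To pass from the former to the latter you would also need to control $\nr{\tilde e}_{L_2}$, and the a priori estimate for the perturbed problem only yields $\nr{\tilde e}_{L_2}\lesssim \Theta K_Z\nr{u-u_1}_{L_2}$, with no additional power of $h_0$; indeed an inequality of the form $\nr{v}_{L_2}\lesssim h_0^s\nrse{v}$ for a discrete function $v\in\V_1$ is of reverse-inverse type and false in general, so any valid proof must use structure of $e$ beyond what you indicate. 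What your (and the paper's) energy identity does give rigorously, using only coercivity in the form $\nr{e}_{L_2}\le m_\cE^{-1}\nrse{e}$ and the primal lifting of Lemma~\ref{L2lifting}, is $\nrse{e}\le \Theta K_Z m_\cE^{-1}C_\ast h_0^{s}\nrse{u-u_1}$ --- one power of $h_0^s$ rather than two. In fairness, the paper is equally terse at this point: it applies ``dual $L_2$-lifting'' to the pair $(\hat z_1,\hat z_1^1)$, which solve different bilinear forms on the same space and so are not covered by Lemma~\ref{dualL2lifting} or its remark. So your route reproduces the paper's reasoning, but the justification you offer for the critical inequality does not close that gap; note that the weaker $h_0^{s}$ bound still suffices for the later use in Lemma~\ref{estimatorSwitch} and Theorem~\ref{quasi_combine_C}, at the price of a slightly stronger smallness requirement on $h_0$.
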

\begin{proof}
Recall that
\begin{align}\label{DP1}
\hat z_1 & \text{ solves } a(\hat z_1, v) + \langle b'(u) \hat z_1, v\rangle = g(v), ~ \tforall v \in \V_1, \\
\label{DP2}
\hat z_1^1 & \text{ solves } a(\hat z_1^1, v) + \langle b'(u_1) \hat z_1^1, v\rangle = g(v), ~ \tforall v \in \V_1. 
\end{align}
Subtracting~\eqref{DP2} from~\eqref{DP1} and rearranging terms, we get
\begin{equation}\label{DP3}
a(\hat z_1 - \hat z_1^1, v) + \langle (b'(u) - b'(u_1)) \hat z_1, v\rangle  =  \langle b'(u_1)(\hat z_1^1 - \hat z_1), v\rangle, ~v \in \V_1.
\end{equation}
In particular, for $v = \hat z_1 - \hat z_1^1 \in \V_1$ equation~\eqref{DP3} yields
\begin{align}\label{DP4}
\nrse{\hat z_1 - \hat z_1^1}^2 &=- \langle (b'(u) - b'(u_1)) \hat z_1, \hat z_1 - \hat z_1^1\rangle  -  \langle b'(u_1)(\hat z_1 - \hat z_1^1), \hat z_1 - \hat z_1^1\rangle
\nonumber \\
& \le - \langle (b'(u) - b'(u_1)) \hat z_1, \hat z_1 - \hat z_1^1\rangle,
\end{align}
where in the last inequality, we used the monotonicity assumption of $b$ in Assumption~\eqref{data_assumptions}.  Now applying the Lipschitz property of $b'$, the \iic{a priori} $L_{\infty}$ bounds on the dual solution $\hat z_1$ (cf. Proposition~\ref{assumptionsonb}),  and both primal and dual $L_2$ lifting in \eqref{DP4}, we obtain
\begin{align}\label{DP5}
\nrse{\hat z_1 - \hat z_1^1}^2 & \le \Theta K_Z \nr{u - u_1}_{L_2} \nr{\hat z_1 - \hat z_1^1}_{L_2}
\nonumber \\
& \le \Theta K_Z C_\ast \hat C_\ast  h_0^{2s}\nrse{u - u_1} \nrse{\hat z_1 - \hat z_1^1},
\end{align}
from which the result follows.
\end{proof}

Now we are in position to show the contraction of GOAFEM in terms of the combined quasi-error which is a linear combination of the energy errors and error estimators in primal and limiting dual problems.  

\begin{theorem}[Contraction of GOAFEM]
\label{quasi_combine_C}
Let the problem data satisfy Assumption~\ref{data_assumptions} and Assumption~\ref{a:apriori}. Let 
\begin{align*}
u & \text{ the solution to}~\eqref{primal_problem}, 
&u_j & \text{ the solution to}~\eqref{discrete_primal}, \\
\hat z & \text{ the solution to}~\eqref{limiting_dual_problem},
& \hat z_j & \text{ the solution to}~\eqref{Ldiscrete_dual_problem}.
\end{align*}
Let $\theta \in (0,1]$, and let $\{\cT_j, \V_j\}_{j \ge 0}$ be the sequence of meshes and finite element spaces produced by GOAFEM.  Let $\gamma_p > 0$ as given by Theorem~\ref{primal_contraction}. Then for sufficient small mesh size $h_{0}$, there exist constants $\gamma > 0, \pi > 0 \an \alpha_D \in (0, 1)$ such that
\begin{align}\label{DC0}
& \nrse{\hat z - \hat z_2}^2  + \gamma \zeta_2^2(\hat z_2) + \pi\nrse{u - u_2}^2 + \pi \gamma_p \eta_2^2(u_2) 
\nonumber \\ 
& \qquad \le  \alpha_D^2\left(  \nrse{\hat z - \hat z_1}^2 + \gamma  \zeta_1^2(\hat z_1) + \pi \nrse{u - u_1}^2 + \pi \gamma_p \eta_1^2(u_1) \right).
\end{align}
\end{theorem}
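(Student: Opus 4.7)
The strategy is to mimic the primal contraction argument (Theorem~\ref{primal_contraction}), but for the limiting dual problem, then superpose it with the already proved primal contraction. The twist is that the D\"orfler property is enforced on the \emph{approximate} dual indicator $\zeta_{1,1}(\hat z_1^1,\cdot)$, while the estimator reduction and contraction mechanism work with the \emph{limiting} dual indicator $\zeta_1(\hat z_1,\cdot)$. Lemmas~\ref{estimatorSwitch} and~\ref{DPErr} are precisely the tools that bridge this gap, at the cost of adding a controlled multiple of the primal error. That cost is then swept up by taking a sufficiently large primal weight $\pi$, using the primal contraction as a reservoir to absorb all extra primal terms.

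\textbf{Step 1 (dual quasi-orthogonality + estimator reduction).} First I would apply Lemma~\ref{dual_quasi} to get $\nrse{\hat z-\hat z_2}^2\le \hat\Lambda_G\nrse{\hat z-\hat z_1}^2-\nrse{\hat z_2-\hat z_1}^2$ and add $\gamma$ times \eqref{Dest_reduc} with $v_1=\hat z_1$, $v_2=\hat z_2$. Pick $\gamma>0$ small enough that $\gamma(1+\delta^{-1})\Lambda_1\eta_0^2\le 1$, so the undesirable $\nrse{\hat z_2-\hat z_1}^2$ term cancels against the quasi-orthogonality remainder. This yields, schematically,
\[
\nrse{\hat z-\hat z_2}^2+\gamma\zeta_2^2(\hat z_2)\le \hat\Lambda_G\nrse{\hat z-\hat z_1}^2+\gamma(1+\delta)\zeta_1^2(\hat z_1)-\gamma(1+\delta)\lambda\,\zeta_1^2(\hat z_1,\cM).
\]

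\textbf{Step 2 (replace the non-computable marked-set term).} The marking only gives D\"orfler for $\zeta_{1,1}(\hat z_1^1,\cM)$, so I would feed the last term through Lemma~\ref{estimatorSwitch}. This introduces (i) a useful fraction $-\beta\theta^2/(1+\delta_4)$ of $\zeta_1^2(\hat z_1)$, (ii) via the splitting parameter $\beta\in(0,1)$ and Lemma~\ref{primal_upper}, a useful fraction $-(1-\beta)\theta^2/((1+\delta_4)C_1^2)$ of $\nrse{\hat z-\hat z_1}^2$, and (iii) two \emph{positive} remainders, one proportional to $h_0^{2(1+s)}\nrse{u-u_1}^2$ and one to $\nrse{\hat z_1-\hat z_1^1}^2$.

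\textbf{Step 3 (convert $\nrse{\hat z_1-\hat z_1^1}^2$ into a primal term).} The Galerkin difference between the limiting and approximate dual problems is bounded via Lemma~\ref{DPErr} by $(\Theta K_Z C_\ast\hat C_\ast h_0^{2s})^2\nrse{u-u_1}^2$, so both remainders in Step 2 collapse into a single scalar multiple of $\nrse{u-u_1}^2$, whose coefficient is proportional to a positive power of $h_0$. Combining with the previous step yields
\[
\nrse{\hat z-\hat z_2}^2+\gamma\zeta_2^2(\hat z_2)\le A_1\nrse{\hat z-\hat z_1}^2+A_2\,\gamma\zeta_1^2(\hat z_1)+K(h_0)\,\nrse{u-u_1}^2,
\]
where $A_1,A_2<1$ (provided $h_0$ is small enough to make $\hat\Lambda_G$ close to $1$) and $K(h_0)\to0$ as $h_0\to0$.

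\textbf{Step 4 (superpose the primal contraction and tune $\pi$).} Now I would add $\pi$ times the primal contraction \eqref{contractionres}, so the left-hand side becomes the combined quasi-error at level $2$. On the right, the $\pi\nrse{u-u_1}^2+\pi\gamma_p\eta_1^2(u_1)$ contributes with contraction factor $\alpha^2<1$ from Theorem~\ref{primal_contraction}, and absorbs $K(h_0)\nrse{u-u_1}^2$ by choosing $\pi$ so large that $\alpha^2+K(h_0)/\pi<1$. Setting $\alpha_D^2:=\max\{A_1,A_2,\alpha^2+K(h_0)/\pi\}$, which can be made strictly less than $1$, completes the proof.

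\textbf{Main obstacle.} The delicate part is the joint choice of the many small parameters $\delta,\delta_1,\delta_2,\delta_A,\delta_B,\beta,\gamma,\pi$ together with the smallness assumption on $h_0$: the $h_0$-smallness must simultaneously (a) keep $\hat\Lambda_G$ close to $1$ so the dual energy contraction factor $A_1$ stays below $1$ even after being multiplied by $1-(1-\beta)\theta^2/((1+\delta_4)C_1^2)$, (b) drive $K(h_0)$ small enough that it is absorbed by $\pi(1-\alpha^2)$, and (c) be compatible with the mesh hypothesis already needed for primal quasi-orthogonality. Checking that there is a nonempty window of parameters satisfying all these constraints at once is the heart of the argument; once this is established, the contraction inequality follows by assembly.
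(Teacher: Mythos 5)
Your proposal is correct and follows essentially the same route as the paper's proof: dual quasi-orthogonality plus $\gamma$ times the limiting-dual estimator reduction with $\gamma$ chosen to cancel $\nrse{\hat z_2-\hat z_1}^2$, then Lemma~\ref{estimatorSwitch} (with the $\beta$-splitting and Lemma~\ref{primal_upper}) and Lemma~\ref{DPErr} to trade the non-computable marked-set term for contracting dual quantities plus an $h_0$-small multiple of $\nrse{u-u_1}^2$, which is finally absorbed by adding $\pi$ times Theorem~\ref{primal_contraction} with $\pi$ large and taking $\alpha_D^2$ as the maximum of the resulting factors. Your identification of the delicate point (the joint parameter/$h_0$ window, resolved in the paper by $\delta_4=\delta=\tfrac12\lambda\theta^2$ and the mesh condition on $\hat\Lambda_G$) matches the paper's treatment.
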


\begin{proof} 
For simplicity, we denote 
$
\eta_0 = \eta_0(\bD, \cT_0) \an \zeta_{k}(\hat z_k) = \zeta_{k}(\hat z_k, \cT_k), ~ k = 1, 2.
$ 
By the estimator reduction for the limiting dual problem~\eqref{Dest_reduc}, for arbitrary $\delta > 0$ we have 
\begin{align}\label{AC1}
\zeta_{2}^2(\hat z_2)  \le  &(1 + \delta) \left\{ \zeta_{1}^2(\hat z_1) - \lambda \zeta_{1}^2(\hat z_1,\cM) \right\}  + ( 1 + \delta^{-1}) \Lambda_1 \eta_0^2 \nrse{\hat z_2 - \hat z_1}^2,
\end{align}
where $\lambda= 1- 2^{-1/d}.$ 
Recall the quasi-orthogonality estimate in the limiting dual problem from Lemma~\ref{dual_quasi}
\begin{align}\label{AC2}
\nrse{\hat z - \hat z_2}^2 \le \hat \Lambda_G \nrse{\hat z - \hat z_1}^2 - \nrse{\hat z_2 - \hat z_1}^2.
\end{align}
Adding~\eqref{AC2} to a positive multiple $\gamma$ (to be determined) of~\eqref{AC1} and applying the results of Lemmas~\ref{estimatorSwitch} and~\ref{DPErr} obtain
\begin{align}\label{AC3}
\nrse{\hat z - \hat z_2}^2 + \gamma \zeta_{2}^2(\hat z_2) & \le A\nrse{\hat z - \hat z_1}^2
+\gamma M \zeta_1^2(\hat z_1) + D \nrse{u - u_1}^2 
\nonumber \\
& \quad + \left( \gamma(1 +\delta^{-1}) \Lambda_1 \eta_0^2 -1 \right)\nrse{\hat z_2 - \hat z_1}^2.
\end{align}
We first set $\gamma \coloneqq (1 + \delta^{-1})^{-1} \Lambda_1^{-1}\eta_0^{-2}$ to eliminate the last term in ~\eqref{AC3}. This yields 
\begin{align}\label{AC4}
\nrse{\hat z - \hat z_2}^2 + \gamma \zeta_{2}^2(\hat z_2) & \le A\nrse{\hat z - \hat z_1}^2
+\gamma M \zeta_1^2(\hat z_1) + D \nrse{u - u_1}^2,
\end{align}
where the coefficients $A \an M$ of~\eqref{AC4} are given by
\begin{align}\label{AC5}
A & = \hat \Lambda_G - (1-\beta) \lambda  \theta^2 \delta  (1 + \delta_4)^{-1} C_1^{-2} \Lambda_1^{-1} \eta_0^{-2},
\\ \label{AC6}
M & = (1 + \delta)(1 -\beta  \lambda  \theta^2 (1 + \delta_4)^{-1} ),
\end{align}
where $\delta_{4}$ satisfies 
$
(1 + \delta_4 )\coloneqq (1 + \delta_1 )(1 + \delta_2 )(1 + \delta_A )(1 + \delta_B )
$ 
as was given in~\eqref{ES8}.

For contraction, we require $A<1$ and $M<1$ for the coefficients defined by~\eqref{AC5} and~\eqref{AC6}, that is, we need to choose a $\beta\in (0, 1)$ such that
\begin{equation}\label{AC8}
\f{\delta}{1 + \delta} \f{1 + \delta_4}{\lambda \theta^2} < \beta < 1 - \f{(\hat \Lambda_G - 1) \Lambda_C}{\delta} \f{1 + \delta_4}{\lambda \theta^2},
\end{equation}
with $\Lambda_C \coloneqq C_1^2 \Lambda_1 \eta_0^2.$
To demonstrate the existence of such a $\beta$, set
\begin{equation}\label{AC9}
\delta_4 = \delta = \frac{1}{2}\lambda \theta^2. 
\end{equation}
Then we require the mesh size $h_{0}$ sufficiently small, such that 
\begin{align}\label{meshCondition2}
\hat \Lambda_G < 1+ \frac{\lambda^{2}\theta^{4}}{2(2+\lambda \theta^{2})\Lambda_{C}},
\end{align}
for the given $\theta \in(0,1)$. Note the conditions \eqref{AC9} and \eqref{meshCondition2} guarantee that the interval in \eqref{AC8} is nonempty, so there exists a $\beta$ such that 
$$\frac{1}{2} < \beta  <  1 - \f{(\hat \Lambda_G - 1) \Lambda_C}{\lambda \theta^2} \left(1+ \f{2}{\lambda \theta^2}\right).$$

It remains to control the last term in \eqref{AC4}. For simplicity, we assume $\delta_1 = \delta_2 = \delta_A = \delta_B \eqqcolon \delta_C$. Then the coefficient $D$ in \eqref{AC4} is given by
\begin{align}\label{AC10}
D & = \delta \lambda  \Theta^2 K_Z^2 C_\ast^2 h_0^{2s} \left( \f{ \theta^2  +(1+ \delta_C)^2} {(1 + \delta_C)^2 \delta_C} \right) \left( \f{h_0^2}{\Lambda_1 \eta_0^2 (1+\delta_{C})} + \hat C_\ast^2 h_0^{2s} \right).
\end{align}
To control the primal error term with the coefficient $D$ as given by~\eqref{AC10}, we add a positive multiple $\pi$ (to be determined) of the primal contraction result~\eqref{contractionres} of Theorem~\ref{primal_contraction} to \eqref{AC5} yielding
\begin{align}\label{AC11}
& \nrse{\hat z - \hat z_2}^2  + \gamma \zeta_2^2(\hat z_2) + \pi\nrse{u - u_2}^2 + \pi \gamma_p \eta_2^2(u_2) 
\nonumber \\ 
&\qquad \le  A \nrse{\hat z - \hat z_1}^2 + \gamma M \zeta_1^2(\hat z_1) + (D + \alpha^2 \pi) \nrse{u - u_1}^2 + \alpha^2 \pi \gamma_P \eta_1^2(u_1).
\end{align}
We choose $\pi$ to ensure $D + \alpha^2 \pi < \pi$, namely,
$\pi > \f{D}{1 - \alpha^2}$,
and set
\begin{equation}\label{AC13}
\alpha_D^2 \coloneqq \max\left\{ A, M, \f{D + \alpha^2 \pi}{\pi}, \alpha^2 \right\} < 1.
\end{equation}
Then the combined quasi-error satisfies the contraction property \eqref{DC0}.
\end{proof}

For simplicity, we denote by 
\[
\bar Q^2(u_j, \hat z_j)  = \nrse{\hat z - \hat z_j}^2  + \gamma \zeta_j^2(\hat z_j) + \pi\nrse{u - u_j}^2 + \pi \gamma_p \eta_j^2(u_j) 
\]
the combined quasi-error in \eqref{DC0}. The following corollary gives the contraction of the error in the goal function, which is determined by the contraction of the combined quasi-error.
\begin{corollary}\label{goalErrorBound}
Let the assumptions in Theorem~\ref{quasi_combine_C} hold. 
Then the error in the goal function is controlled by a constant multiple of the square of the combined quasi-error, i.e., 
\begin{equation}\label{CGo0}
|g(u) - g(u_j)| \le C \bar Q^2_j(u_j, \hat z_j) \le \alpha_D^{2j} C \bar Q_0^2(u_0, \hat z_0).
\end{equation}
\end{corollary}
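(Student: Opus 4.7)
The plan has two independent pieces: (i) bound $|g(u) - g(u_j)|$ by a constant multiple of the combined quasi-error $\bar Q_j^2(u_j,\hat z_j)$; (ii) iterate the contraction in Theorem~\ref{quasi_combine_C} to get the geometric factor $\alpha_D^{2j}$.

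For (ii), observe that Theorem~\ref{quasi_combine_C} gives $\bar Q^2_{k+1}(u_{k+1},\hat z_{k+1}) \le \alpha_D^2\, \bar Q^2_{k}(u_k,\hat z_k)$ for each $k\ge 0$, so induction on $k$ yields $\bar Q_j^2 \le \alpha_D^{2j} \bar Q_0^2$. This part is immediate.

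The substantive part is (i). The plan is to use the limiting dual problem \eqref{limiting_dual_problem} with test function $e_j := u - u_j \in H_0^1(\Omega)$, which gives
\[
g(e_j) = a(\hat z, e_j) + \langle b'(u)\hat z, e_j\rangle.
\]
Next, apply Galerkin orthogonality for the primal problem with test function $\hat z_j \in \V_j$:
\[
a(e_j,\hat z_j) + \langle b(u)-b(u_j),\hat z_j\rangle = 0,
\]
and use the integral Taylor identity $b(u)-b(u_j) = \cB_j e_j$ with $\cB_j$ from \eqref{dual_op} to replace the nonlinear term. Adding and subtracting $a(\hat z_j,e_j) + \langle b'(u)\hat z_j, e_j\rangle$ and regrouping gives the three-term identity
\[
g(e_j) = a(\hat z - \hat z_j, e_j) + \langle b'(u)(\hat z - \hat z_j), e_j\rangle + \langle (b'(u) - \cB_j)\hat z_j, e_j\rangle.
\]
The first two terms are bilinear in the primal and dual errors; by continuity \eqref{continuity}, boundedness of $b'$ (which follows from the Lipschitz property of $b$ in Proposition~\ref{assumptionsonb}), and coercivity \eqref{coercive} to pass from the $H^1$/$L_2$ norms to the energy norm, they are bounded by $C_1\nrse{\hat z-\hat z_j}\,\nrse{e_j}$.

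The key nonlinear term is the third. Using the Lipschitz bound on $b'$ (Proposition~\ref{assumptionsonb}) we have
\[
|b'(u)(x) - \cB_j(x)| \le \int_0^1 \Theta\,(1-\xi)\,|u(x)-u_j(x)|\,d\xi = \tfrac{\Theta}{2}\,|e_j(x)|,
\]
so together with the $L_\infty$ bound on $\hat z_j$ from Proposition~\ref{assumptionsonb} this term is controlled by $C_2 \nr{e_j}_{L_2}^2 \le C_2' \nrse{e_j}^2$. Combining all three estimates and applying Young's inequality yields
\[
|g(u)-g(u_j)| \le C\bigl(\nrse{\hat z - \hat z_j}^2 + \nrse{e_j}^2\bigr) \le C\,\bar Q_j^2(u_j,\hat z_j),
\]
the last step since $\bar Q_j^2$ contains $\nrse{\hat z-\hat z_j}^2$ and $\pi\nrse{u-u_j}^2$ as nonnegative summands. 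Chaining this with part (ii) produces \eqref{CGo0}.

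The main obstacle is the third term: the dependence of the linearized dual operator $\cB_j$ on the unknown exact solution $u$ means the standard linear error representation does not apply verbatim, and we must control the perturbation $b'(u)-\cB_j$ via the Lipschitz property of $b'$ and the a priori $L_\infty$ bound on $\hat z_j$ so that this contribution is quadratic in $\nrse{e_j}$ and thus absorbable into $\bar Q_j^2$.
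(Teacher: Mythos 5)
Your proof is correct and follows essentially the same route as the paper: test the limiting dual problem with $e_j=u-u_j$, use primal Galerkin orthogonality together with the integral Taylor identity $b(u)-b(u_j)=\cB_j e_j$, control the perturbation $b'(u)-\cB_j$ by the Lipschitz constant $\Theta$ and the $L_\infty$ bound on the dual solution so that it is quadratic in $\nrse{e_j}$, and then absorb everything into $\bar Q_j^2$ before iterating the contraction. Your three-term split (with $b'(u)$ on the dual error and the perturbation acting on $\hat z_j$) is an algebraically equivalent rearrangement of the paper's identity, and the paper's extra use of the $L_2$-lifting lemmas only sharpens constants, so no substantive difference remains.
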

\begin{proof}
Choosing the test function $v = u - u_{j}$ in \eqref{limiting_dual_problem}, and by linearity and Galerkin orthogonality for the primal problem, we obtain
\begin{align}\label{CGo1}
g(u) - g(u_j) & = a(\hat z,u) + \langle b'(u) \hat z, u \rangle - a(\hat z, u_j) - \langle b'(u) \hat z, u_j\rangle 
\nonumber \\
& = a(u - u_j, \hat z) + \langle b'(u)(u - u_j), \hat z \rangle 
\nonumber \\
& = a(u - u_j, \hat z) + \langle \cB_j(u - u_j), \hat z \rangle + \langle  (b'(u) - \cB_j)(u - u_j), \hat z
\rangle
\nonumber \\
&  = a(u - u_j, \hat z - \hat z_j) + \langle b(u) - b(u_j), \hat z - \hat z_j \rangle + \langle  (b'(u) - \cB_j)(u - u_j), \hat z \rangle.
\end{align}
The third term in the last line of~\eqref{CGo1} represents the error induced by switching from \eqref{limiting_dual_problem} to \eqref{linearized_dual}.  This term may be bounded in terms of the constants and $L_\infty$ estimates in Proposition~\ref{assumptionsonb} and
\[
\nr{b'(u) - \cB_j}_{L_2} = \left\| { \int_0^1 b'(u) - b' \left(u_j + \xi(u - u_j) \right)d\xi  }\right\|_{L_2}  \le \f \Theta 2\nr{u - u_j}_{L_2},
\]
yielding
\begin{align}\label{CGo2}
\langle  (b'(u) - \cB_j)(u - u_j), \hat z \rangle
& \le K_Z \nr {b'(u) - \cB_j}_{L_2}\nr{u - u_j}_{L_2}
\nonumber \\
& \le \f 1 2 \Theta K_Z  \nr{u - u_j}_{L_2}^2.
\end{align} 
Then by~\eqref{CGo1},~\eqref{CGo2}, the Cauchy-Schwarz inequality and $L_2$-lifting as in Lemmas~\ref{L2lifting} and~\ref{dualL2lifting} 
\begin{align}\label{CGo3}
|g(u) - g(u_j)| & \le \nrse{u - u_j}\nrse{\hat z - \hat z_j} + B\nr{u - u_j}_{L_2}\nr{\hat z - \hat z_j}_{L_2} 
 +\f 1 2 \Theta K_Z  \nr{u - u_j}_{L_2}^2 
\nonumber \\
& \le (1 + BC_\ast \hat C_\ast h_0^{2s}) \nrse{u - u_j}\nrse{\hat z - \hat z_j}  + \f 1 2 \Theta K_Z  C_\ast^2   h_0^{2s} \nrse{u - u_j}^2 
\nonumber \\
& \le \f 1 2 \left( 1+(\Theta K_Z  C_\ast   + B \hat C_\ast )C_\ast h_0^{2s} \right) \nrse{u - u_j}^2 
+  \f 1 2(1 + BC_\ast \hat C_\ast h_0^{2s}) \nrse{\hat z - \hat z_j}^2.  
\end{align}
Therefore the error in the goal function is bounded above by a constant multiple of the square of the combined quasi-error $\bar Q^2(u_j, \hat z_j)$. Thus~\eqref{CGo0} follows by the contraction result in Theorem~\ref{quasi_combine_C}.
\end{proof}

\section{Numerical Experiments}
\label{sec:num}
In this section, we present some numerical experiments implemented 
using FETK~\cite{Hols2001a},
which is a fairly standard set of finite element modeling libraries for 
approximating the solutions to systems of nonlinear elliptic and parabolic 
equations.  We compare three methods: HPZ, the algorithm presented in this paper; MS, the algorithm presented in~\cite{MoSt09}; and the DWR, the dual weighted residual method as described in, for example~\cite{BaRa03,Becker.R;Rannacher.R1996a,EHL02,Giles.M;Suli.E2003,Gratsch.T;Bathe.K2005,EHM01}.  We see HPZ performs with comparable efficiency to MS, with the added benefit of fewer iterations of the adaptive algorithm~\eqref{goafem00} resulting in a shorter overall runtime.  The efficiency of the residual based algorithms HPZ and MS in comparison to DWR varies with the problem structure.  The examples below show cases where each algorithm may outperform the others, but where the performance of all three is comparable with a small change in the problem parameters.

 In our DWR implementation, the finite element space $\V_{k}$ for the primal problem 
 employs linear Lagrange elements as do HPZ and MS for both the primal and dual spaces.  For DWR, the dual finite element space $\V_{k}^2$ uses quadratic Lagrange elements. The elementwise DWR indicator defined as: 
\begin{equation*}
\eta_{k}^D (v,T) \coloneqq \langle R(v), z^2 - I_k z^2\rangle_T  + \f 1 2  \langle J_T(v), z^2 - I_k z^2 \rangle_{\pa T},   \quad v \in \V_{k}
\end{equation*}
estimates the influence of the dual solution on the primal residual.
Here $z^2 \in \V_{k}^2$ is the solution of the approximate dual problem \eqref{approx_dual_problem} and $I_k$ is the interpolator onto $\V_{k}$. Then the DWR error estimator is  the absolute value of the sum of indicators  
\begin{align*}
\eta_k^D =  \left| \sum_{T \in \cT_k} \eta_T^D(u_k,T) \right| \le\sum_{T \in \cT_k}  \left| \eta_T^D(u_k,T)\right|.  
\end{align*}
Both HPZ and MS use the residual based indicators \eqref{eta_cmpct} and \eqref{zeta_cmpct} for primal and approximate dual problems, respectively.

In the adaptive algorithms, we use the D\"orfler marking strategy \eqref{markP}-\eqref{markD} with  parameter $\theta = 0.6.$ For the nonlinear primal problem, at each refinement we use 
a Newton-type iteration to solve the resulting nonlinear system of algebraic
equations, which reduces the nonlinear residual to the tolerance 
$ \|F(u)\|_{L_{2}} \le 10^{-7}$. 
On the initial triangulation, we use a zero initial guess for the Newton 
iteration; then for each subsequent refinement, we interpolate the numerical 
solution from the previous step to the current triangulation and then
use it as the initial guess for the Newton iteration. By doing this, we have a good initial guess for the Newton iteration indicating a quadratic convergence rate of the nonlinear iterations. 

In the following examples, we use the same primal problem given in weak form by
\[
\f 1{1000}\langle  \nabla u, \nabla v \rangle+ \langle 3 u^3, v \rangle = f(v).
\]
The problem data for each problem are defined by $g(u) = \int_\Omega g u$ and $f(v) = \int_\Omega f v$ where $g= g(x,y)$, $f=f(x,y)$  are defined in each example over the domain $\Omega = (0,1)^2$. The initial triangulation is a uniform mesh consisting of 144 elements.  Here we consider problems where the primal and dual data and likewise the primal and dual solutions contain either sharp spikes or shallower bumps where these functions feature rapidly changing gradients.

\begin{example}[Separated primal data] 
\end{example}
This problem features a single Gaussian spike as the goal function $g(x,y)$ and primal data focused on two bumps, one of which overlaps with the spike in $g(x,y)$.  We look at four sets of parameters manipulating both the intensity of the Gaussian and the placement of the second primal bump.  This problem demonstrates the difference between the algorithms when some or all of the primal data has a strong influence on the quantity of interest $g(u)$ and is remote from the spike in the dual solution. 

The goal function is given by $g(x,y) = a \exp(-a [(x - x_d)^2 + (y - y_d)^2])$.  The primal data $f(x,y)$ is chosen so the exact solution $u$ is 
\[
u(x,y) = \sin(2\pi x)\sin(2\pi y) \left\{  \f 1  {2[(x-x_0)^2+(y-y_0)^2] + 10^{-2}) } + \f 1 {2[(x-x_1)^2+(y-y_1)^2]  + 10^{-2} }  \right\}.
\]
The fixed parameters $(x_0,y_0) = (0.7, 0.7)$ and $(x_d, y_d) = (0.7, 0.7)$ fix an interaction between the primal solution and the dual data, and the second primal spike $(x_1,y_1)$ is tested at a near and far location.  The parameter $a$ scales both the maximum intensity of the spike in the goal function as well as the spread of the influence of the dual solution.
\begin{align}
Figure~\ref{fig:HSD_266}  && (x_1,y_1) = (0.6,0.6)  &&  a = 200 \label{HSD_266}, \\
				        && (x_1,y_1) = (0.6,0.6)  &&  a = 400 \label{HSD_466}. \\
Figure~\ref{fig:HSD_233}  && (x_1,y_1) = (0.3,0.3)  &&  a = 200 \label{HSD_233}, \\
				         && (x_1,y_1) = (0.3,0.3)  &&  a = 400 \label{HSD_433}.
\end{align}

\begin{figure}
\includegraphics[width=0.45\textwidth]{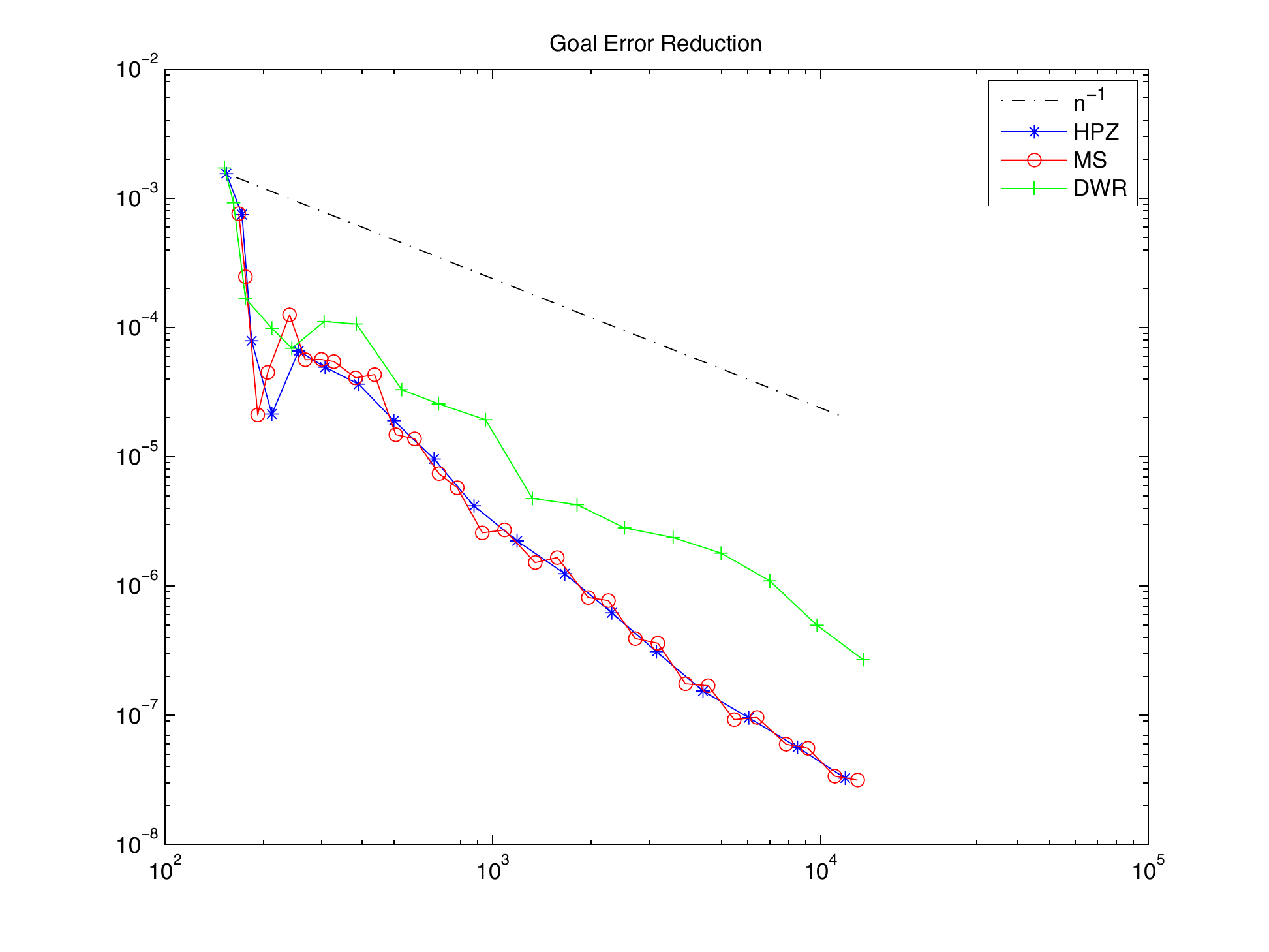}~
\includegraphics[width=0.45\textwidth]{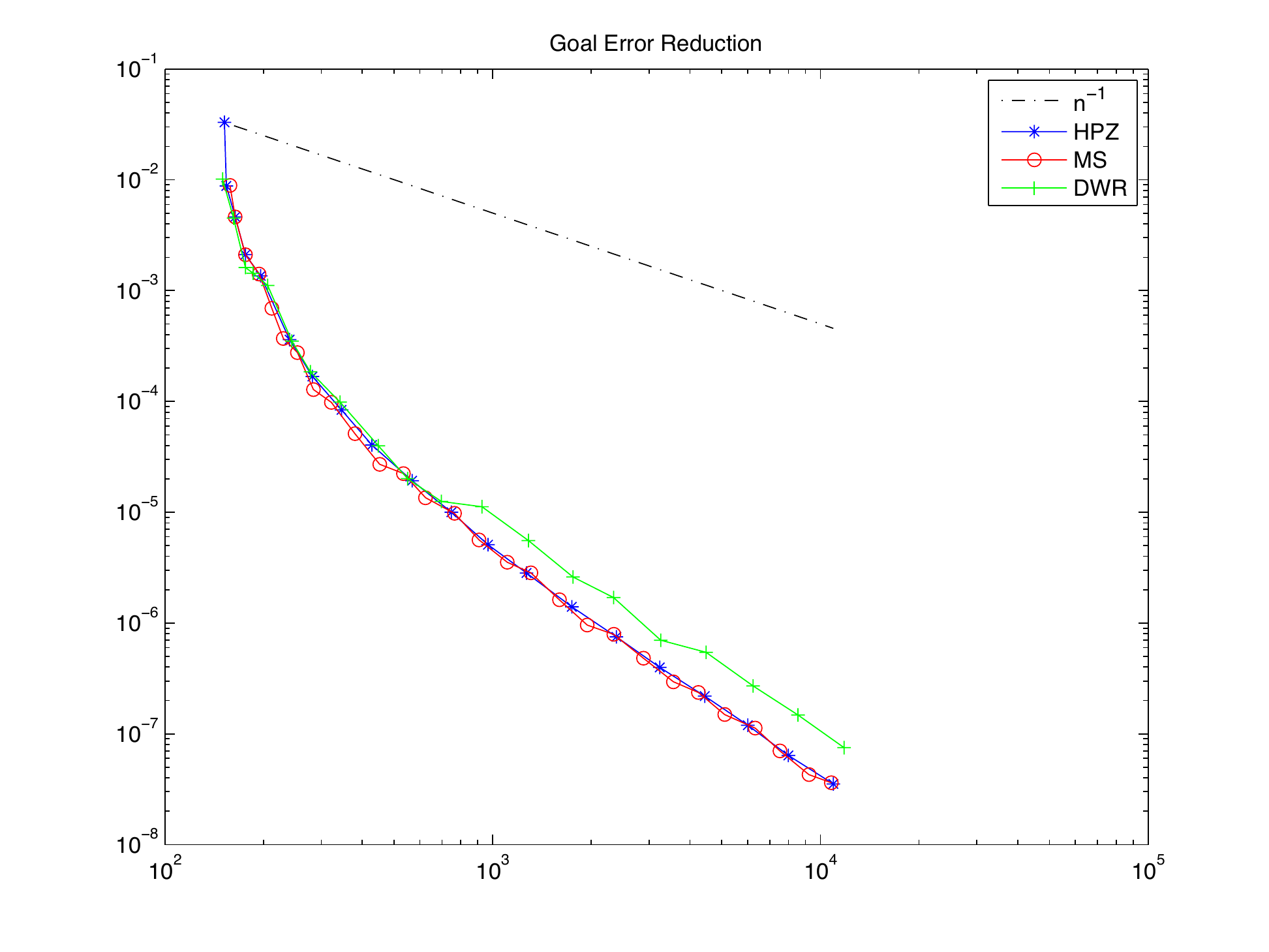}
\caption{Left: goal error after 19 HPZ, 33 MS and 19 DWR iterations for ~\eqref{HSD_266}, 
compared with $n^{-1}$.  Right: goal error after 21 HP, 31 MS and 21 DWR iterations for ~\eqref{HSD_466}, compared with $n^{-1}$. }
\label{fig:HSD_266}
\end{figure}

In Figure~\ref{fig:HSD_266} one spike in the primal data is focused near $(0.7,0.7)$ overlapping with the spike in $g(x,y)$ and the second is near $(0.6,0,6)$, close enough to influence $u$ in the vicinity of $g$, but not entirely overlapping with the spike in the dual solution.  In these cases, the residual based methods outperform DWR when $a= 200$ in parameter set~\eqref{HSD_266} but only slightly when $a=400$ in~\eqref{HSD_466} where the spike in $g(x,y)$ is narrowed and the second primal solution spike has less influence on the quantity of interest $g(u)$.

\begin{figure}[htp]
\includegraphics[width=0.45\textwidth]{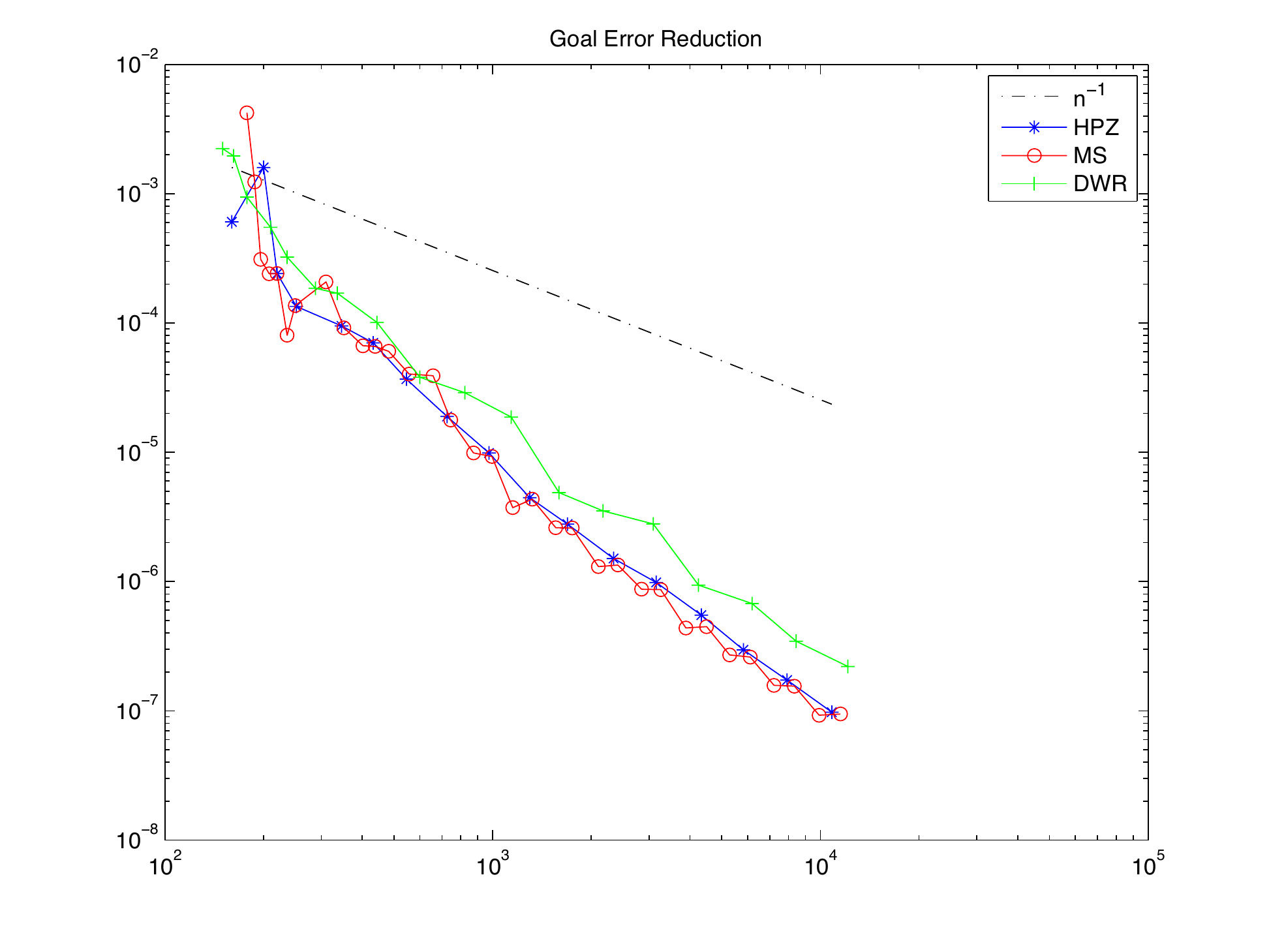}~
\includegraphics[width=0.45\textwidth]{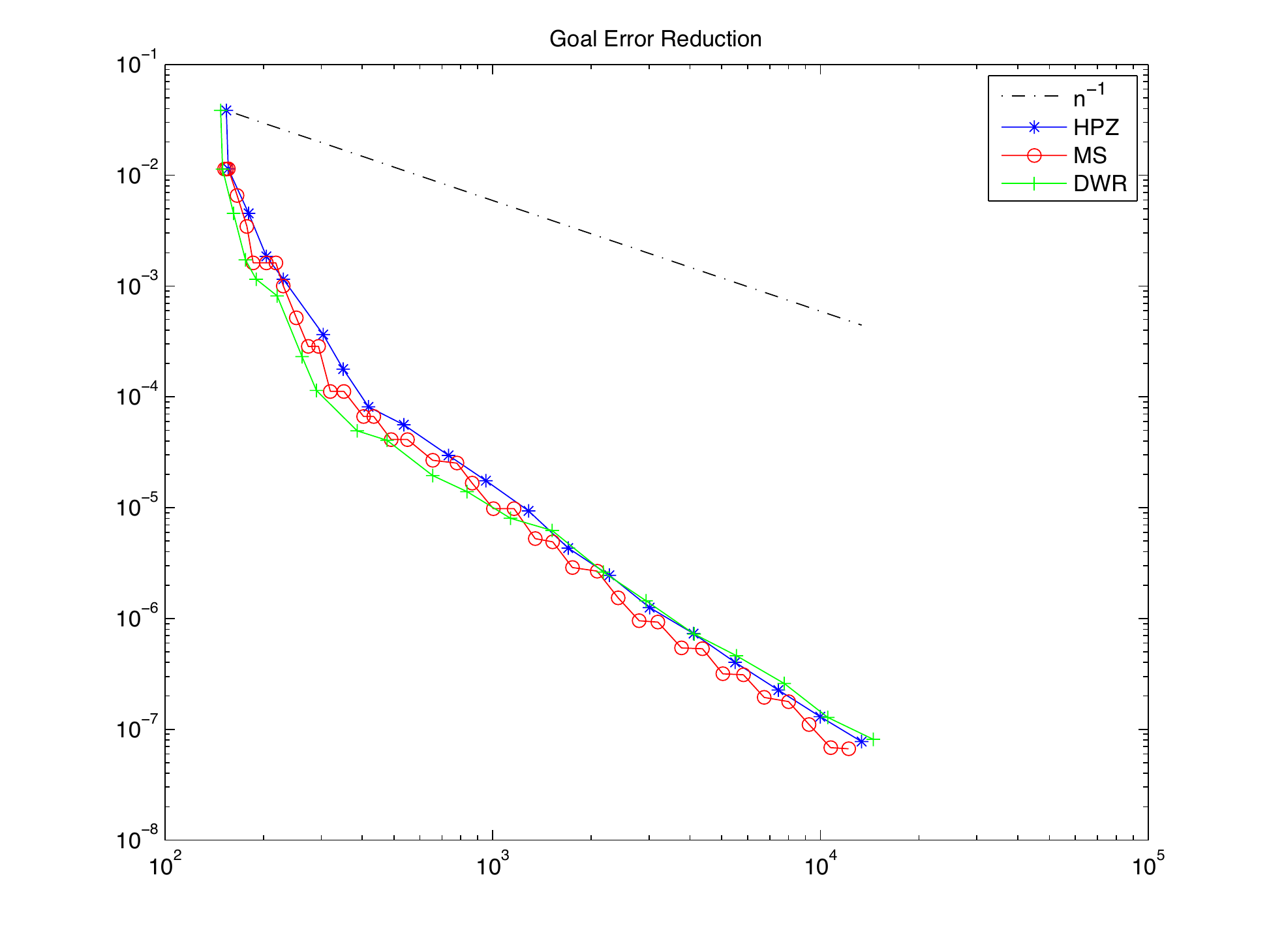}
\caption{Left: goal error after 18 HPZ, 36 MS and 19 DWR iterations for parameter set~\eqref{HSD_233}, compared with $n^{-1}$.  Right: goal error after 21 HPZ, 42 MS and 22 DWR iterations for parameter set~\eqref{HSD_433}, compared with $n^{-1}$. }
\label{fig:HSD_233}
\end{figure}

\begin{figure}[htp]
\includegraphics[width=0.3\textwidth]{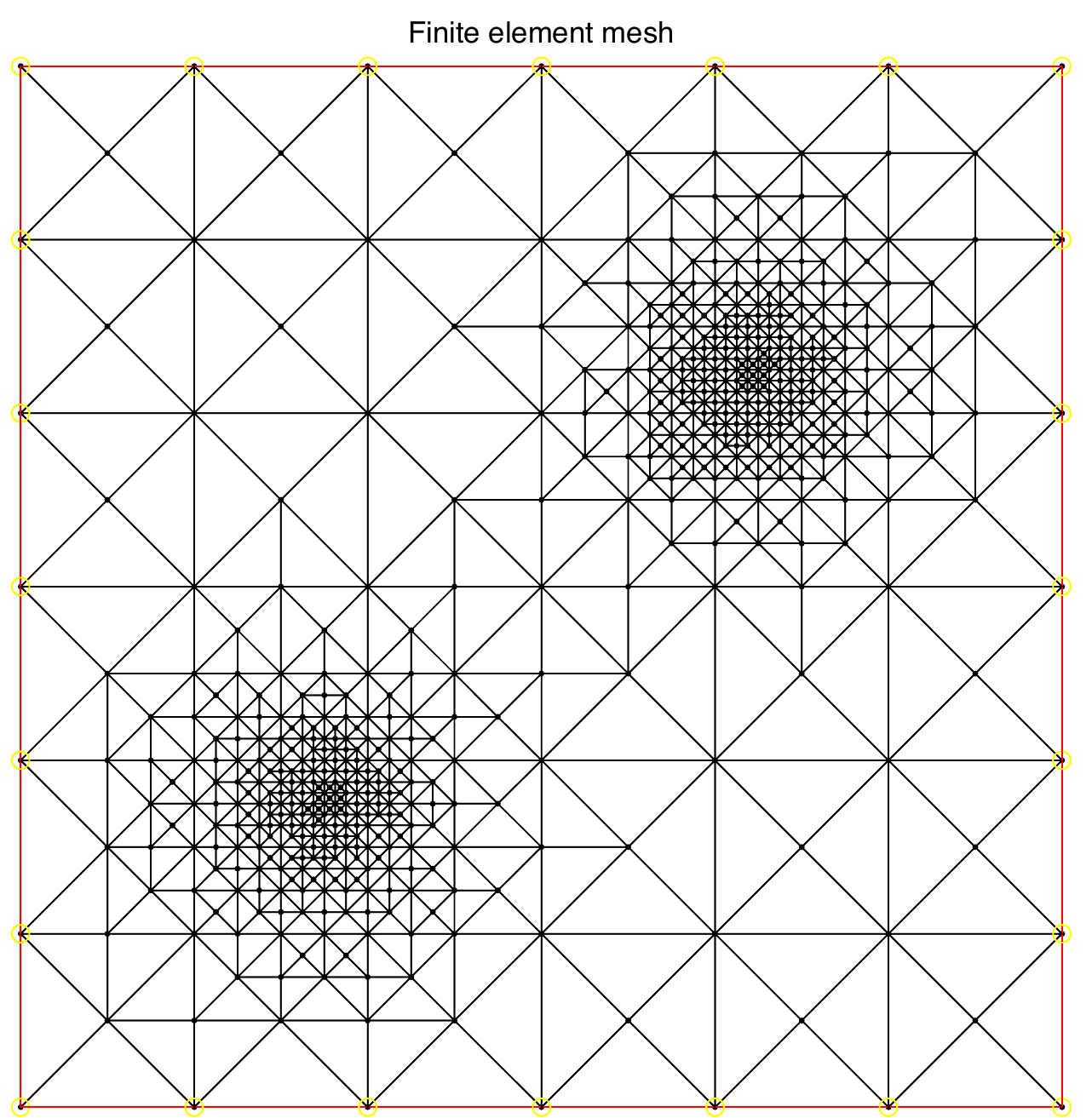}~
\includegraphics[width=0.3\textwidth]{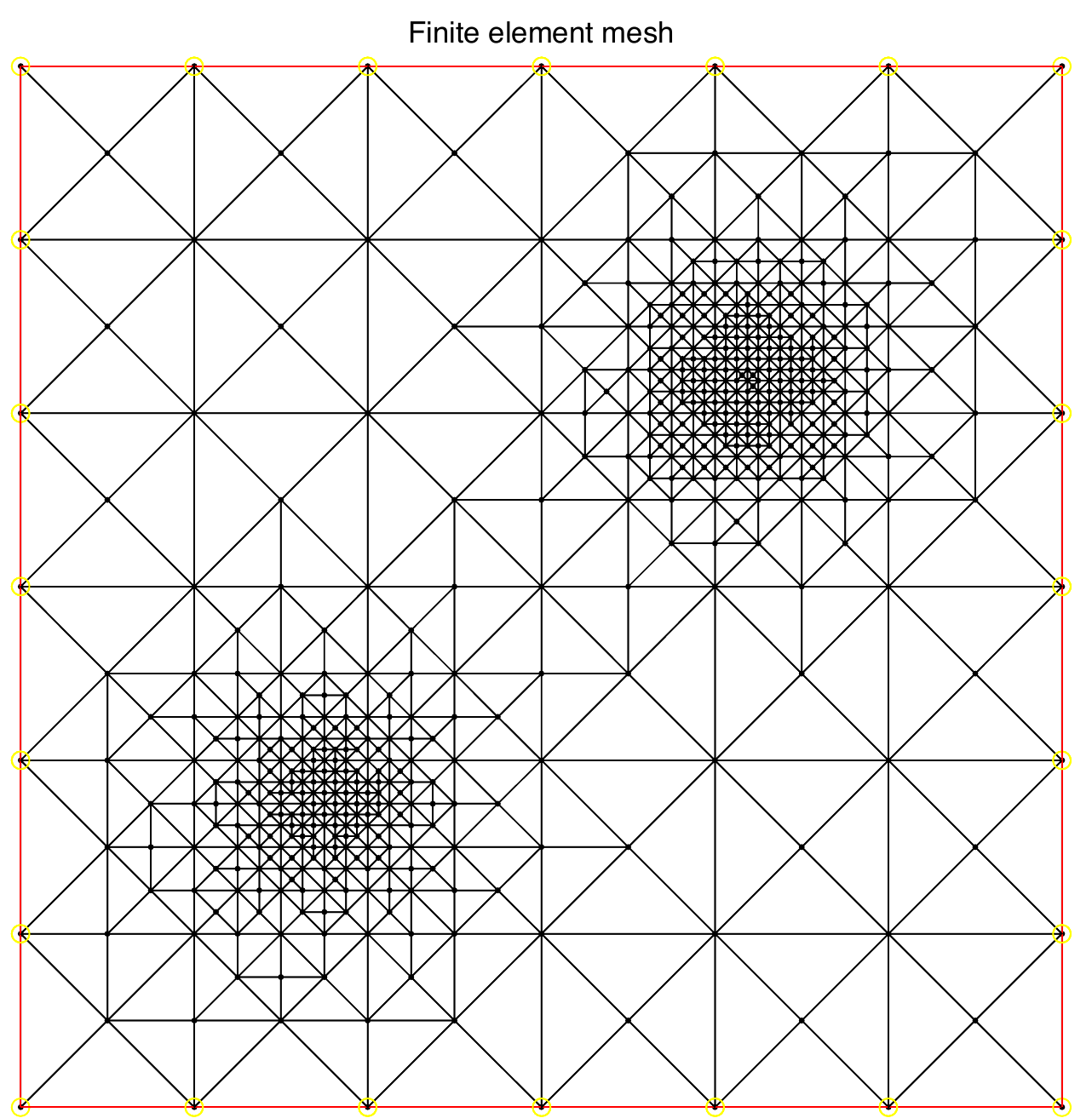}~
\includegraphics[width=0.3\textwidth]{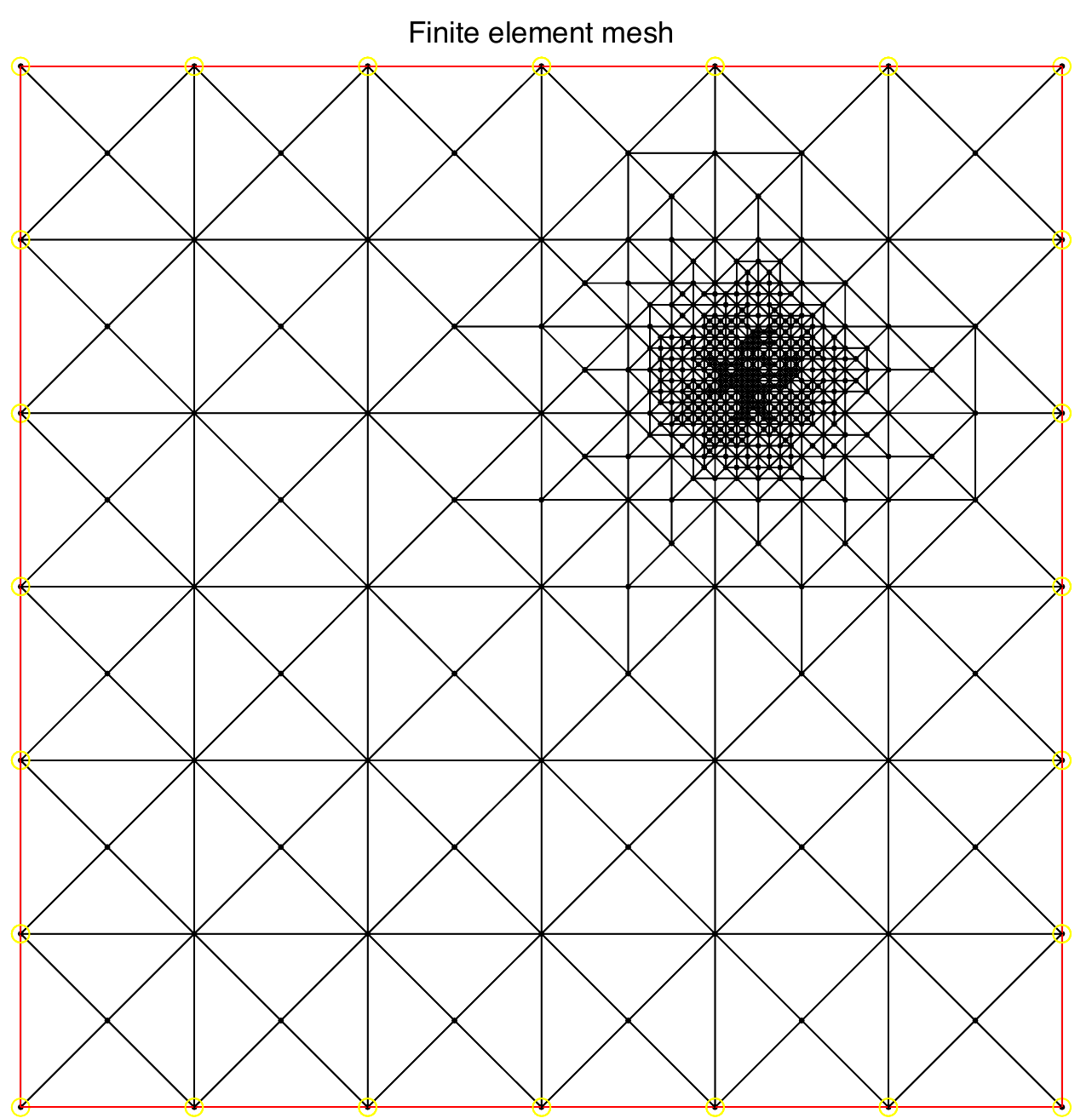}~
\caption{Left: 13 iterations of HPZ (1288 elements). Center: 26 iterations of MS (1162) elements). Right: 14 iterations of DWR (1134 elements) for parameter set~\eqref{HSD_433}.  }
\label{fig:mesh_433}
\end{figure}

In Figure~\ref{fig:HSD_233}, we locate the remote bump of the primal data at $(0.3, 0.3)$ which is far enough away from the spike in the goal function, so that its influence is minimal on $g(u)$. The three methods show a nearly identical error reduction rate when $a = 400$ in parameter set~\eqref{HSD_433}, while the residual based methods show a slight advantage when $a = 200$ in parameter set~\eqref{HSD_233}. 

 Figure~\ref{fig:mesh_433} shows the resulting adaptive meshes produced by difference algorithms for parameter set~\eqref{HSD_433}.  Even where the three methods produce nearly identical error reduction, the adaptive meshes are qualitatively different: DWR focuses on the interaction between the primal data and dual solution, HPZ and MS focus on the primal and dual data; however, HPZ has more concentrated refinement at the center of each region than does MS.

\begin{example}[Goal function with two spikes]
\end{example}
In this example, we consider the problem with a single spike in the primal data and a goal function $g(x,y)$ consisting of a Gaussian average about two separated points.  We keep the far point fixed and move the second point close to the spike in the primal data to investigate which of the algorithms are more effective as we vary the overlap of the refinement sets based on the primal and dual problems.  Compared to parameter sets~\eqref{HSD_266}-\eqref{HSD_433}, DWR generally fares as well or better than the residual based methods for~\eqref{HSD_w1x7}-\eqref{HSD_w2x4}. 

Here, we also manipulate $\omega$, the frequency of the sinusoid in the primal problem. We observe that varying the structure of the problem changes the relative efficiency of the three algorithms.

The goal function is given by
 \begin{align*}
g(x,y) = a \exp(-a((x-x_0)^2 + (y - y_0)^2)) + a\exp(-a((x-x_1)^2 + (y - y_1)^2)),
\end{align*} 
with $a = 400$  and  $(x_0, y_0) = (0.7, 0.7)$.  

The data $f(x,y)$ is chosen so the exact solution $u(x,y)$ is given by
\[
u = \sin(\omega \pi x) \sin(\omega \pi y) \f {1}{ 2[(x - x_p)^2 + (y - y_p)^2] + 10^{-3}}
~\text{ with }~(x_p,y_p) = (0.3, 0.3).
\]

\begin{align}
Figure~\ref{fig:HSD_xy73}  && (x_1,y_1) = (0.7,0.3)   &&  \omega = 1 \label{HSD_w1x7}, \\
					   && (x_1,y_1) = (0.7,0.3)   &&  \omega = 2 \label{HSD_w2x7}. \\
Figure~\ref{fig:HSD_xy53}  && (x_1,y_1) = (0.55,0.3)  &&  \omega = 1 \label{HSD_w1x5}, \\
				           && (x_1,y_1) = (0.55,0.3)  &&  \omega = 2 \label{HSD_w2x5}.\\
Figure~\ref{fig:HSD_xy43} && (x_1,y_1) = (0.4,0.3)    && \omega = 1 \label{HSD_w1x4}, \\
				            && (x_1,y_1) = (0.4,0.3)    &&  \omega = 2 \label{HSD_w2x4}.
\end{align}

Compared to parameter sets~\eqref{HSD_266}-\eqref{HSD_433}, DWR generally fares as well or better than the residual based methods for~\eqref{HSD_w1x7}-\eqref{HSD_w2x4}. 
\begin{figure}[htp]
\includegraphics[width=0.45\textwidth]{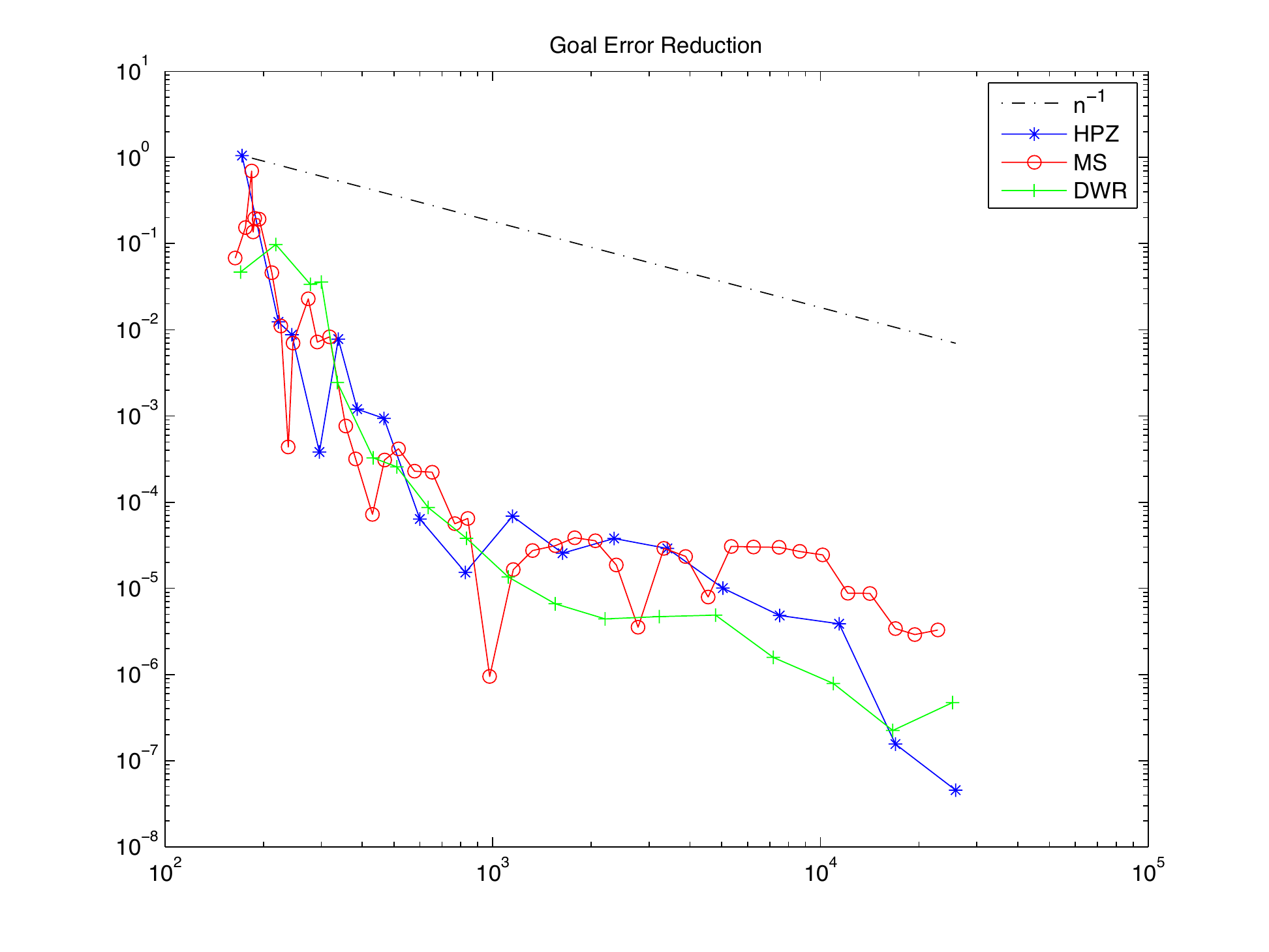}~
\includegraphics[width=0.45\textwidth]{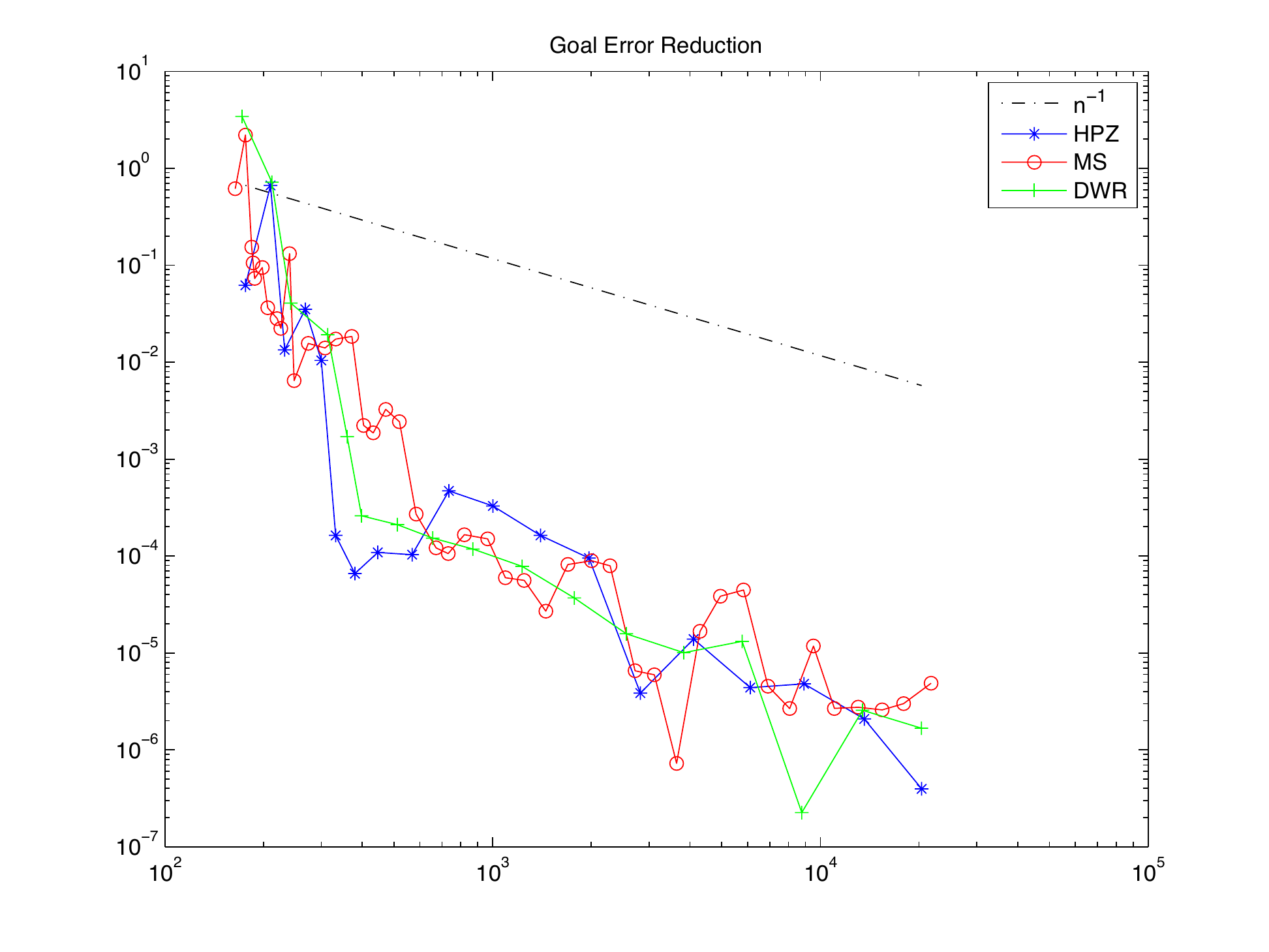}
\caption{Left: goal error after 19 HPZ, 46 MS and 19 DWR iterations for ~\eqref{HSD_w1x7}. 
Right: goal error after 20 HPZ, 47 MS and 18 DWR iterations for ~\eqref{HSD_w2x7}, compared with $n^{-1}$}.
\label{fig:HSD_xy73}
\end{figure}

\begin{figure}[htp]
\includegraphics[width=0.3\textwidth]{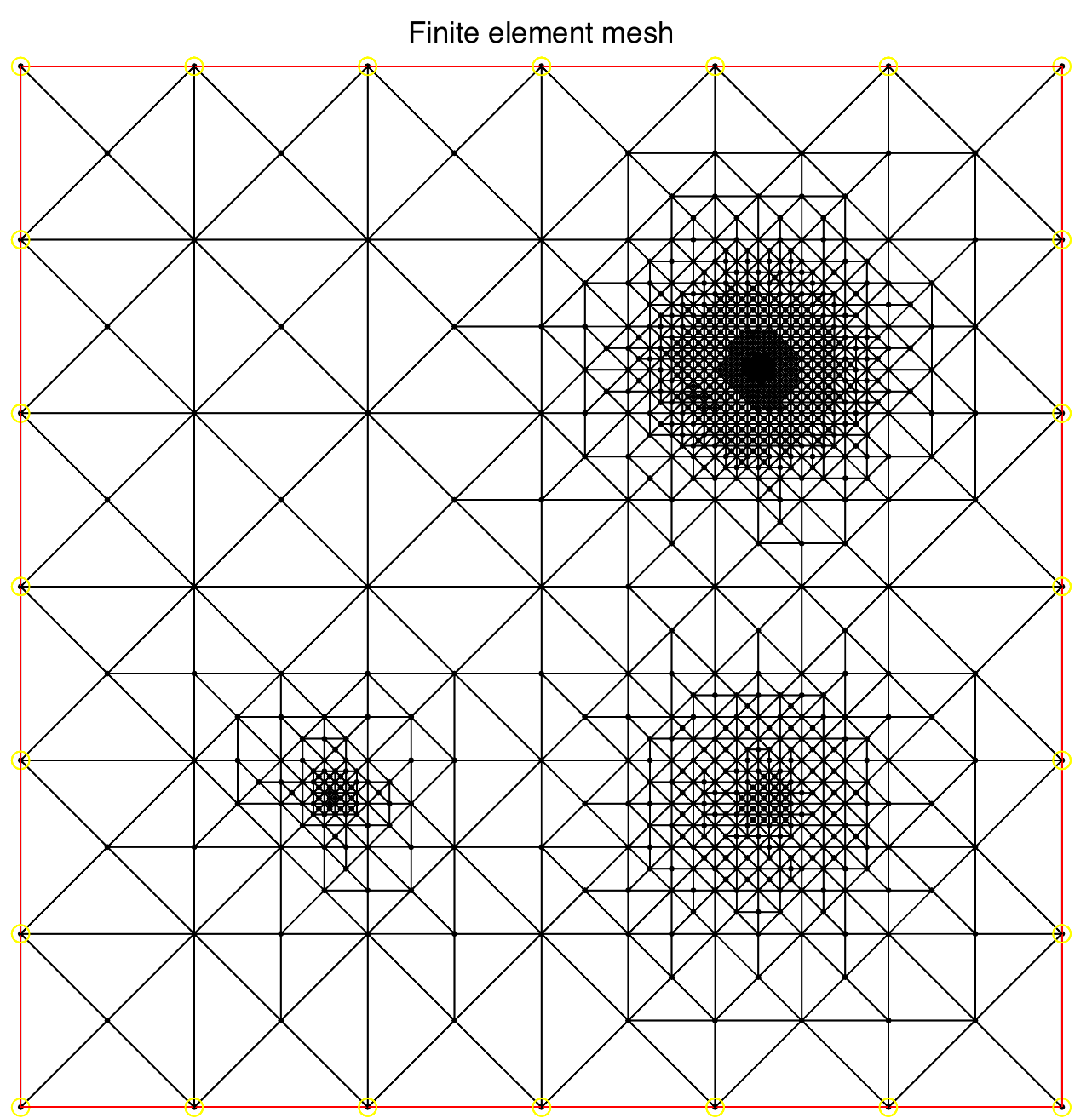}~
\includegraphics[width=0.3\textwidth]{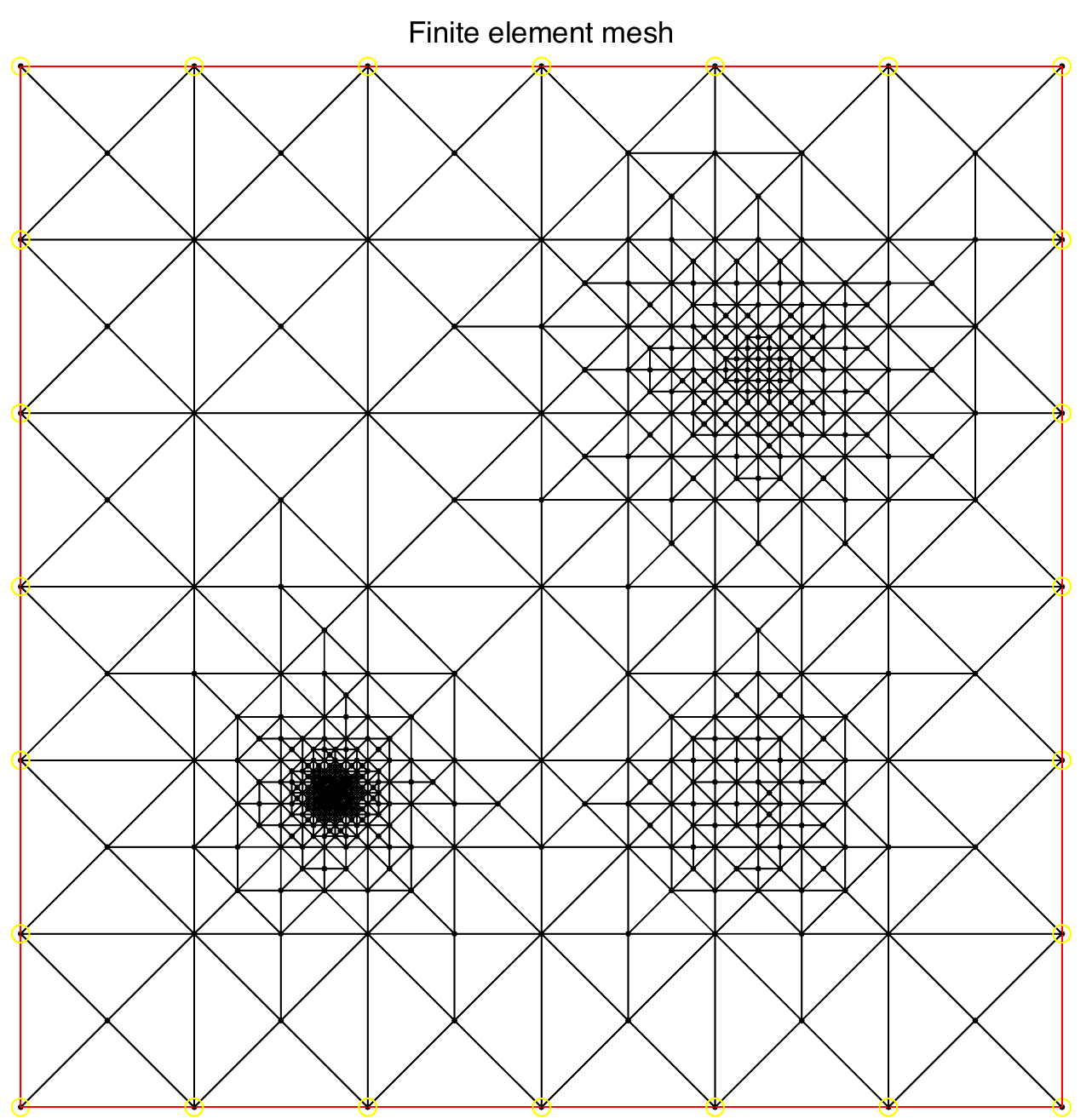}~
\includegraphics[width=0.3\textwidth]{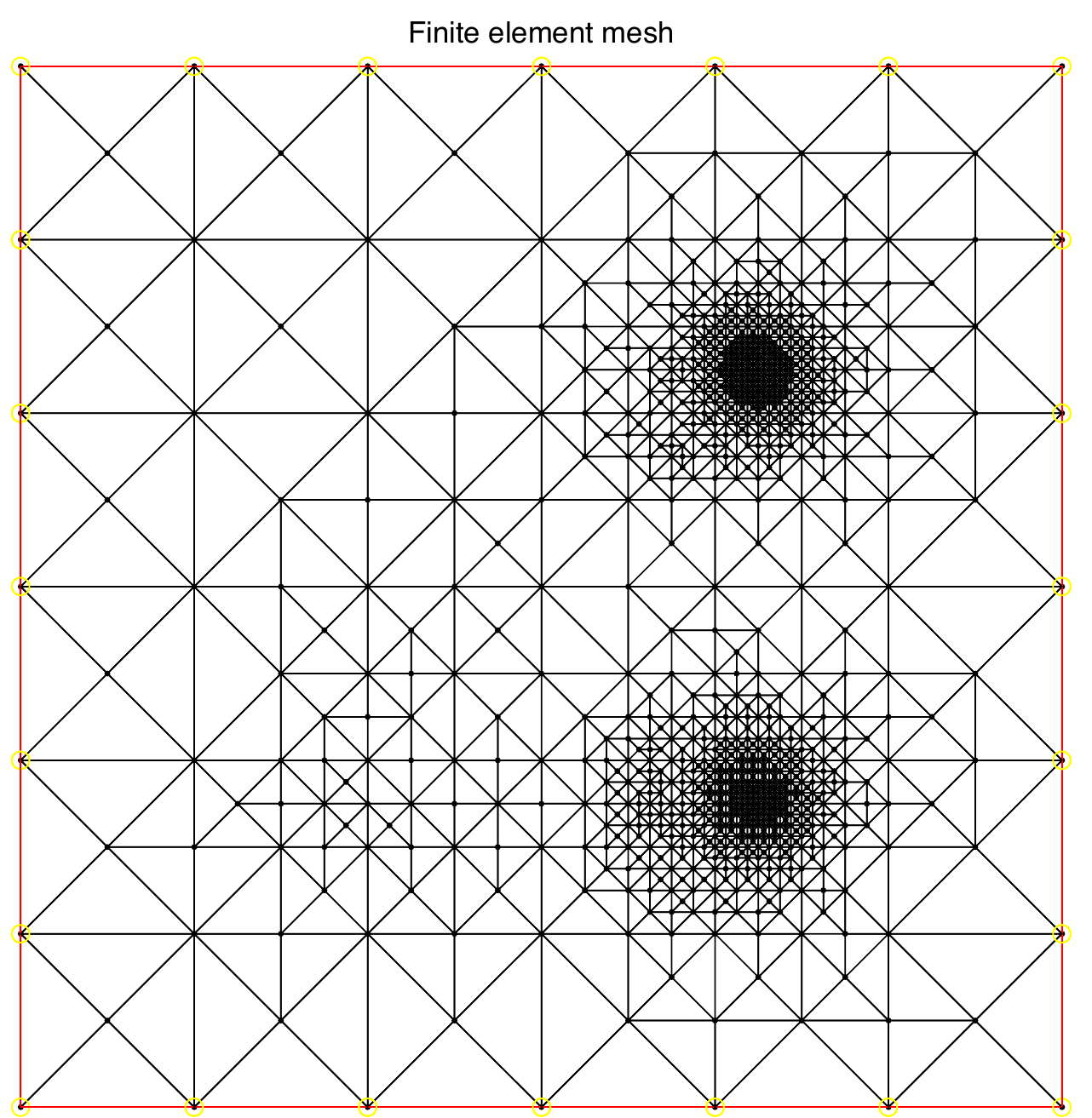}~
\caption{Left: 13 iterations of HPZ (1187 elements). Center: 26 iterations of MS (1156) elements). Right: 14 iterations of DWR (1115 elements) for ~\eqref{HSD_w1x7}.  }
\label{fig:mesh_w1x7}
\end{figure}
For the parameter sets~\eqref{HSD_w1x7} and~\eqref{HSD_w2x7} shown in Figure~\ref{fig:HSD_xy73} both dual spikes are remote from the primal data.  As seen in Figure~\ref{fig:mesh_w1x7} each algorithm displays a distinct trend in its adaptive refinement: HPZ refines for both primal and dual; MS refines for both with a bias towards the primal, with 17 primal refinements and 10 dual refinements in this case; DWR refines with a bias towards the dual data.   When $\omega=1$, HPZ and DWR show similar goal error reduction while MS stalls  at least on these relatively early refinements.  For $\omega = 2$, MS still shows a slight tendency to refine more for the primal than the dual; however the error reductions is generally similar to the other two methods. 

\begin{figure}[htp]
\includegraphics[width=0.45\textwidth]{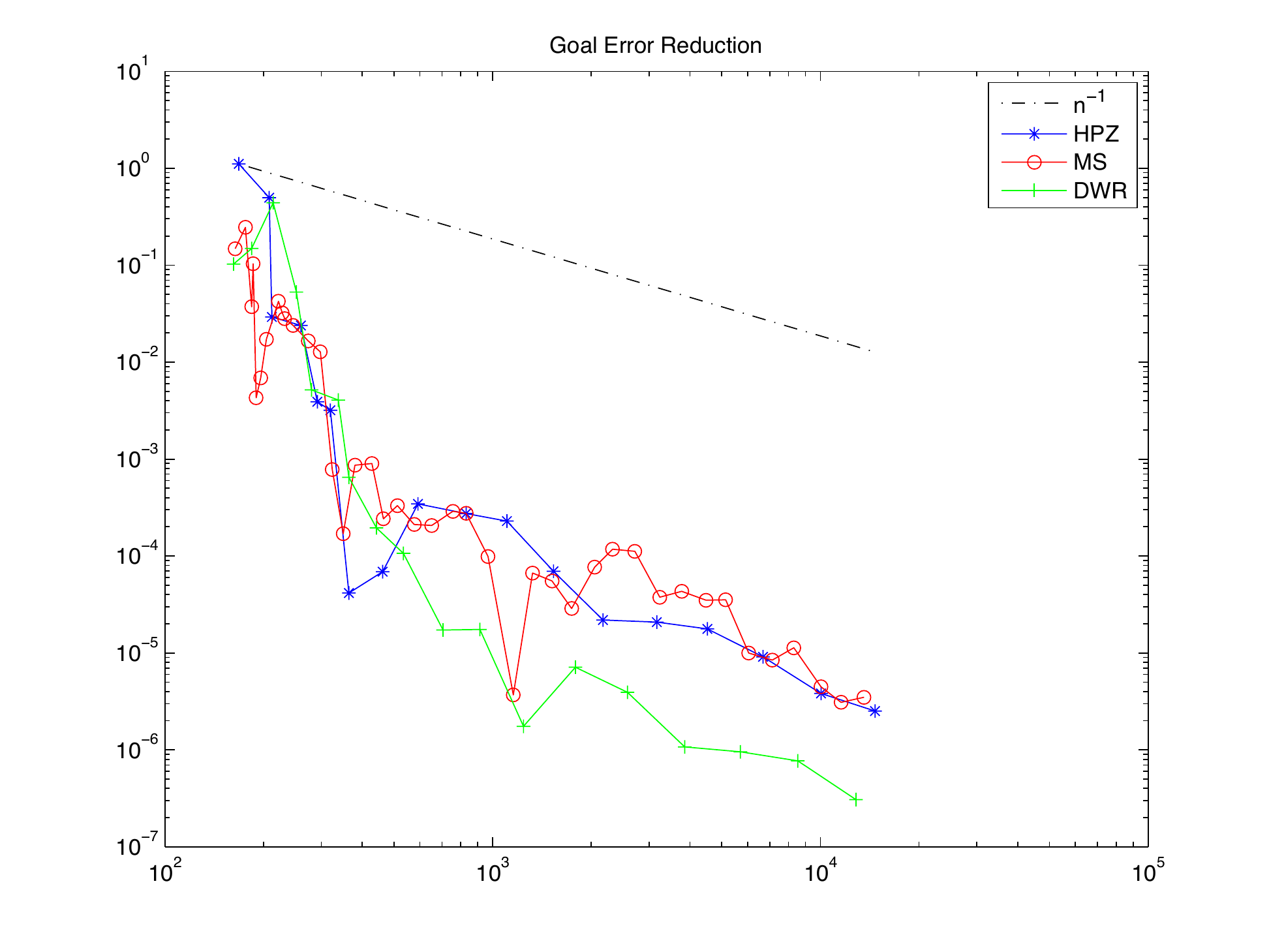}~
\includegraphics[width=0.45\textwidth]{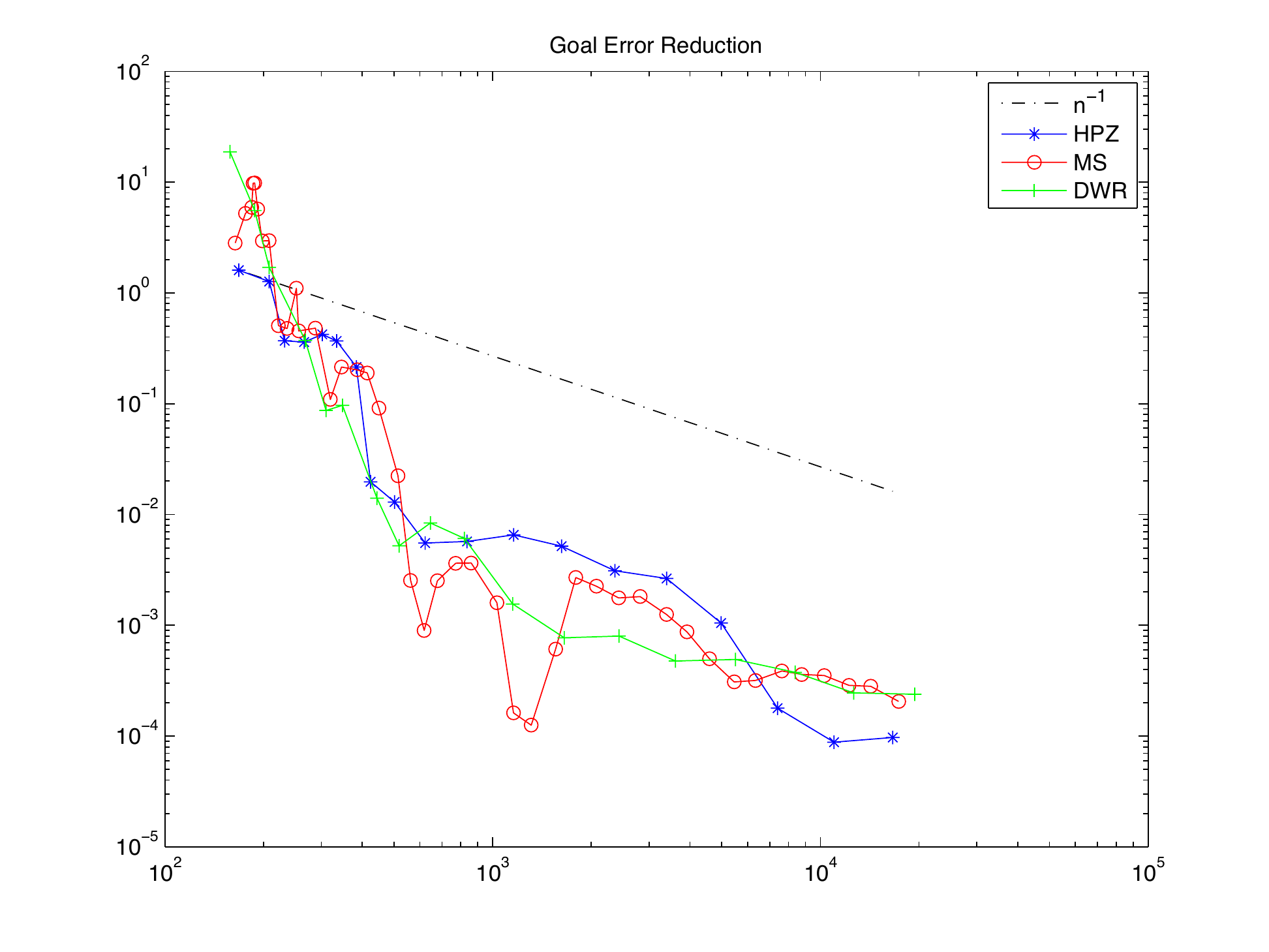}
\caption{Left: goal error after 19 HPZ, 44 MS and 19 DWR iterations for ~\eqref{HSD_w1x5}.  Right: goal error after 20 HPZ, 46 MS and 19 DWR iterations for ~\eqref{HSD_w2x5}, compared with $n^{-1}$. }
\label{fig:HSD_xy53}
\end{figure}

Figure~\ref{fig:HSD_xy53} shows the performance of the algorithms for parameter sets~\eqref{HSD_w1x5} and~\eqref{HSD_w2x5}. Here, the residual methods are similar and both outperformed by DWR in the case $\omega=1$ while all three methods are similar in the case $\omega=2$, with HPZ showing a trend towards slightly greater goal error reduction.

\begin{figure}[htp]
\includegraphics[width=0.45\textwidth]{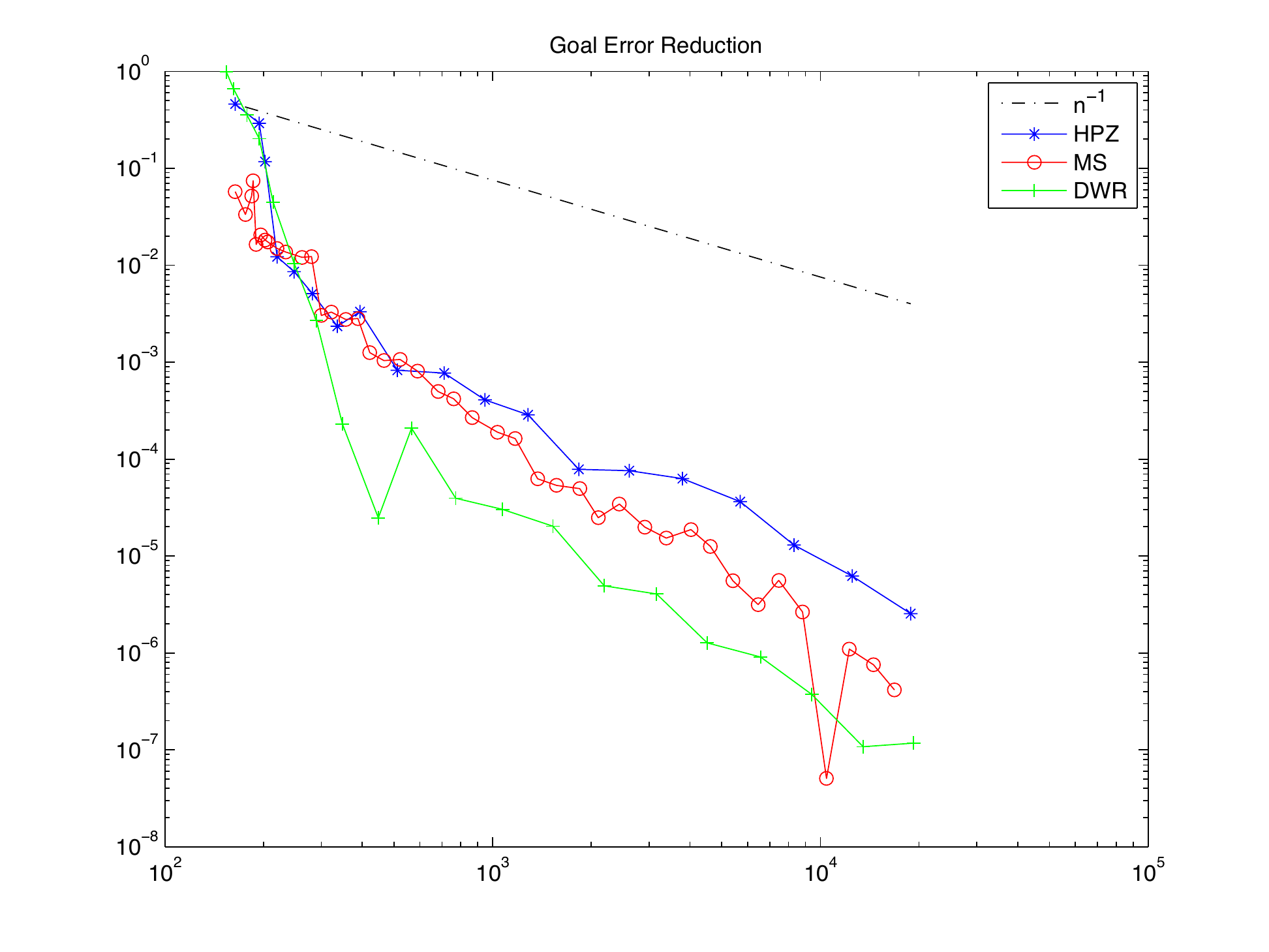}
\includegraphics[width=0.45\textwidth]{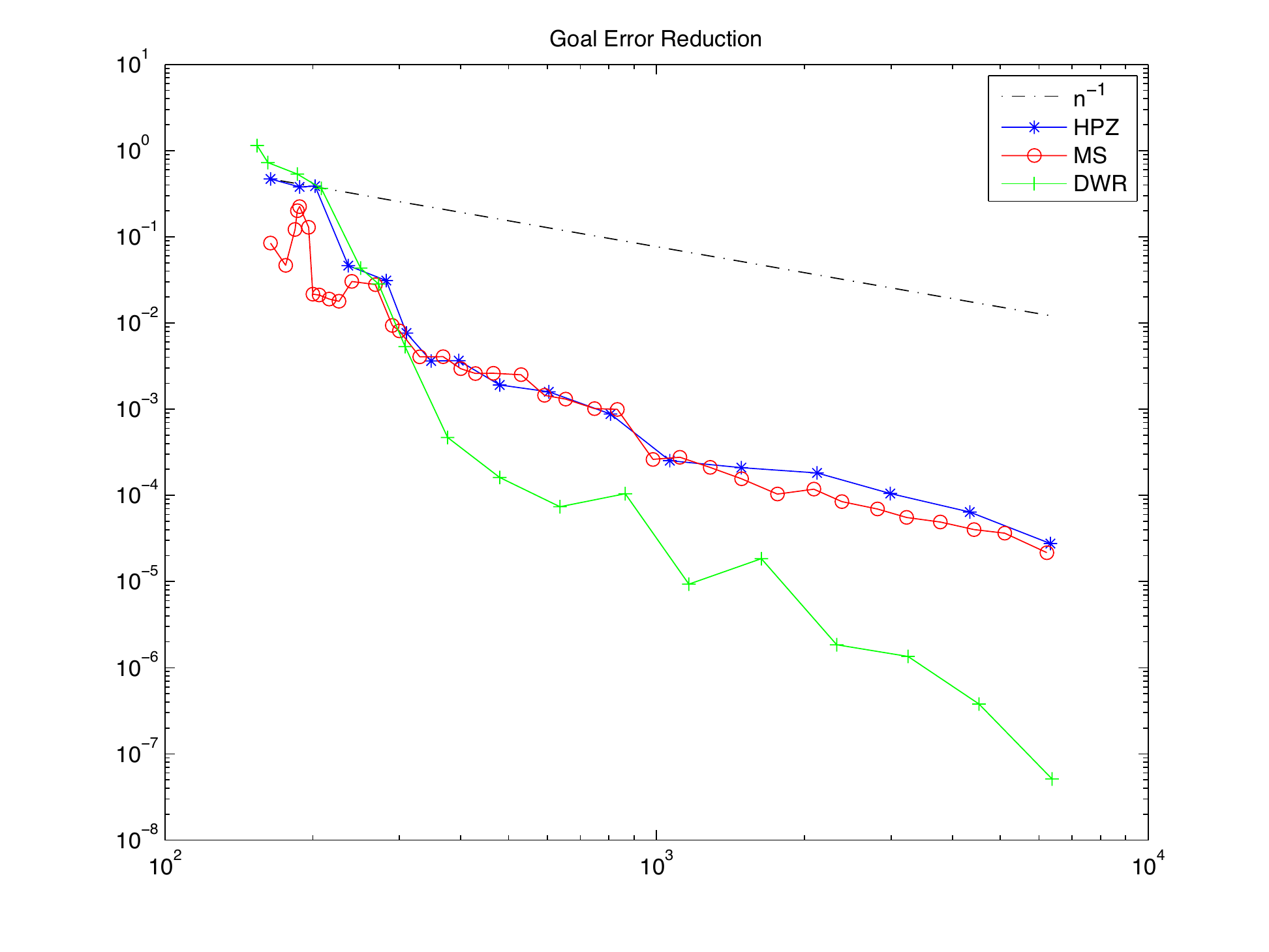}
\caption{Left: goal error after 20 HP, 45 MS and 21 DWR iterations for ~\eqref{HSD_w1x4}. 
Right:  goal error after 18 HP, 40 MS and 18 DWR iterations for ~\eqref{HSD_w2x4}, compared with $n^{-1}$. }
\label{fig:HSD_xy43}
\end{figure}

Finally, Figure~\ref{fig:HSD_xy43} shows the performance of these methods for parameter sets~\eqref{HSD_w1x4} and~\eqref{HSD_w2x4}.  In these examples, DWR outperforms the residual based methods.  In these two cases, the spikes in primal data and dual solution have an isolated area of overlap that coincides with the spikes in the primal solution and dual data, the situation that makes the DWR method the most efficient (\emph{cf.}~\cite{HoPo11a}).  Varying the frequency of the primal data again changes the relative efficiencies of the residual based methods.  In contrast to~\eqref{HSD_w1x7} and~\eqref{HSD_w2x7}, the performance of MS decreases when the frequency $\omega$ increases from $1$ to $2$.

The effectiveness of the DWR method is based on the assumption that $\langle R(u_h),z_h \rangle$ is a good predictor for the error $g(e_h)$.  This appears to work so long as rapidly changing gradients in the dual solution coincide spatially with spikes in $g(x,y)$, and the primal residual $R(u_h)$ captures sufficient information about the primal solution in the vicinity of the influence function $z_h$.  For an example of where the first condition fails, we refer to the linear convection-diffusion problem discussed in~\cite{HoPo11a}, and a demonstration of the second condition is~\eqref{HSD_266}. Under certain conditions, namely a confined region where  the spikes in primal data and dual solution overlap that coincides with the overlap in the spikes in the primal solution and dual data, the DWR methods outperforms the residual based methods.

In many cases, all three methods display similar performance, yet with qualitatively different adaptive mesh refinements.  The relative performances of HPZ and MS do appear to be dependent on the structure of the primal problem, however it is not clear at this stage how to predict which algorithm will yield a better reduction in goal error.  In problems where the HPZ and MS results appear similar, we note that MS takes considerably longer to run as it may require approximately twice as many total iterations of the algorithm where most of the runtime is spent on nonlinear solves. We further emphasize that the results here consider the error vs. mesh cardinality, not total degrees of freedom.  It is of further interest to compare the performance of DWR with the residual based methods using higher order finite elements for the dual and possibly primal problems. Determining classes of problems for which each method is best suited is currently under investigation by the present authors.

\section{Conclusion}

In this article we developed convergence theory for a class of 
goal-oriented adaptive finite element algorithms for second 
order semilinear elliptic equations.
We first introduced several approximate dual problems,
and briefly discussed the target problem class.
We then reviewed some standard facts concerning conforming finite element 
discretization and error-estimate-driven adaptive finite element methods 
(AFEM).  
We included a brief summary of \emph{a priori} estimates for semilinear 
problems, and then described goal-oriented variations of the standard
approach to AFEM (GOAFEM).
Following the recent work of Mommer-Stevenson and Holst-Pollock
for linear problems, we established contraction of GOAFEM for the
primal problem.
We also developed some additional estimates that make it possible
to establish contraction of the combined quasi-error, and showed
convergence in the sense of the quantity of interest.
Some simple numerical experiments confirmed these theoretical predictions and
demonstrated that our method performs comparably to other standard adaptive
goal-oriented strategies, and has the additional advantage of provable convergence 
for problems where the theory has not been developed for the other two methods.
Our analysis was based on the recent contraction frameworks for the 
semilinear problem developed by Holst, Tsogtgerel, and Zhu and 
Bank, Holst, Szypowski and Zhu and those for linear problems 
as in Cascon, Kreuzer, Nochetto and Siebert, and Nochetto, Siebert, and Veeser.
In addressing the goal-oriented problem we based our approach
on that of Mommer and Stevenson for symmetric linear problems and 
Holst and Pollock for nonsymmetric problems.
However, unlike the linear case, we were faced with 
tracking linearized and approximate dual sequences in order to 
establish contraction with respect to the quantity of interest.

In the present paper we assume the primal and approximate dual solutions are solved on the same mesh at each iteration.  The determination of strong convergence results for a method which solves the primal (nonlinear) problem on a coarse mesh and the dual on a fine mesh is the subject of future investigation.
\section*{Acknowledgments}
   \label{sec:ack}

MH was supported in part by NSF Awards~1065972, 1217175, 1262982, 1318480, 
and by AFOSR Award FA9550-12-1-0046.
SP and YZ were supported in part by NSF Awards~1065972 and 1217175.
YZ was also supported in part by NSF DMS 1319110, and in part by University Research Committee Grant No. F119 at Idaho State University, Pocatello, Idaho.

\bibliographystyle{abbrv}
\bibliography{semiA_refs,../../bib/library}

\vspace*{0.5cm}

\end{document}